\documentclass{amsart}

\usepackage{amsmath,amssymb,amsthm,mathtools}
\usepackage{enumitem}
\usepackage{array,longtable}
\usepackage{ifpdf}
\ifpdf
\else

\fi
\usepackage{tikz-cd}
\ifpdf
\usepackage[hidelinks]{hyperref}
\else
\usepackage[dvipdfmx,hidelinks]{hyperref}
\fi
\usepackage{cleveref}

\theoremstyle{plain}
\newtheorem{theorem}{Theorem}[section]
\newtheorem{proposition}[theorem]{Proposition}
\newtheorem{lemma}[theorem]{Lemma}
\newtheorem{corollary}[theorem]{Corollary}
\crefname{theorem}{Theorem}{Theorems}
\crefname{proposition}{Proposition}{Propositions}
\crefname{lemma}{Lemma}{Lemmas}
\crefname{corollary}{Corollary}{Corollaries}

\theoremstyle{definition}
\newtheorem{definition}[theorem]{Definition}
\newtheorem*{definition*}{Definition}
\crefname{definition}{Definition}{Definitions}

\theoremstyle{remark}
\newtheorem{remark}[theorem]{Remark}
\crefname{remark}{Remark}{Remarks}
\crefname{section}{Section}{Sections}
\crefname{equation}{Equation}{Equations}
\crefname{figure}{Figure}{Figures}

\newcommand{\R}{\mathbb{R}}
\newcommand{\D}{\mathcal{D}}
\newcommand{\X}{\mathcal{X}}
\newcommand{\Xp}{\mathcal{X}^{+}}
\newcommand{\M}{\mathcal{M}}
\newcommand{\DM}{\mathcal{DM}}
\renewcommand{\L}{\mathcal{L}}
\newcommand{\TB}{\mathcal{TB}}
\newcommand{\Lup}{\operatorname{Lip}^{+}_{1}(\R)}
\newcommand{\lipplus}{\operatorname{Lip}^{+}_{1}}
\DeclareMathOperator{\lipone}{Lip_1}
\DeclareMathOperator{\dis}{dis}
\DeclareMathOperator{\supp}{supp}
\DeclareMathOperator{\pr}{pr}
\newcommand{\dconc}{d_{\operatorname{conc}}}
\newcommand{\dpr}{d_{\operatorname{P}}}
\newcommand{\kf}{d_{\operatorname{KF}}}
\newcommand{\dinf}[1]{d^{#1}_{\infty}}
\newcommand{\haus}[1]{\left(#1\right)_H}
\newcommand{\Tplan}{\mathcal{T}}

\title[Pyramidal Compactification via Adjoint Transport]{Pyramidal Compactification of Asymmetric Metric Measure Spaces via Adjoint Transport}
\author{Shigeaki Yokota}
\date{}
\keywords{pyramidal compactification, quasi-metric measure space, asymmetric metric measure space, adjoint transport, box distance, pyramid}
\subjclass[2020]{Primary 53C23; Secondary 54E35, 28A33}

\begin{document}

\begin{abstract}
A quasi-metric measure space (qm-space) is a set with a directed distance whose symmetrization is a complete separable metric, together with a Borel probability measure of full support. Motivated by the problem of determining the pyramid limits of beta measures on forward Funk balls, we construct a compact metric space of pyramids of qm-spaces. The associated-pyramid map from the concentration-distance space of qm-spaces into this compact space is a $1$-Lipschitz topological embedding with dense image. As a secondary result, we prove that the box-distance space of qm-spaces is complete and separable.
\end{abstract}

\maketitle

\section{Introduction}
\label{sec:introduction}
To interpret the L\'evy--Milman concentration-of-measure phenomenon in high-dimensional spaces as geometric convergence, Gromov developed the concentration theory of mm-spaces. A triple $(X,d_X,\mu_X)$ is an \emph{mm-space} if $(X,d_X)$ is a complete separable metric space and $\mu_X$ is a Borel probability measure of full support. We write $\X$ for the set of isomorphism classes of mm-spaces. The concentration topology is strictly weaker than the measured Gromov--Hausdorff topology. For a fibration whose fibers form a L\'evy family, it collapses the fibers to points while retaining the base as the limit, and thus allows convergence and limits to be studied for sequences of unbounded dimension \cite[Introduction]{shioya2016mmg}. Gromov introduced the observable distance that measures this convergence and the box distance, whose topology is strictly finer \cite{gromov2007met}. We denote these distances by $\dconc$ and $\Box$, respectively.

For mm-spaces $X$ and $Y$, say that $X$ dominates $Y$ if there exists a measure-preserving $1$-Lipschitz map from $X$ to $Y$. A \emph{pyramid} is a nonempty subset of $\X$ that is downward closed under this relation, closed with respect to $\Box$, and contains a common upper bound for each pair of its elements. We write $\Pi$ for the set of all pyramids. For each $X\in\X$,
\[\mathcal P_{\mathrm{mm}}(X)\coloneqq\{Y\in\X\mid X\text{ dominates }Y\}\]
is a pyramid and defines the associated-pyramid map introduced by Gromov,
\[\iota\colon\X\longrightarrow\Pi,\qquad X\longmapsto\mathcal P_{\mathrm{mm}}(X)\]
\cite{gromov2007met,esaki-kazukawa-mitsuishi2024cones}. Shioya metrized pyramid weak convergence and made $\Pi$ a compact metric space \cite[Definition~4.5 and Theorem~4.6]{shioya2022sugaku}. We denote this metric by $d_\Pi$. This compactification captures measure concentration while retaining both distance and measure, but its use of symmetric distances does not preserve the order of directed endpoints.

The forward Funk beta model is a concrete case in which this limitation matters. For $n\geq1$ and $\beta>0$, let $\mathbb B^n\coloneqq\{x\in\mathbb R^n\mid |x|<1\}$ be the Euclidean unit ball, and write $dx$ for Lebesgue measure on $\mathbb R^n$. Define the beta probability measure on $\mathbb B^n$ by
\[d\mu_{n,\beta}(x)\coloneqq\frac{\Gamma(n/2+\beta)}{\pi^{n/2}\Gamma(\beta)}(1-|x|^2)^{\beta-1}\,dx,\]
and equip the ball with the forward Funk distance $d_{\mathrm F}$. The Klein distance $d_{\mathrm K}$ and the potential $\phi(x)\coloneqq-\frac12\log(1-|x|^2)$ satisfy, for all $x,y\in\mathbb B^n$,
\[d_{\mathrm F}(x,y)=d_{\mathrm K}(x,y)+\phi(y)-\phi(x).\]
Its symmetrization is
\[d_{\mathrm F}^{\mathrm s}(x,y)=d_{\mathrm K}(x,y)+|\phi(y)-\phi(x)|,\]
which replaces the signed endpoint increment by its absolute value. Indeed, for $0<r<1$ and a unit vector $\theta\in\mathbb R^n$,
\[d_{\mathrm F}(0,r\theta)=-\log(1-r)\longrightarrow+\infty,\qquad d_{\mathrm F}(r\theta,0)=\log(1+r)\leq\log2.\]
Thus symmetrization cannot distinguish which endpoint carries the potential. For a positive sequence $(\beta_n)$, the concrete three-phase limits of this model are proved in the sister paper \cite{finsler-poincare-beta-balls}. That paper depends mathematically on the theory of the present paper, while the present paper uses none of its results.

The fixed statements from the two upstream papers on geometric data sets used here, together with their roles and first-use locations, are recorded in \Cref{sec:upstream-dependencies}.

To preserve this endpoint order, a triple $(X,d_X,\mu_X)$ is called a \emph{quasi-metric measure space (qm-space)} if $d_X$ is a directed distance whose symmetrization is defined, for all $x,y\in X$, by
\[d_X^{\mathrm s}(x,y)\coloneqq\max\{d_X(x,y),d_X(y,x)\}\]
and is a complete separable metric, and if $\mu_X$ is a Borel probability measure of full support. We write $\Xp$ for the set of isomorphism classes of qm-spaces. A measure-preserving map $u\colon X\to Y$ between qm-spaces is \emph{$1$-Lipschitz} if
every pair $x,x'\in X$ satisfies
\[d_Y(u(x),u(x'))\leq d_X(x,x').\]
We write $Y\preceq X$ when such a map exists, and set
\[\mathcal P(X)\coloneqq\{Y\in\Xp\mid Y\preceq X\}.\]
Let $X$ be the three-point qm-space on $\{a,b,c\}$ shown in \Cref{fig:three-point-directed-obstruction}. After symmetrization, its lower set contains the two-point mm-space with masses $0.6,0.4$ and distance $1$, whereas $\mathcal P(X)$ excludes the corresponding qm-space because its reverse distance exceeds $0.1$. This is the smallest finite example of the information lost by classical symmetrization.

\begin{figure}[t]
\centering
\begin{tikzpicture}[>=stealth]
\node (a) at (0,1.5) {$a\ (0.6)$};
\node (b) at (-1.5,0) {$b\ (0.2)$};
\node (c) at (1.5,0) {$c\ (0.2)$};
\draw[->] (b) -- node[left] {$0.1$} (a);
\draw[->] (c) -- node[right] {$0.1$} (a);
\end{tikzpicture}
\caption{The three-point qm-space $X$. The displayed arrows give $d_X(b,a)=d_X(c,a)=0.1$. Every other nonzero directed distance is $2$.}
\label{fig:three-point-directed-obstruction}
\end{figure}

On $\Xp$, we use $\dconc$ for the observable distance determined by one-sided observables and $\Box$ for the directed version of the box distance. By \cite[Proposition~5.8]{gds1}, one has $\dconc\leq\Box$. A subset of $\Xp$ is a pyramid if it is nonempty, downward closed under $\preceq$, closed with respect to $\Box$, and contains a common upper bound for each pair of its elements. We write $\Pi^+$ for the set of all such pyramids. Does the same compactification hold while preserving the order of directed endpoints?

\begin{theorem}[Pyramidal compactification of qm-spaces]
\label{thm:intro-qm-pyramidal-compactification}
For every $X\in\Xp$, the lower set $\mathcal P(X)$ belongs to $\Pi^+$. There exists a metric $d_{\Pi}^{\M}$ on $\Pi^+$ with the following properties.
\begin{enumerate}[label=\textup{(\roman*)}]
\item The metric space $(\Pi^+,d_{\Pi}^{\M})$ is compact.
\item For all $X,Y\in\Xp$,
\[d_{\Pi}^{\M}(\mathcal P(X),\mathcal P(Y))\leq\dconc(X,Y),\]
and $X\mapsto\mathcal P(X)$ is a topological embedding of $(\Xp,\dconc)$ into $(\Pi^+,d_{\Pi}^{\M})$.
\item The image $\{\mathcal P(X)\mid X\in\Xp\}$ is dense in $(\Pi^+,d_{\Pi}^{\M})$.
\end{enumerate}
\end{theorem}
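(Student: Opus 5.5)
We will follow Shioya's construction of $(\Pi,d_\Pi)$ as closely as possible, transporting each step to the directed setting and using the facts on $\Box$ and $\dconc$ for qm-spaces recorded in \Cref{sec:upstream-dependencies}.

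\emph{Step 1: $\mathcal P(X)\in\Pi^+$.} Nonemptiness, downward closedness and directedness are immediate, since $X$ is a common upper bound of any two elements. For $\Box$-closedness, take $Y_n\preceq X$ with $Y_n\to Y$ in $\Box$ and maps $u_n\colon X\to Y_n$. We realize everything by parameters on $[0,1)$, pass to a subsequence so that the push-forward couplings converge weakly, and show that the limit coupling is supported on the graph of a $1$-Lipschitz map. The directed inequality $d_Y(u(x),u(x'))\le d_X(x,x')$ passes to the limit because it is a closed condition in the symmetrized topology.

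\emph{Step 2: the metric $d_\Pi^{\M}$.} For $N\in\mathbb N$, let $\M(N)$ be the space of Borel probability measures on $\R^N$. We attach to each qm-space its image measures under $1$-Lipschitz maps to $(\R^N,\text{directed }\ell_\infty)$, where the directed distance is $\max_i (y_i-x_i)^+$, which is the adjoint of one-sided observables. For a pyramid $\mathcal P$, we put
\[\mathcal P_N\coloneqq\overline{\{\Phi_*\mu_X\mid X\in\mathcal P,\ \Phi\colon X\to\R^N\ 1\text{-Lipschitz}\}}\subset\M(N).\]
Each $\mathcal P_N$ is weakly closed. We then define
\[d_\Pi^{\M}(\mathcal P,\mathcal P')\coloneqq\sum_N 2^{-N}\rho_N(\mathcal P_N\cap\M(N,R_N),\ \mathcal P'_N\cap\M(N,R_N)),\]
where $\rho_N$ is the Hausdorff distance for the Prokhorov metric on measures concentrated in a large box, truncated at $1$. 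Here $\M(N,R)$ denotes measures supported in $[-R,R]^N$, and $R_N\to\infty$ is chosen as in Shioya.

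Positivity requires that a pyramid be recovered from its family $(\mathcal P_N)_N$. This is a directed Lipschitz-representation lemma: every qm-space is the $\Box$-limit of the images of the measure under the maps $x\mapsto(d_X(x_i,x))_i$, which are $1$-Lipschitz into the directed $\ell_\infty$ by the triangle inequality. Combining this with $\Box$-closedness shows that $Y\in\mathcal P$ if and only if every finite-dimensional approximation of $Y$ lies in the corresponding $\mathcal P_N$.

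\emph{Step 3: compactness.} The set of closed subsets of the compact set $\M(N,R_N)$ is Hausdorff-compact. By a diagonal argument, any sequence of pyramids therefore has a subsequence along which every $\mathcal P^n_N$ converges. We then define the limit pyramid as the set of $Y$ all of whose finite approximations lie in the limit sets. The main obstacle of the whole proof is to show that this set is directed and $\Box$-closed, and that it induces exactly the limit sets. We intend to handle this with compatibility under coordinate projections $\R^{N+1}\to\R^N$, which are $1$-Lipschitz for the directed $\ell_\infty$. Directedness will come from gluing: measures in $\mathcal P_N$ and $\mathcal P_M$ arise from a common upper bound, so their product-type joint measure lies in $\mathcal P_{N+M}$, and this property survives limits.

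\emph{Step 4: the embedding and density.} For the Lipschitz bound, if $\dconc(X,Y)<\varepsilon$, then each $1$-Lipschitz $\Phi\colon X\to\R^N$ can be matched, up to $\varepsilon$ in measure, with some $\Psi\colon Y\to\R^N$. This matching follows from the definition of $\dconc$ through one-sided observables applied coordinatewise. It gives $\rho_N\le\varepsilon$ for every $N$, hence $d_\Pi^{\M}(\mathcal P(X),\mathcal P(Y))\le\varepsilon$.

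For the converse continuity, we argue that $\mathcal P(X_n)\to\mathcal P(X)$ forces $\dconc(X_n,X)\to0$. The approach is to compare the one-sided observables of $X_n$ and $X$ through $1$-Lipschitz maps to $\R^N$ with $N$ large, using the representation lemma and a uniformity estimate analogous to Ozawa–Shioya.

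For density, given $\mathcal P$, we choose an increasing sequence $Y_1\preceq Y_2\preceq\cdots$ in $\mathcal P$, built by directedness from a countable $\Box$-dense subset of $\mathcal P$ (which exists by separability). We then show that $\mathcal P(Y_k)\to\mathcal P$ because $\mathcal P(Y_k)_N$ increases to $\mathcal P_N$.
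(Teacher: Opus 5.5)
Your route differs from the paper's. You rebuild Shioya's measurement-set construction directly on qm-spaces, whereas the paper transports the pyramid theory of $\Lup\circ\D$ along $\operatorname{Rep}^{+}\dashv\operatorname{Rec}^{+}$. Some of your ingredients are sound: the directed Kuratowski map $x\mapsto(d_X(x_i,x))_i$ into $\R^N$ with $\max_i(y_i-x_i)^+$ is a good device, and your density argument is essentially \Cref{thm:associated-pyramid-sequential-density}.

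However, the $1$-Lipschitz bound in Step~4 is false for the metric you define. Take a coupling $\pi$ with $\haus{\kf^\pi}(\lipplus(X)\circ\pr_1,\lipplus(Y)\circ\pr_2)<\varepsilon$. The definition of $\dconc$ only matches each coordinate $f_i$ of $\Phi$ separately with some $g_i\in\lipplus(Y)$ such that $\pi(|f_i\circ\pr_1-g_i\circ\pr_2|>\varepsilon)\le\varepsilon$. The $N$ exceptional sets are unrelated, so the union bound gives only $\pi(\max_i|f_i\circ\pr_1-g_i\circ\pr_2|>\varepsilon)\le N\varepsilon$, that is, $\rho_N\le N\varepsilon$. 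This is exactly \Cref{lem:ordered-measurement-stability}.

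The loss is real. Take the one-point space $\ast$ and $I(S^n)$, and let $\mu$ be the normalized volume. L\'evy's isoperimetric inequality gives $\dconc(\ast,I(S^n))=\kf^{\mu}(\arcsin x_1,0)$. For every $c\in\R^2$, the set where $\max_{i=1,2}|\arcsin x_i-c_i|>\dconc(\ast,I(S^n))$ is a union of two one-coordinate deviation sets. Each has measure at least $\dconc(\ast,I(S^n))$, and neither is contained in the other. A compactness argument in $c$ then gives $\rho_2>\dconc(\ast,I(S^n))$.

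So your claim that $\rho_N\le\varepsilon$ for every $N$ fails. With weights $2^{-N}$, the coordinatewise argument only yields $\sum_N2^{-N}\min\{1,N\dconc(X,Y)\}\le2\dconc(X,Y)$. The repair is the paper's weighting $1/(N\cdot2^N)$ in \Cref{eq:measurement-pyramid-metric}, which absorbs the factor $N$.

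Second, the two hardest ingredients are announced rather than proved.

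The first is the reverse continuity in (ii): $d_\Pi^{\M}(\mathcal P(X_n),\mathcal P(X))\to0$ must force $\dconc(X_n,X)\to0$, and together with \Cref{thm:pullback-metrics} this also gives injectivity. It is the directed analogue of the theorem that convergence of all finite measurements characterizes concentration. A ``uniformity estimate analogous to Ozawa--Shioya'' does not establish it. For each $n$ you must produce a single coupling under which every one-sided observable of $X_n$ is Ky Fan close to one of $X$, and conversely. Measurement convergence controls only finitely many observables at a time. The paper does not reprove this either. It obtains the equivalence from \Cref{eq:qm-associated-pyramid-pullback}, from the preservation and reflection of weak convergence in \Cref{thm:qm-pyramid-transport}, and from the embedding of $\Lup\circ\D$ into its pyramid space \cite[Proposition~7.3]{gds2}.

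The second is directedness of the limit in Step~3. Your gluing produces common upper bounds only of finite-dimensional approximations $Y^{(k)},Z^{(k)}$ of $Y,Z$. To obtain a common upper bound of $Y$ and $Z$ themselves, you still need a box-convergent subsequence of the glued spaces. One way is the following:
\begin{enumerate}[label=\textup{(\arabic*)}]
\item lift each glued measure to a coupling $\pi_k\in\Tplan(\mu_Y,\mu_Z)$ on $Y\times Z$ with the directed distance $\max\{d_Y,d_Z\}$;
\item pass to a limit $\pi$ using compactness of $\Tplan(\mu_Y,\mu_Z)$;
\item use box-closedness to place $(\supp\pi,\max\{d_Y,d_Z\},\pi)$ in the limit.
\end{enumerate}
This is the analogue of \cite[Lemma~5.6]{gds2}, which the paper uses in proving \Cref{thm:gds-pyramid-weak-limit-and-sequential-compactness} and transports to qm-spaces in \Cref{cor:pyramid-weak-limit-and-sequential-compactness}.
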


\Cref{tab:introduction-overview} shows the correspondence between the four layers of the construction and the three conclusions of the main theorem.

\begin{table}[t]
\centering
\small
\setlength{\tabcolsep}{3pt}
\renewcommand{\arraystretch}{1.15}
\begin{tabular}{>{\raggedright\arraybackslash}p{0.16\textwidth}>{\raggedright\arraybackslash}p{0.20\textwidth}>{\raggedright\arraybackslash}p{0.25\textwidth}>{\raggedright\arraybackslash}p{0.27\textwidth}}
\hline
Layer & Input & Conclusion obtained & Role in the main theorem\\
\hline
Distance recovery & Ordered increments of one-sided observables & The directed distance and its symmetrization are recovered & Prevents loss of the directed object under representation\\
Box geometry & Observable families on coupled probability spaces & $(\Xp,\Box)$ is complete and separable & Supplies closedness and approximation for lower sets\\
Finite measurements & Bounded ordered observations & Pyramid weak convergence is decided and metrized & Turns transported convergence into the metric $d_{\Pi}^{\M}$\\
Pyramid compactification & Lower sets and the representation--reconstruction adjunction & Compactness, embedding, and density & Realizes the directed compactification\\
\hline
\end{tabular}
\caption{The four structural layers of the compactification.}
\label{tab:introduction-overview}
\end{table}

One-sided observables retain the order of an increment and thereby the two directed distances that symmetrization identifies. The representation--reconstruction adjunction transports the pyramid compactification to qm-spaces, together with its weak convergence and compactness.

The appendices give a categorical general theory of pyramid transport and ordered measurements that applies beyond qm-spaces.

\section*{Notation and standing conventions}

We write $\R$ for the real line, $\supp\mu$ for the support of a measure $\mu$, and $u_*\mu$ for the pushforward of $\mu$ by a measurable map $u$. Once a category of spaces or gd-sets has been introduced, the same symbol is also used for its set of isomorphism classes, and we do not distinguish notationally between an object and its isomorphism class.

For a distance or pseudodistance $d$ and a nonempty subset $A$, write
\[d(x,A)\coloneqq\inf_{a\in A}d(x,a),\]
and denote the Hausdorff distance or pseudodistance induced by $d$ on sets by $\haus{d}$. For real-valued Borel functions $h,k$ on a probability space $(W,\omega)$, set
\[\kf^\omega(h,k)\coloneqq\inf\{\varepsilon>0\mid\omega(|h-k|>\varepsilon)\leq\varepsilon\}.\]
This is the Ky Fan metric with respect to $\omega$.
For a measured space $X$ with distinguished probability measure $\mu_X$, we abbreviate $\kf^{\mu_X}$ to $\kf^X$.
We write $\dpr$ for the Prokhorov distance whenever the underlying metric is clear.

We write $\Tplan(\mu,\nu)$ for the set of all couplings of Borel probability measures $\mu$ and $\nu$, and $\pr_1,\pr_2$ for the projections from a product onto its factors. For functions pulled back to a coupling $\pi$, we use $\kf^\pi$.

\section{One-sided representation and its adjunction}
\label{sec:pyramid-transport}

The one-sided observable representation of a qm-space turns $1$-Lipschitz maps into pullback containments of function families. Its central auxiliary result is the representation--reconstruction adjunction in \Cref{thm:adjunction-direction}. An approximation property for domination supplies the objects needed at weak limits.

\subsection{One-sided observables and distance reconstruction}
\label{sec:three-measured-spaces}

\begin{definition}[mm-space]
\label{def:mm-space}
A triple $(X,d_X,\mu_X)$ is called an \emph{mm-space}, or simply $X$ is called an mm-space, if $(X,d_X)$ is a complete separable metric space and $\mu_X$ is a Borel probability measure on $(X,d_X)$ satisfying $\supp\mu_X=X$.
\end{definition}

\begin{definition*}[Isomorphism of mm-spaces {\cite[Definition~2.10]{shioya2016mmg}}]
Two mm-spaces $X$ and $Y$ are \emph{isomorphic} if there exists a measure-preserving isometric bijection between them. We denote the category of mm-spaces and measure-preserving $1$-Lipschitz maps by $\X$.
\end{definition*}

\begin{definition*}[Domination of mm-spaces {\cite[Definition~2.10]{shioya2016mmg}}]
We write $Y\preceq X$ if there exists a measure-preserving $1$-Lipschitz map $u\colon X\to Y$, and in this case we say that $X$ \emph{dominates} $Y$.
\end{definition*}

\begin{definition}[qm-space]
\label{def:asymmetric-mm-space}
A triple $(X,d_X,\mu_X)$ is called a \emph{qm-space}, or simply $X$ is called a qm-space, if the following three conditions hold.
\begin{enumerate}[label=\textup{(\roman*)}]
\item The map $d_X\colon X\times X\to[0,+\infty)$ satisfies $d_X(x,x)=0$ and the triangle inequality.
\item The symmetrization $d_X^{\mathrm s}(x,y)\coloneqq\max\{d_X(x,y),d_X(y,x)\}$ is a complete separable metric on $X$.
\item The measure $\mu_X$ is a Borel probability measure on $(X,d_X^{\mathrm s})$ with $\supp\mu_X=X$.
\end{enumerate}
\end{definition}
For related measured quasi-metric spaces, the relation between forward and backward concentration functions and the tails of one-sided Lipschitz functions, together with an application to asymmetric biological-sequence similarity, is studied in \cite[Definition~3.2 and Lemma~3.6]{stojmirovic2004quasi}. Gromov--Hausdorff-type convergence in irreversible metric-measure geometry is treated in \cite{kristaly-zhao2022geometry}. The present paper instead concerns concentration topology and pyramids.
The triangle inequality gives
\[|d_X(x,y)-d_X(x',y')|\leq d_X^{\mathrm s}(x,x')+d_X^{\mathrm s}(y,y'),\]
and therefore $d_X$ is uniformly continuous with respect to the product metric induced by $d_X^{\mathrm s}$.

\begin{definition}[Isomorphism of qm-spaces]
\label{def:asymmetric-mm-space-isomorphism}
Two qm-spaces $X$ and $Y$ are \emph{isomorphic} if there exists a bijection $\varphi\colon X\to Y$ such that
\[d_Y(\varphi(x),\varphi(x'))=d_X(x,x')\quad(x,x'\in X),\qquad \varphi_*\mu_X=\mu_Y.\]
With measure-preserving $1$-Lipschitz maps as morphisms, $\Xp$ is also regarded as a category.
\end{definition}

These two distance inequalities can be expressed in the same language by using families of real-valued functions. This will allow us to describe the morphism condition by pullback containment of function families.
Restricted-observable constructions and representations of measure spaces by function families appear in \cite{pestov2008gds,hanika2022gds}. The gd-set formalism provides the form of this representation used here.

\begin{definition}[Geometric data set {\cite[Definition~3.1]{gds1}, \cite{gds2}}]
\label{def:gds}
A triple $(X,F_X,\mu_X)$ is called a \emph{geometric data set}, or simply $X$ is called a geometric data set, if $F_X$ is a nonempty family of real-valued functions on $X$,
\[d_{F_X}(x,x')\coloneqq\sup_{f\in F_X}|f(x)-f(x')|\]
is a complete separable metric on $X$, and $\mu_X$ is a Borel probability measure on $(X,d_{F_X})$ satisfying $\supp\mu_X=X$.
\end{definition}
In this paper, a geometric data set without any additional closure condition is simply called a gd-set.

We denote by $\overline{F_X}$ the closure of $F_X$ with respect to pointwise convergence.
\begin{definition*}[Isomorphism of gd-sets {\cite[Lemma~2.2, Definitions~3.1 and~3.3, and Proposition~3.10]{gds1}}]
Two gd-sets $X$ and $Y$ are isomorphic if there exists a Borel measurable map $u\colon X\to Y$ such that $u_*\mu_X=\mu_Y$ and $\overline{F_Y}\circ u=\overline{F_X}$. This condition implies that $u$ is a measure-preserving surjective isometry with respect to $d_{F_X}$ and $d_{F_Y}$. We denote the set of isomorphism classes of gd-sets by $\D$.
\end{definition*}

\begin{definition*}[Domination of gd-sets {\cite[Definition~3.8]{gds1}}]
If a Borel measurable map $u\colon X\to Y$ satisfies
\[u_*\mu_X=\mu_Y,\qquad F_Y\circ u\subset\overline{F_X},\]
then we write $Y\preceq X$, say that $X$ \emph{dominates} $Y$, and call $u$ a \emph{domination}.
\end{definition*}

For a qm-space $X$, the relevant observable family consists of functions controlled by ordered increments. This is the semi-Lipschitz function class studied by Romaguera and Sanchis \cite{romaguera2000semi} in the setting of quasi-metric spaces.

\begin{definition}[One-sided $1$-Lipschitz functions]
\label{def:lipplus}
For a qm-space $X$, define
\[\lipplus(X)\coloneqq\{f\colon X\to\R\mid f(y)-f(x)\leq d_X(x,y)\text{ for all $x,y\in X$}\}.\]
\end{definition}

For an mm-space $X$, let $\lipone(X)$ denote the set of real-valued $1$-Lipschitz functions on $X$.

Let $I(-)\colon\X\to\Xp$ denote the inclusion functor that regards a symmetric distance as the same directed distance. Applying the one-sided inequality for a symmetric distance in both orders gives the usual $1$-Lipschitz condition. Therefore,
\begin{equation}
\label{eq:symmetric-directed-lipschitz}
\lipplus(I(X))=\lipone(X).
\end{equation}
For the same reason, for any $X,X'\in\X$,
\begin{equation}
\label{eq:i-fully-faithful}
\operatorname{Hom}_{\Xp}(I(X),I(X'))=\operatorname{Hom}_{\X}(X,X').
\end{equation}
Both sides consist of the same measure-preserving maps, and hence $I(-)$ is fully faithful.

For an mm-space $X$ and a qm-space $Y$, we have
\begin{align}
d_X(x,x')&=\sup_{f\in\lipone(X)}|f(x)-f(x')|,\label{eq:symmetric-recovery}\\
d_Y(y,y')&=\sup_{f\in\lipplus(Y)}\{f(y')-f(y)\}.\label{eq:directed-recovery}
\end{align}
Indeed, the right-hand side of \Cref{eq:directed-recovery} is at most the left-hand side by definition, while the reverse inequality follows by taking $f_y(z)\coloneqq d_Y(y,z)$. Furthermore,
\[\sup_{f\in\lipplus(Y)}|f(y)-f(y')|=\max\{d_Y(y,y'),d_Y(y',y)\}=d_Y^{\mathrm s}(y,y').\]
Applying \Cref{eq:directed-recovery} to $I(X)$ and using \Cref{eq:symmetric-directed-lipschitz} together with the closure of $\lipone(X)$ under sign reversal gives \Cref{eq:symmetric-recovery}.

The target function families must also be stable under the scalar postcompositions used in the compactness theory.

\begin{definition*}[Monoidal subfamilies]
A subfamily $\L\subset\lipone(\R)$ is \emph{monoidal} if it contains the identity map and is closed under composition and pointwise convergence.
\end{definition*}
For a real-valued function $f$ and a family $F$ of such functions, write
\[
\L\circ f\coloneqq\{p\circ f\mid p\in\L\},\qquad
\L\circ F\coloneqq\{p\circ f\mid p\in\L,\ f\in F\}.
\]
Let $\gamma$ be the standard Gaussian measure.
\begin{definition*}[Self-compactness {\cite[Definition~3.12]{gds2}}]
A monoidal subfamily $\L$ is \emph{self-compact} if
\[\L/\L\coloneqq\{\L\circ p\mid p\in\L\}\]
is compact with respect to the Hausdorff distance induced by the Ky Fan metric associated with $\gamma$.
\end{definition*}
We write the family of translations with clipping as
\[
\TB\coloneqq\{t\mapsto\max\{l,\min\{t+c,u\}\}\mid c\in\R,\ l\in[-\infty,+\infty),\ u\in(-\infty,+\infty],\ l\leq u\}
\]
\cite[Definitions~3.1 and~3.2]{gds2}. Every monoidal subfamily $\L$ considered in this paper is assumed to satisfy $\TB\subset\L$. In particular, $\L$ contains all translations and is therefore self-compact \cite[Proposition~3.14]{gds2}.

\begin{definition*}[$\L$-gd-sets {\cite[Definition~3.5]{gds2}}]
A gd-set $X$ is called an \emph{$\L$-gd-set} if
\[\L\circ\overline{F_X}\subset\overline{F_X}.\]
\end{definition*}

\begin{definition*}[$\L$-compact gd-sets {\cite[Definitions~3.3, 3.5, and~3.8]{gds2}}]
The notation $\D/\L$ denotes the set of isomorphism classes of \emph{$\L$-compact gd-sets}: such a gd-set $X$ satisfies the displayed closure condition and, for every $\varepsilon>0$, has a finite subset $\mathcal N\subset\overline{F_X}$ such that every member of $\overline{F_X}$ has $\kf^X$-distance less than $\varepsilon$ from some function in $\L\circ\mathcal N$.
\end{definition*}
For a gd-set $X$, write
\[\L\circ X\coloneqq(X,\L\circ F_X,\mu_X)\]
and call it the \emph{$\L$-saturation} of $X$. Since the identity map belongs to $\L$ and every member of $\L$ is $1$-Lipschitz, the induced metric of $\L\circ X$ equals $d_{F_X}$. Monoidality implies that $\L\circ X$ is an $\L$-gd-set. Under the standing assumption $\TB\subset\L$, the set of isomorphism classes of $\L$-gd-sets agrees with the class $\D/\L$ in \cite[Theorem~3.19]{gds2}, and we denote it by $\L\circ\D$. We use, in particular, $\lipone(\R)$ and
\[\Lup=\{p\colon\R\to\R\mid p\text{ is nondecreasing and $1$-Lipschitz}\}.\]
Both are monoidal subfamilies containing $\TB$.

We regard $\L\circ\D$ as the category of $\L$-gd-sets and dominations of gd-sets.

\subsection{Box distance on gd-sets}

For gd-sets $X,Y$, a closed set $S\subset X\times Y$, and real-valued functions $h,k$ on $X\times Y$, write
\[\dinf{S}(h,k)\coloneqq\sup_{(x,y)\in S}|h(x,y)-k(x,y)|.\]
This is a pseudometric on the family of all functions on $X\times Y$, and we use $\haus{\dinf{S}}$ on function families. When $S=\emptyset$, this Hausdorff pseudometric is set equal to $0$. For a subset $A\subset M$ of a metric space $(M,d)$, write its closed $\varepsilon$-neighborhood as
\[\mathrm B(A,\varepsilon;d)\coloneqq\{x\in M\mid d(x,A)\leq\varepsilon\}.\]

\begin{definition}[Observable distance on gd-sets {\cite[Theorem~4.6]{gds1}}]
\label{def:gds-distances}
The \emph{observable distance} between gd-sets $X$ and $Y$ is defined by
\begin{equation}
\label{eq:gds-dconc}
\dconc(X,Y)\coloneqq\inf_{\pi\in\Tplan(\mu_X,\mu_Y)}\haus{\kf^\pi}(F_X\circ\pr_1,F_Y\circ\pr_2).
\end{equation}
\end{definition}

\begin{definition}[Box distance on gd-sets {\cite[Definition~5.7, Lemma~5.12, and Theorem~5.14]{gds1}}]
\label{def:gds-box-distance}
The \emph{box distance} is defined by
\begin{equation}
\label{eq:gds-box}
\begin{aligned}
\Box(X,Y)&\coloneqq\inf\left\{\max\{1-\pi(S),2\haus{\dinf{S}}(\overline{F_X}\circ\pr_1,\overline{F_Y}\circ\pr_2)\}\ \middle|\right.\\
&\left.\begin{array}{l}
\pi\in\Tplan(\mu_X,\mu_Y),\\
S\subset X\times Y\text{ is closed}
\end{array}\right\}.
\end{aligned}
\end{equation}
\end{definition}

\subsection{Representation and reconstruction of qm-spaces}
\label{sec:adjunctions}

\begin{definition}[Representation functor]
\label{def:space-to-gds}
Define
\[\operatorname{Rep}^{+}(-)\colon\Xp\longrightarrow\Lup\circ\D\]
by
\[\operatorname{Rep}^{+}(Y)\coloneqq(Y,\lipplus(Y),\mu_Y).\]
It leaves the underlying maps unchanged on morphisms.
\end{definition}

This assignment is well-defined as a functor. The metric, completeness, separability, and measure conditions required for $\operatorname{Rep}^{+}(Y)$ to be a gd-set follow from \Cref{eq:directed-recovery}. Let $p\in\Lup$ and $f\in\lipplus(Y)$. If $f(y')\geq f(y)$, then
\[p(f(y'))-p(f(y))\leq f(y')-f(y)\leq d_Y(y,y'),\]
whereas if $f(y')<f(y)$, monotonicity implies that the left-hand side is nonpositive. Therefore, $p\circ f\in\lipplus(Y)$. This function family is closed under pointwise convergence, so the resulting gd-set is an $\Lup$-gd-set.

A $1$-Lipschitz map $u\colon Y\to Y'$ pulls $\lipplus(Y')$ back into $\lipplus(Y)$. Thus, the assignment maps morphisms to morphisms and preserves identity morphisms and composition.

\begin{definition}[Reconstruction functor from gd-sets]
\label{def:gds-to-space}
For a gd-set $X\in\Lup\circ\D$, set
\[
d_X^+(x,x')\coloneqq\sup_{f\in\overline{F_X}}\{f(x')-f(x)\},\qquad
\operatorname{Rec}^{+}(X)\coloneqq(X,d_X^+,\mu_X).
\]
It leaves the underlying maps unchanged on morphisms.
\end{definition}

\begin{remark}[Compatibility with the symmetric case]
\label{rem:symmetric-reconstruction-bridge}
For $Z\in\lipone(\R)\circ\D$, define
\[\operatorname{Rec}(Z)\coloneqq(Z,d_{F_Z},\mu_Z).\]
The inclusion $-\operatorname{id}_{\R}\in\lipone(\R)$ implies that $f\in\overline{F_Z}$ if and only if $-f\in\overline{F_Z}$. Therefore,
\[\sup_{f\in\overline{F_Z}}\{f(z')-f(z)\}=\sup_{f\in\overline{F_Z}}|f(z')-f(z)|,\]
so the directed and symmetric reconstructions agree.
\end{remark}

\begin{theorem}[Representation--reconstruction adjunction for qm-spaces]
\label{thm:adjunction-direction}
There is a natural adjunction $\operatorname{Rep}^{+}\dashv\operatorname{Rec}^{+}$. More precisely, there is a natural bijection that leaves the underlying maps unchanged,
\begin{equation}
\label{eq:hom-asymmetric}
\operatorname{Hom}_{\Lup\circ\D}(\operatorname{Rep}^{+}(Y),Y')\simeq\operatorname{Hom}_{\Xp}(Y,\operatorname{Rec}^{+}(Y')).
\end{equation}
The unit of this adjunction is an isomorphism, and $\operatorname{Rep}^{+}(-)$ is fully faithful.
\end{theorem}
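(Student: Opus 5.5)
The plan is to verify, for a fixed qm-space $Y$ and a fixed $\Lup$-gd-set $Y'$, that a Borel map $u\colon Y\to Y'$ with $u_*\mu_Y=\mu_{Y'}$ satisfies $F_{Y'}\circ u\subset\overline{\lipplus(Y)}$ exactly when it is $1$-Lipschitz from $(Y,d_Y)$ to $(Y',d_{Y'}^+)$. Naturality is then automatic, because both sides keep the same underlying maps and composition is ordinary composition of maps. First I check that $\operatorname{Rec}^{+}(Y')$ is a qm-space. The function $d_{Y'}^+$ vanishes on the diagonal and satisfies the triangle inequality as a supremum of increments. Its symmetrization is $\sup_{f\in\overline{F_{Y'}}}|f(x)-f(x')|$, and this equals $d_{F_{Y'}}$ because pointwise closure does not change the sup of $|f(x)-f(x')|$. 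Hence the symmetrization is complete and separable, and $\mu_{Y'}$ has full support. As noted after \Cref{def:space-to-gds}, $\lipplus(Y)$ is closed under pointwise convergence, so $\overline{\lipplus(Y)}=\lipplus(Y)$.

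($\Leftarrow$) Suppose $d_{Y'}^+(u(y),u(y'))\le d_Y(y,y')$. Every $f\in F_{Y'}\subset\overline{F_{Y'}}$ satisfies $f(u(y'))-f(u(y))\le d_{Y'}^+(u(y),u(y'))\le d_Y(y,y')$. Thus $f\circ u\in\lipplus(Y)$, and $u$ is a domination. Conversely ($\Rightarrow$), suppose $F_{Y'}\circ u\subset\lipplus(Y)$. Let $f\in\overline{F_{Y'}}$ be a pointwise limit of $f_\alpha\in F_{Y'}$. Then $f\circ u$ is the pointwise limit of $f_\alpha\circ u\in\lipplus(Y)$, so $f\circ u\in\lipplus(Y)$ by pointwise closedness. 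Taking the supremum over $f$ gives $d_{Y'}^+(u(y),u(y'))\le d_Y(y,y')$. Measurability needs no comment, since a map that is $1$-Lipschitz for $d_Y$ is continuous for the symmetrizations and is therefore Borel.

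For the unit, I compute $\operatorname{Rec}^{+}\operatorname{Rep}^{+}(Y)=(Y,d^+,\mu_Y)$, where $d^+(y,y')=\sup_{f\in\lipplus(Y)}\{f(y')-f(y)\}$. By \Cref{eq:directed-recovery} this is exactly $d_Y$, so the unit is the identity map, which is an isomorphism of qm-spaces. Full faithfulness of $\operatorname{Rep}^{+}$ then follows from the general fact that a left adjoint is fully faithful iff its unit is an isomorphism. It can also be seen directly: the map $\operatorname{Hom}(Y,Z)\to\operatorname{Hom}(\operatorname{Rep}^+Y,\operatorname{Rep}^+Z)$ is the bijection above composed with the identification $\operatorname{Rec}^+\operatorname{Rep}^+Z=Z$.

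The main point of care is not the formal adjunction but the gd-set conventions. Hom sets are defined via the condition $F_{Y'}\circ u\subset\overline{F_Y}$, and isomorphism classes via closures. So one must confirm that replacing $Y'$ by an isomorphic representative does not change $\operatorname{Rec}^{+}(Y')$ up to qm-isomorphism. This holds because an isomorphism $\varphi$ satisfies $\overline{F}\circ\varphi=\overline{F'}$ and hence preserves $d^+$. One must also check that $\operatorname{Rec}^{+}$ is a functor on dominations. If $\overline{F_{Z}}\circ v\subset\overline{F_{Y'}}$ (first passing from $F_Z\circ v$ to its closure by the same pointwise argument), then $v$ is $1$-Lipschitz for $d^+$. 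I expect this closure bookkeeping to be the only real obstacle.
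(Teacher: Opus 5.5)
Your argument follows the paper's proof closely. It proves functoriality of $\operatorname{Rec}^{+}$ by passing from $F_Z\circ v$ to $\overline{F_Z}\circ v\subset\overline{F_{Y'}}$. It gets the hom-set bijection from pointwise closedness of $\lipplus(Y)$, and the unit from \Cref{eq:directed-recovery}. Your remarks on Borel measurability and on invariance of $d^+$ under gd-set isomorphism are correct, and the paper leaves them implicit.

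One step does not go through as written. When you check that $\operatorname{Rec}^{+}(Y')$ is a qm-space, \Cref{def:asymmetric-mm-space} requires $d_{Y'}^+\geq0$. Vanishing on the diagonal and the triangle inequality do not give this. They only give $d_{Y'}^+(x,x')+d_{Y'}^+(x',x)\geq0$. Your symmetrization identity only shows that the larger of the two directed values is nonnegative. For an arbitrary gd-set the claim is actually false: on $(\R,\{\operatorname{id}_{\R}\},\gamma)$ one gets $d^+(x,x')=x'-x$, which is negative whenever $x'<x$.

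The missing ingredient is the hypothesis $Y'\in\Lup\circ\D$, which your proposal never uses. The family $\overline{F_{Y'}}$ is $\Lup$-closed, and $\Lup$ contains every constant map. Composing a constant $c$ with any $f_0\in\overline{F_{Y'}}$ therefore shows that every constant function lies in $\overline{F_{Y'}}$. Constants contribute the increment $0$ to the supremum defining $d_{Y'}^+$, so $d_{Y'}^+\geq0$. This is exactly how the paper's proof begins. With this line added, your proof is complete.
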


\begin{proof}
\emph{Claim.} The assignment $\operatorname{Rec}^{+}(-)$ is a functor from $\Lup\circ\D$ to $\Xp$.

The family $\overline{F_X}$ is $\Lup$-closed, and $\Lup$ contains every constant function. Fix $f_0\in\overline{F_X}$. Every constant function belongs to $\overline{F_X}$ as the composition of a constant map with $f_0$. Therefore, $d_X^+$ is nonnegative and $d_X^+(x,x)=0$. For every $f\in\overline{F_X}$, the decomposition
\[f(x'')-f(x)=\{f(x'')-f(x')\}+\{f(x')-f(x)\}\]
and taking suprema give the triangle inequality for $d_X^+$. We also have
\[\max\{d_X^+(x,x'),d_X^+(x',x)\}=\sup_{f\in\overline{F_X}}|f(x)-f(x')|=d_{F_X}(x,x').\]
Thus, the symmetrization agrees with the metric of the gd-set, and the completeness, separability, and measure conditions also hold.

Let $u\colon X\to Y$ be a domination of gd-sets. Taking the pointwise closure of $F_Y\circ u\subset\overline{F_X}$ gives $\overline{F_Y}\circ u\subset\overline{F_X}$. Taking the supremum of the increments therefore yields $d_Y^+(u(x),u(x'))\leq d_X^+(x,x')$. Thus, $\operatorname{Rec}^{+}(-)$ maps morphisms to morphisms.
This proves the claim.

Let $u\colon Y\to Y'$ be a measure-preserving map. Since $\lipplus(Y)$ is closed under pointwise convergence, $u$ is a morphism of gd-sets from $\operatorname{Rep}^{+}(Y)$ to $Y'$ if and only if
\[F_{Y'}\circ u\subset\lipplus(Y).\]
If this inclusion holds, then $\overline{F_{Y'}}\circ u\subset\lipplus(Y)$ because $\lipplus(Y)$ is closed under pointwise convergence. Taking the supremum of the increments, we obtain $d_{Y'}^+(u(y),u(y'))\leq d_Y(y,y')$. Conversely, if $u$ is $1$-Lipschitz, then every $f\in F_{Y'}$ satisfies
\[f(u(y'))-f(u(y))\leq d_{Y'}^+(u(y),u(y'))\leq d_Y(y,y').\]
This proves \Cref{eq:hom-asymmetric}. The bijection leaves the underlying maps unchanged and is therefore natural. The unit $Y\to\operatorname{Rec}^{+}(\operatorname{Rep}^{+}(Y))$ is the identity map and is an isomorphism by \Cref{eq:directed-recovery}. Therefore, $\operatorname{Rep}^{+}$ is fully faithful.
This completes the proof.
\end{proof}

The one-sided observable representation pulls the gd-set box and concentration distances back to qm-spaces. The symmetric observable representation similarly pulls the gd-set box distance back to mm-spaces.

\begin{definition}[Box distance on qm-spaces]
\label{def:pullback-distances}
For qm-spaces $X,Y$, define
\[\Box(X,Y)\coloneqq\Box(\operatorname{Rep}^{+}(X),\operatorname{Rep}^{+}(Y)).\]
\end{definition}

\begin{definition}[Observable distance on qm-spaces]
\label{def:pullback-observable-distance}
For qm-spaces $X,Y$, define
\[\dconc(X,Y)\coloneqq\dconc(\operatorname{Rep}^{+}(X),\operatorname{Rep}^{+}(Y)).\]
\end{definition}

\begin{definition}[Box distance on mm-spaces]
\label{def:pullback-mm-box-distance}
For an mm-space $X$, set
\[\operatorname{Rep}(X)\coloneqq(X,\lipone(X),\mu_X).\]
For mm-spaces $X,Y$, define
\[\Box(X,Y)\coloneqq\Box(\operatorname{Rep}(X),\operatorname{Rep}(Y)).\]
\end{definition}

\begin{theorem}[Separation of the pullback distances]
\label{thm:pullback-metrics}
The distances $\Box$ and $\dconc$ in \Cref{def:pullback-distances,def:pullback-observable-distance} are metrics on $\Xp$, and the distance $\Box$ in \Cref{def:pullback-mm-box-distance} is a metric on $\X$.
\end{theorem}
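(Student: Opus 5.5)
The strategy is to pull back the known metric properties of the gd-set distances along the representation functors. By \cite[Theorem~4.6 and Theorem~5.14]{gds1}, the observable distance $\dconc$ and the box distance $\Box$ on gd-sets are metrics on the set $\D$ of isomorphism classes. Hence the pulled-back distances are automatically nonnegative, symmetric, and satisfy the triangle inequality. The only thing to prove is separation: if the distance between two spaces vanishes, the spaces are isomorphic. By the metric property on $\D$, a vanishing distance forces $\operatorname{Rep}^{+}(X)$ and $\operatorname{Rep}^{+}(Y)$ to be isomorphic gd-sets (respectively $\operatorname{Rep}(X)$ and $\operatorname{Rep}(Y)$). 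Well-definedness on isomorphism classes is also needed, but it is immediate: an isomorphism of qm-spaces pulls $\lipplus$ back to $\lipplus$, so it induces a gd-set isomorphism.

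So the key step is to show that $\operatorname{Rep}^{+}$ reflects isomorphisms. Suppose $u\colon X\to Y$ is a Borel map with $u_*\mu_X=\mu_Y$ and
\[\overline{\lipplus(Y)}\circ u=\overline{\lipplus(X)}.\]
Both families are already closed under pointwise convergence, so this reads $\lipplus(Y)\circ u=\lipplus(X)$. The recovery formula \Cref{eq:directed-recovery}, applied on both sides, gives
\[d_Y(u(x),u(x'))=\sup_{g\in\lipplus(Y)}\{g(u(x'))-g(u(x))\}=\sup_{f\in\lipplus(X)}\{f(x')-f(x)\}=d_X(x,x').\]
Thus $u$ preserves the directed distance. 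In particular it is an isometry for the symmetrizations, hence injective.

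Surjectivity comes from the gd-set isomorphism statement recalled in the paper: such a $u$ is a measure-preserving surjective isometry for $d_{F_X}$ and $d_{F_Y}$. One can also see it directly. Since $(X,d_X^{\mathrm s})$ is complete, $u(X)$ is closed in $Y$. Since it has full $\mu_Y$-measure and $\supp\mu_Y=Y$, it equals $Y$. Hence $u$ is an isomorphism of qm-spaces in the sense of \Cref{def:asymmetric-mm-space-isomorphism}. The same argument works for $\dconc$, because \Cref{def:pullback-observable-distance} uses the same representation and gd-set isomorphism classes.

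For the mm-space statement, apply the identical argument with $\operatorname{Rep}$ and \Cref{eq:symmetric-recovery} in place of \Cref{eq:directed-recovery}. Alternatively, note that $\operatorname{Rep}(X)=\operatorname{Rep}^{+}(I(X))$ by \Cref{eq:symmetric-directed-lipschitz}, and use that $I(-)$ is fully faithful (\Cref{eq:i-fully-faithful}) and injective on isomorphism classes.

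The main obstacle is to make sure the cited gd-set results really give metrics on isomorphism classes with exactly the isomorphism notion used here. This requires checking that $\operatorname{Rep}^{+}(X)$ lies in the class of gd-sets where those theorems apply, including any $\L$-compactness hypothesis for $\Box$. Here I would invoke the identification $\Lup\circ\D=\D/\Lup$ from \cite[Theorem~3.19]{gds2} recalled above. The gd-set theorems may instead be stated only for $\L$-compact gd-sets or for closed families; then I would use that $\lipplus(X)$ is pointwise closed and $\Lup$-saturated to reduce to that setting.
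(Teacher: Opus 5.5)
Your overall route is the paper's: pull back nonnegativity, symmetry and the triangle inequality from the gd-set distances, get $\operatorname{Rep}^{+}(X)\simeq\operatorname{Rep}^{+}(Y)$ from vanishing distance, and reflect this back to $X\simeq Y$. Your reflection step is correct. It uses $\lipplus(Y)\circ u=\lipplus(X)$ and \Cref{eq:directed-recovery} to get a directed isometry, then completeness and full support for surjectivity. This is just an explicit version of the paper's appeal to full faithfulness of $\operatorname{Rep}^{+}$ in \Cref{thm:adjunction-direction}. The reduction of the mm-case via $\operatorname{Rep}=\operatorname{Rep}^{+}\circ I$ (from \Cref{eq:symmetric-directed-lipschitz}) is also fine.

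The step that is not justified as written is separation for $\dconc$ at the gd-set level. You attribute ``$\dconc$ and $\Box$ are metrics on $\D$'' to \cite[Theorems~4.6 and~5.14]{gds1}. Those results only give the min-over-couplings representations: the Ky Fan--Hausdorff formula for $\dconc$ and the closed-set formula for $\Box$. Neither says that zero distance forces isomorphism.

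For $\Box$ this is only a citation fix: the needed statement is \cite[Proposition~5.13]{gds1}, which also gives the triangle inequality. For $\dconc$, separation is where the theorem has real content, and you do not supply it. The paper does it as follows. Put $Z=\operatorname{Rep}^{+}(X)$ and $W=\operatorname{Rep}^{+}(Y)$ with $\dconc(Z,W)=0$. Apply closedness of domination under concentration \cite[Theorem~4.16]{gds1} to the constant pair $Z\preceq Z$; the dominating copy of $Z$ concentrates to $W$, so this gives $Z\preceq W$. Symmetrically $W\preceq Z$, and antisymmetry of $\preceq$ on $\D$ \cite[Proposition~3.9]{gds1} yields $Z\simeq W$. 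The triangle inequality for $\dconc$ on gd-sets comes from \cite[Theorem~3.10]{hanika2022gds}.

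With these inputs your argument goes through unchanged. Your concern about $\L$-compactness hypotheses is unnecessary, since the cited results hold for arbitrary gd-sets in $\D$.
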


\begin{proof}
Nonnegativity and symmetry follow from the definition. The triangle inequalities for $\Box$ and $\dconc$ on gd-sets follow from \cite[Proposition~5.13]{gds1} and \cite[Theorem~3.10]{hanika2022gds}, respectively. Since the distances in the statement are pullbacks of these gd-set distances, their triangle inequalities follow.

Let $X,Y\in\X$ and suppose that $\Box(X,Y)=0$. By \cite[Proposition~5.13]{gds1}, their symmetric observable representations are isomorphic. The distance recovery in \Cref{eq:symmetric-recovery} shows that the underlying measure-preserving bijection is an isometry, so $X\simeq Y$.

Now let $X,Y\in\Xp$ and suppose that $\rho(X,Y)=0$ for $\rho\in\{\Box,\dconc\}$. If $\rho=\Box$, then \cite[Proposition~5.13]{gds1} gives $\operatorname{Rep}^{+}(X)\simeq\operatorname{Rep}^{+}(Y)$. If $\rho=\dconc$, reflexivity and the closedness of domination under concentration \cite[Theorem~4.16]{gds1}, applied in both directions, give mutual domination. The antisymmetry of $\preceq$ \cite[Proposition~3.9]{gds1} again gives $\operatorname{Rep}^{+}(X)\simeq\operatorname{Rep}^{+}(Y)$. In either case, the full faithfulness of $\operatorname{Rep}^{+}$ from \Cref{thm:adjunction-direction} gives $X\simeq Y$.
This completes the proof.
\end{proof}

\subsection{Pyramids and weak convergence}

\begin{definition}[Pyramids for qm-spaces and gd-sets]
\label{def:qm-pyramid}
Let $\mathcal C\in\{\Xp,\Lup\circ\D\}$. The following specializes \Cref{def:categorical-pyramid} to the two categories used in this paper. A subset $\mathcal P\subset\mathcal C$ is called a \emph{pyramid} if it satisfies the following conditions.
\begin{enumerate}[label=\textup{(\roman*)}]
\item If $A\preceq B\in\mathcal P$, then $A\in\mathcal P$.
\item If $A,B\in\mathcal P$, then there exists $C\in\mathcal P$ such that $A\preceq C$ and $B\preceq C$.
\item The set $\mathcal P$ is nonempty and closed with respect to $\Box$.
\end{enumerate}
\end{definition}

\begin{remark}[Difference between the two directions on a three-point space]
\label{rem:three-point-directed-obstruction}
Let $X=\{a,b,c\}$ be the qm-space in \Cref{fig:three-point-directed-obstruction}, with $\mu_X(a)=0.6$ and $\mu_X(b)=\mu_X(c)=0.2$. Its nonzero distances are
\[d_X(a,b)=d_X(a,c)=d_X(b,c)=d_X(c,b)=2,\qquad d_X(b,a)=d_X(c,a)=0.1.\]
The nontrivial triangle inequalities reduce to $2\leq0.1+2$. In the directed two-point quotient obtained by merging $b$ and $c$, the $1$-Lipschitz condition bounds the distance from the mass-$0.6$ point to the mass-$0.4$ point by $2$ and the reverse distance by $0.1$. The symmetrization of $X$ is the equilateral three-point mm-space of side length $2$, whose classical pyramid contains the two-point mm-space with masses $0.6,0.4$ and distance $1$. Regarded as a qm-space, this two-point space does not belong to $\mathcal P(X)$ because its reverse distance violates the bound $0.1$.
\end{remark}

\begin{definition}[Weak convergence of pyramids]
\label{def:qm-pyramid-weak-convergence}
For $\mathcal C\in\{\Xp,\Lup\circ\D\}$, a sequence of pyramids $\mathcal P_n$ is said to \emph{converge weakly} to a pyramid $\mathcal P$ if it converges as a sequence of closed sets with respect to $\Box$ in the sense of sequential Painlev\'e--Kuratowski convergence. This is also called weak Hausdorff convergence in mm-space theory. This specialization of \Cref{def:pyramid-weak-convergence} requires the following two conditions.
\begin{enumerate}[label=\textup{(\roman*)}]
\item For every $A\in\mathcal P$, we have $\Box(A,\mathcal P_n)\to0$.
\item For every $A\notin\mathcal P$, we have $\liminf_{n\to\infty}\Box(A,\mathcal P_n)>0$.
\end{enumerate}
\end{definition}

The second condition in \Cref{def:qm-pyramid-weak-convergence} is equivalent to the following: for every subsequence $n(k)$ and every sequence $A_k\in\mathcal P_{n(k)}$, if $A_k$ converges to $A$ in box distance, then $A\in\mathcal P$. We will also use this formulation. The definitions and transport theorem for general categories are collected in \Cref{subsec:appendix-general-pyramid-transport}.

\begin{definition}[Domination refinement in the working categories]
\label{def:working-category-domination-refinement}
A category $\mathcal C\in\{\Xp,\Lup\circ\D\}$ is said to have \emph{domination refinement} if $A\preceq\overline A$ and $\overline A_n\to\overline A$ imply that there exist $A_n\in\mathcal C$ such that
\[A_n\preceq\overline A_n,\qquad A_n\longrightarrow A.\]
This property transfers a dominated object through the inner condition above and is the specialization of the general definition in \Cref{def:domination-refinement} to the two working categories.
\end{definition}

Every monoidal subfamily considered here contains $\TB$. Specializing the result of \cite[Lemma~5.4]{gds2} to $\L=\Lup$ shows that $\Lup\circ\D$ has domination refinement. Thus, objects dominated by a limit can be approximated below the corresponding terms of a weakly convergent sequence.

\begin{theorem}[Pyramid transport for qm-spaces]
\label{thm:qm-pyramid-transport}
For a pyramid $\mathcal P$ of qm-spaces, set
\begin{equation}
\label{eq:dplus-pyramid}
(\operatorname{Rep}^{+})_{\#}\mathcal P
\coloneqq\{Z\in\Lup\circ\D\mid\operatorname{Rec}^{+}(Z)\in\mathcal P\}.
\end{equation}
This is the specialization of the general transport map in \Cref{eq:pyramid-transport} to $\operatorname{Rep}^{+}\dashv\operatorname{Rec}^{+}$. Then $(\operatorname{Rep}^{+})_{\#}\mathcal P$ is a pyramid in $\Lup\circ\D$ and equals the downward closure of $\operatorname{Rep}^{+}[\mathcal P]$. For pyramids $\mathcal P_n,\mathcal P$ of qm-spaces,
\[
\mathcal P_n\longrightarrow\mathcal P
\quad\Longleftrightarrow\quad
(\operatorname{Rep}^{+})_{\#}\mathcal P_n\longrightarrow(\operatorname{Rep}^{+})_{\#}\mathcal P,
\]
and the map $\mathcal P\mapsto(\operatorname{Rep}^{+})_{\#}\mathcal P$ is injective.

For $Z\in\Lup\circ\D$, write
\[\mathcal P_{\Lup\circ\D}(Z)\coloneqq\{W\in\Lup\circ\D\mid W\preceq Z\}.\]
Together with $\mathcal P(X)$, this is the specialization of the general associated lower set in \Cref{prop:pyramidal-compactification-pullback} to the two categories. Then $\mathcal P(X)$ is a pyramid and
\begin{equation}
\label{eq:qm-associated-pyramid-pullback}
(\operatorname{Rep}^{+})_{\#}\mathcal P(X)
=\mathcal P_{\Lup\circ\D}(\operatorname{Rep}^{+}(X)).
\end{equation}
\end{theorem}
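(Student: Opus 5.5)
Everything reduces to the adjunction of \Cref{thm:adjunction-direction} together with two facts: $\Box$ on $\Xp$ is by definition the pullback of $\Box$ on $\Lup\circ\D$ along $\operatorname{Rep}^{+}$, and $\Box$ is a metric there (\Cref{thm:pullback-metrics}).

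\textbf{Preliminary observations.} For $Z\in\Lup\circ\D$, the counit $\operatorname{Rep}^{+}(\operatorname{Rec}^{+}(Z))\to Z$ has identity underlying map. Every $f\in\overline{F_Z}$ satisfies $f(z')-f(z)\leq d_Z^+(z,z')$, so $\overline{F_Z}\subset\lipplus(\operatorname{Rec}^{+}(Z))$. Hence $Z\preceq\operatorname{Rep}^{+}\operatorname{Rec}^{+}(Z)$, with identity domination.

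From the adjunction bijection we read off
\[W\preceq\operatorname{Rep}^{+}(Y)\iff\operatorname{Rec}^{+}(W)\preceq Y,\]
and $\operatorname{Rec}^{+}$ is monotone, being a functor.

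\textbf{$(\operatorname{Rep}^{+})_{\#}\mathcal P$ is a pyramid equal to the downward closure of $\operatorname{Rep}^{+}[\mathcal P]$.}
\begin{itemize}
\item \emph{Contains the downward closure.} If $W\preceq\operatorname{Rep}^{+}(Y)$ with $Y\in\mathcal P$, then $\operatorname{Rec}^{+}(W)\preceq Y$, so $\operatorname{Rec}^{+}(W)\in\mathcal P$.
\item \emph{Contained in it.} If $\operatorname{Rec}^{+}(Z)\in\mathcal P$, the counit gives $Z\preceq\operatorname{Rep}^{+}(\operatorname{Rec}^{+}Z)$.
\item \emph{Downward closed.} This is automatic for a downward closure.
\item \emph{Directed.} Upper bounds come from $\mathcal P$ via $\operatorname{Rep}^{+}$.
\item \emph{Nonempty.} This is clear.
\item \emph{$\Box$-closed.} Suppose $Z_n\to Z$ with $Z_n\preceq\operatorname{Rep}^{+}(Y_n)$ and $Y_n\in\mathcal P$. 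I would invoke the closedness of domination under box convergence from the gds papers. Concretely, I would show $\operatorname{Rec}^{+}$ is $1$-Lipschitz for $\Box$: the $\dinf{S}$-Hausdorff bound on $\overline{F}$ transfers to the supremum-of-increments functions $d^+(x,\cdot)$ with a constant, which I would check. Then $\operatorname{Rec}^{+}(Z_n)\in\mathcal P$ converges to $\operatorname{Rec}^{+}(Z)$, and closedness of $\mathcal P$ finishes. I expect this to be the delicate point, possibly needing a factor such as $2$, which is harmless for closedness.
\end{itemize}

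\textbf{Injectivity.} We have $\operatorname{Rec}^{+}\circ\operatorname{Rep}^{+}=\mathrm{id}$ by the unit isomorphism. Hence $\mathcal P=\{\operatorname{Rec}^{+}(Z)\mid Z\in(\operatorname{Rep}^{+})_{\#}\mathcal P\}$, recovered by applying $\operatorname{Rec}^{+}$ to the transported set.

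\textbf{Equivalence of weak convergence.}
\begin{itemize}
\item \emph{Forward, condition (i).} Take $Z$ in the limit, so $Z\preceq\operatorname{Rep}^{+}(Y)$ with $Y\in\mathcal P$. Choose $Y_n\in\mathcal P_n$ with $Y_n\to Y$, so $\operatorname{Rep}^{+}(Y_n)\to\operatorname{Rep}^{+}(Y)$ isometrically. Domination refinement in $\Lup\circ\D$ then yields $Z_n\preceq\operatorname{Rep}^{+}(Y_n)$ with $Z_n\to Z$.
\item \emph{Forward, condition (ii).} Suppose $Z_k\in(\operatorname{Rep}^{+})_{\#}\mathcal P_{n(k)}$ and $Z_k\to Z$. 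Then $\operatorname{Rec}^{+}(Z_k)\to\operatorname{Rec}^{+}(Z)$ by the Lipschitz property of $\operatorname{Rec}^{+}$, and weak convergence of $\mathcal P_n$ gives $\operatorname{Rec}^{+}(Z)\in\mathcal P$.
\item \emph{Backward, condition (i).} This follows by isometry: $Y\in\mathcal P$ gives $\operatorname{Rep}^{+}(Y)$ in the limit, approximated by $Z_n$. Then $\operatorname{Rec}^{+}(Z_n)\in\mathcal P_n$ converges to $\operatorname{Rec}^{+}\operatorname{Rep}^{+}(Y)=Y$.
\item \emph{Backward, condition (ii).} If $Y_k\to Y$ with $Y_k\in\mathcal P_{n(k)}$, then $\operatorname{Rep}^{+}(Y_k)\to\operatorname{Rep}^{+}(Y)$ in the limit pyramid, so $Y\in\mathcal P$.
\end{itemize}

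\textbf{The pyramid $\mathcal P(X)$.}
\begin{itemize}
\item \emph{Downward closed and directed.} These are trivial, with $X$ itself as a common upper bound.
\item \emph{Closedness.} This comes from transporting: $\mathcal P(X)=\operatorname{Rec}^{+}[\mathcal P_{\Lup\circ\D}(\operatorname{Rep}^{+}X)]$. The set $\mathcal P_{\Lup\circ\D}(\operatorname{Rep}^{+}X)$ is box-closed by closedness of domination under box/concentration convergence \cite[Theorem~4.16]{gds1} together with $\dconc\leq\Box$. Alternatively, $Y_n\to Y$ with $Y_n\preceq X$ gives $\operatorname{Rep}^{+}Y_n\to\operatorname{Rep}^{+}Y$, so $\operatorname{Rep}^{+}Y\preceq\operatorname{Rep}^{+}X$, and full faithfulness gives $Y\preceq X$.
\item \emph{\Cref{eq:qm-associated-pyramid-pullback}.} This is exactly the displayed adjunction equivalence with $Y=X$.
\end{itemize}

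The main obstacle is the Lipschitz continuity (or closedness compatibility) of $\operatorname{Rec}^{+}$ with respect to $\Box$.
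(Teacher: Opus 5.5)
Your proposal is correct and is essentially the paper's proof. The paper checks that $\operatorname{Rep}^{+}\dashv\operatorname{Rec}^{+}$ is a pyramidal adjunction: the unit is an isomorphism, $\operatorname{Rep}^{+}$ is box-isometric by definition, $\operatorname{Rec}^{+}$ is box-nonexpansive, and domination refinement holds by \cite[Lemma~5.4]{gds2}. It then runs exactly your counit, adjunction-order, and refinement arguments, packaged as \Cref{thm:abstract-pyramid-transport} and \Cref{prop:pyramidal-compactification-pullback}, and it proves closedness of $\mathcal P(X)$ by your second route. Citing \cite[Theorem~4.16]{gds1} together with $\dconc\leq\Box$ there is a valid substitute for \Cref{prop:domination-box-closedness}.

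The step you flag as the main obstacle is \Cref{prop:right-adjoint-box-nonexpansive}, which the paper establishes beforehand with constant $1$ in two steps. First, the Hausdorff bound gives $\dis S\leq2H_S$ for the reconstructed directed distances. Second, \Cref{lem:asymmetric-box-upper-by-distortion} uses the one-sided extension $E_Sf$ to control all of $\lipplus(\operatorname{Rec}^{+}Z)$. That second step is genuinely needed, because this family can be strictly larger than $\overline{F_Z}$, so your sketch of transferring the bound to $d^+(x,\cdot)$ covers only the distortion half.
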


The pyramid $\mathcal P(X)$ in \Cref{thm:qm-pyramid-transport} is called the \emph{associated pyramid} of $X$.

The proof is given in \Cref{sec:transport-hypotheses} after the box-isometry of $\operatorname{Rep}^{+}$, the box-nonexpansiveness of $\operatorname{Rec}^{+}(-)$, and the closedness of domination have been established. General pyramid transport on categories and the three-layer diagram including the symmetric theory are given in \Cref{subsec:appendix-general-pyramid-transport,subsec:appendix-three-tier-adjunction}.
\section{Box geometry of qm-spaces}
\label{sec:preliminaries}

Hausdorff error between representing function families controls the difference between their reconstructed directed distances. This estimate proves separation of the pullback distances and box-nonexpansiveness of reconstruction.

\begin{proposition}[Closedness of domination]
\label{prop:domination-box-closedness}
Suppose that gd-sets $X_n,Y_n,X,Y$ satisfy
\[
Y_n\preceq X_n,\qquad
\Box(X_n,X)\longrightarrow0,\qquad
\Box(Y_n,Y)\longrightarrow0.
\]
Then $Y\preceq X$.
\end{proposition}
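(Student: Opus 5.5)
The plan is to reduce the statement to the closedness of domination under the observable distance, which the proof of \Cref{thm:pullback-metrics} already uses as \cite[Theorem~4.16]{gds1}. The key input is the comparison $\dconc\leq\Box$ on gd-sets from \cite[Proposition~5.8]{gds1}, quoted in the introduction. Since the hypotheses give $\Box(X_n,X)\to0$ and $\Box(Y_n,Y)\to0$, this comparison yields
\[\dconc(X_n,X)\leq\Box(X_n,X)\longrightarrow0,\qquad \dconc(Y_n,Y)\leq\Box(Y_n,Y)\longrightarrow0.\]
Together with $Y_n\preceq X_n$ for every $n$, these are exactly the hypotheses of \cite[Theorem~4.16]{gds1}, which then gives $Y\preceq X$.

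The first thing to check is that \cite[Theorem~4.16]{gds1} is stated for arbitrary gd-sets, with both sequences converging in $\dconc$ and no extra closure assumption such as $\L$-closedness. If it is stated only for $\L$-gd-sets, I would pass to $\Lup$-saturations. Saturation leaves the induced metric and the measure unchanged. I would then check that $\L\circ-$ preserves domination and does not increase $\Box$, because $\Box$ uses the pointwise closures $\overline{F}$ and postcomposition by $1$-Lipschitz maps does not increase $\dinf{S}$. I would also need that domination of the saturations descends to the original gd-sets, or at least suffices for how \Cref{prop:domination-box-closedness} is used later.

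If a self-contained argument is preferred, I would reprove the closedness directly, and this is the step I expect to be the main obstacle. First, choose couplings $\pi_n\in\Tplan(\mu_{X_n},\mu_X)$ and $\sigma_n\in\Tplan(\mu_{Y_n},\mu_Y)$ that nearly realize the box distances. Compose them with the dominations $u_n\colon X_n\to Y_n$ via gluing to get couplings of $\mu_X$ and $\mu_Y$ concentrated near graphs. These couplings are tight, so by Prokhorov a subsequence converges to some $\rho\in\Tplan(\mu_X,\mu_Y)$.

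The hard part is showing that $\rho$ is supported on the graph of a measurable map $u$ with $F_Y\circ u\subset\overline{F_X}$. For each $g\in F_Y$, I would transfer $g$ to $Y_n$ up to a small $\dinf{S}$-error. I would pull it back by $u_n$ into $\overline{F_{X_n}}$, then transfer it to some $f_n\in\overline{F_X}$ with small $\kf$-error. Finally, I would extract a limit $f\in\overline{F_X}$ using pointwise compactness of $\overline{F_X}$, after truncating with $\TB$. Showing that $g(y)=f(x)$ for $\rho$-almost every $(x,y)$ forces $\rho$ to be a graph, because $d_{F_Y}$ separates points, and it gives the required inclusion.
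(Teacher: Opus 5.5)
Your main argument is correct, and it takes a genuinely different and much shorter route than the paper. The paper's own dependency list records \cite[Proposition~5.8]{gds1} ($\dconc\leq\Box$) and \cite[Theorem~4.16]{gds1} for arbitrary geometric data sets, with no closure hypothesis, so your preliminary check passes and the saturation fallback is unnecessary. That is fortunate, because domination of saturations only gives $F_Y\circ u\subset\overline{\L\circ F_X}$, which need not imply $F_Y\circ u\subset\overline{F_X}$. Your reduction also adds no new dependency, since the paper already invokes \cite[Theorem~4.16]{gds1} in the proof of \Cref{thm:pullback-metrics}.

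The paper instead proves the box version directly:
\begin{enumerate}
\item It glues the near-optimal couplings, the graph coupling of $u_n$, and a limit is taken to obtain a coupling $\pi$ of $\mu_X$ and $\mu_Y$.
\item It transfers each $g\in F_Y$ through $Y_n$, $u_n$, $X_n$ to some $f_n\in\overline{F_X}$ with uniform error $2\varepsilon_n$ on the good quadruples.
\item It uses the Portmanteau theorem to show that a pointwise limit $f$ satisfies $f(x)=g(y)$ on all of $\supp\pi$, so $\supp\pi$ is the graph of the domination.
\end{enumerate}
The direct route is self-contained and stays inside the box topology. Your route uses the stronger, concentration-topology closedness theorem as a black box.

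Your fallback sketch is essentially the paper's argument, with two adjustments. First, the transfer error supplied by $\Box$ is a $\dinf{S}$-error on closed sets of large measure, not a $\kf$-error. Second, truncation by $\TB$ is not available here, because for a general gd-set $\overline{F_X}$ need not be $\TB$-closed. The paper avoids truncation by bounding $(f_n)$ at one point, using good quadruples whose endpoints approach a point of $\supp\pi$. It then extracts the limit from the $1$-Lipschitz property, separability, and a diagonal argument.
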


\begin{proof}
Choose numbers $\varepsilon_n\downarrow0$ such that
\[\Box(X_n,X)<\varepsilon_n,\qquad \Box(Y_n,Y)<\varepsilon_n,\]
and choose couplings
\[
\alpha_n\in\Tplan(\mu_X,\mu_{X_n}),\qquad
\beta_n\in\Tplan(\mu_{Y_n},\mu_Y),
\]
and closed sets $A_n\subset X\times X_n$ and $B_n\subset Y_n\times Y$ such that
\[\alpha_n(A_n),\beta_n(B_n)\geq1-\varepsilon_n\]
and the two Hausdorff errors in \Cref{eq:gds-box} are at most $\varepsilon_n$.
Let $u_n\colon X_n\to Y_n$ be a domination. Apply the gluing of transport plans twice
\cite[Definition~4.1]{shioya2024sgc} to $\alpha_n$, the graph coupling
$(\operatorname{id}_{X_n},u_n)_*\mu_{X_n}$, and $\beta_n$. This gives a probability measure
$\eta_n$ on $X\times X_n\times Y_n\times Y$. Its endpoint marginal $\pi_n$ is a coupling
of $\mu_X$ and $\mu_Y$. By compactness of the set of couplings with fixed marginals
\cite[Lemma~2.11]{gds1}, after passing to a subsequence we have
\[\pi_n\longrightarrow\pi\in\Tplan(\mu_X,\mu_Y).\]
Write $R\coloneqq\supp\pi$.

Fix $g\in F_Y$. The Hausdorff estimate on $B_n$ gives
$g_n\in\overline{F_{Y_n}}$ whose values differ from those of $g$ by at most
$\varepsilon_n$ on $B_n$. Since $u_n$ is a domination,
$g_n\circ u_n\in\overline{F_{X_n}}$. The Hausdorff estimate on $A_n$ then gives
$f_n\in\overline{F_X}$ whose values differ from those of $g_n\circ u_n$ by at most
$\varepsilon_n$ on $A_n$. Consequently,
\begin{equation}
\label{eq:exact-domination-glued-error}
|f_n(x)-g(y)|\leq2\varepsilon_n
\end{equation}
whenever $(x,x_n)\in A_n$, $y_n=u_n(x_n)$, and $(y_n,y)\in B_n$.
The set of such quadruples has $\eta_n$-measure at least $1-2\varepsilon_n$.

For every $(x,y)\in R$, weak convergence of $\pi_n$ and
$\varepsilon_n\to0$ yield quadruples satisfying these three conditions whose endpoints
converge to $(x,y)$. Indeed, the Portmanteau theorem
\cite[Lemma~1.13]{shioya2016mmg} gives
$\liminf_n\pi_n(U)\geq\pi(U)>0$ for every open neighborhood $U$ of $(x,y)$.
The endpoint marginal of the complement of the good quadruples has mass at most
$2\varepsilon_n$. Therefore, $U$ contains a good endpoint for all sufficiently large $n$.
Passing to a further subsequence and applying this observation
at one point of $R$ shows that $(f_n)$ is bounded at one point of $X$. The functions
$f_n$ are $1$-Lipschitz with respect to $d_{F_X}$, so separability and a diagonal
argument give a further subsequence converging pointwise to some
$f\in\overline{F_X}$. Applying \Cref{eq:exact-domination-glued-error} at an arbitrary
point of $R$ gives
\[f(x)=g(y)\qquad ((x,y)\in R).\]

Repeating this argument for each $g\in F_Y$ shows that, for
$(x,y),(x',y')\in R$,
\[d_{F_Y}(y,y')\leq d_{F_X}(x,x').\]
The first projection of $R$ is dense in $X$ because $\mu_X$ has full support.
The preceding inequality and the completeness of $Y$ extend $R$ to the graph of
a uniquely determined $1$-Lipschitz map $u\colon X\to Y$. Since $R$ is closed, it
is precisely this graph. The second marginal of $\pi$ gives $u_*\mu_X=\mu_Y$, and
the equality constructed above gives $F_Y\circ u\subset\overline{F_X}$. Thus $u$
is a domination and $Y\preceq X$.
This completes the proof.
\end{proof}

\subsection{Box and concentration distances}

For two qm-spaces $X,Y$ and a closed set $S\subset X\times Y$, set
\[\dis S\coloneqq\sup\{|d_X(x,x')-d_Y(y,y')|\mid(x,y),(x',y')\in S\}.\]
If $S=\emptyset$, set $\dis S\coloneqq0$ \cite[Definition~5.2]{gds1}.

The gd-set box distance controls a Hausdorff error between function families, while the reconstructed qm-space is compared through distortion of its directed metric. We first bound the space-side box distance by this distortion and then bound the distortion by the function-family error. These two estimates yield the box-nonexpansiveness of the right adjoint and the completeness of the pullback distance.

\begin{lemma}
\label{lem:asymmetric-box-upper-by-distortion}
For $A,B\in\Xp$, $\pi\in\Tplan(\mu_A,\mu_B)$, and a closed set $S\subset A\times B$, we have
\[\Box(A,B)\leq\max\{1-\pi(S),\dis S\}.\]
\end{lemma}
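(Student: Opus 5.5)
The plan is to unwind \Cref{def:pullback-distances} and bound the gd-set box distance \Cref{eq:gds-box} for $\operatorname{Rep}^{+}(A)$ and $\operatorname{Rep}^{+}(B)$, using the given $\pi$ and $S$ as the competitor pair. Since $\lipplus(A)$ and $\lipplus(B)$ are closed under pointwise convergence, the closures in \Cref{eq:gds-box} are these families themselves. It therefore suffices to prove
\[\haus{\dinf{S}}(\lipplus(A)\circ\pr_1,\lipplus(B)\circ\pr_2)\leq\tfrac12\dis S.\]
Three cases are trivial and will be dismissed first. If $S=\emptyset$, the Hausdorff term is $0$ by convention, so the competitor value is $\max\{1,0\}=1$, which is exactly the right-hand side. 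If $\dis S=+\infty$, there is nothing to prove. So assume $S\neq\emptyset$ and set $\delta\coloneqq\dis S<\infty$.

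The key step is a one-sided McShane-type extension across $S$. Given $f\in\lipplus(A)$, I would define
\[g(y)\coloneqq\inf_{(x',y')\in S}\{f(x')+d_B(y',y)\}+\tfrac{\delta}{2}\qquad(y\in B).\]

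\emph{Finiteness.} Fix $(x_0,y_0)\in S$. By the triangle inequality and the distortion bound,
\[f(x')+d_B(y',y)\geq f(x')+d_B(y',y_0)-d_B(y,y_0)\geq f(x')+d_A(x',x_0)-\delta-d_B(y,y_0).\]
Since $f\in\lipplus(A)$ gives $f(x')+d_A(x',x_0)\geq f(x_0)$, this is at least $f(x_0)-\delta-d_B(y,y_0)$. Hence $g$ is real-valued.

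\emph{Membership in $\lipplus(B)$.} The inequality $d_B(y',y_2)\leq d_B(y',y_1)+d_B(y_1,y_2)$ passes to the infimum and yields $g(y_2)-g(y_1)\leq d_B(y_1,y_2)$.

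\emph{Approximation on $S$.} For $(x,y)\in S$, the choice $(x',y')=(x,y)$ gives $g(y)\leq f(x)+\delta/2$. The same estimate as above, with $(x_0,y_0)$ replaced by $(x,y)$, gives $\inf\geq f(x)-\delta$, so $g(y)\geq f(x)-\delta/2$. Together,
\[\sup_{(x,y)\in S}|f(x)-g(y)|\leq\delta/2.\]

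The argument is symmetric: interchanging the roles of $A$ and $B$, and of the two coordinates of $S$, produces for each $g\in\lipplus(B)$ an $f\in\lipplus(A)$ within $\delta/2$ on $S$. This bounds both halves of the Hausdorff distance by $\delta/2$. Plugging $\pi$ and $S$ into \Cref{eq:gds-box} then gives $\Box(A,B)\leq\max\{1-\pi(S),\delta\}$.

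The only delicate point is the orientation of the infimal convolution. One must use $d_B(y',y)$, with the fixed base point first, so that $g$ is one-sided $1$-Lipschitz in the sense of \Cref{def:lipplus}. The distortion must also be applied to the pair $d_A(x',x)$, $d_B(y',y)$ in that same orientation, which $\dis S$ allows since it ranges over all ordered pairs in $S$.
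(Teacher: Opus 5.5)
Your proposal is correct and follows essentially the same route as the paper: the paper uses the same one-sided infimal convolution $(E_Sf)(v)=\inf_{(u,w)\in S}\{f(u)+d_B(w,v)\}$, the same finiteness estimate via a fixed base point $(x_0,y_0)\in S$, and the same two-sided bound $f(x)-\dis S\leq(E_Sf)(y)\leq f(x)$ on $S$. It then shifts by $\tfrac12\dis S$ and swaps $A$ and $B$, exactly as you do. The only difference is cosmetic: you build the shift $\delta/2$ into $g$ from the start.
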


\begin{proof}
The assertion is immediate if $S=\emptyset$ or $\dis S=+\infty$, so assume that $S\neq\emptyset$ and $\dis S<+\infty$. For $f\in\lipplus(A)$, set
\[(E_Sf)(v)\coloneqq\inf_{(u,w)\in S}\{f(u)+d_B(w,v)\}\qquad(v\in B).\]
Fix $(x_0,y_0)\in S$. For every $(u,w)\in S$ and $v\in B$,
\[f(u)+d_B(w,v)\geq f(x_0)-\dis S-d_B(v,y_0),\]
while choosing $(u,w)=(x_0,y_0)$ in the definition gives
$(E_Sf)(v)\leq f(x_0)+d_B(y_0,v)$. Thus, $E_Sf$ takes finite real values. The triangle inequality gives
\[(E_Sf)(v')-(E_Sf)(v)\leq d_B(v,v'),\]
and therefore $E_Sf\in\lipplus(B)$. Moreover, for $(x,y)\in S$,
\[f(x)-\dis S\leq(E_Sf)(y)\leq f(x).\]
Thus, the uniform distance on $S$ between $E_Sf+(\dis S)/2$ and $f$ is at most $(\dis S)/2$. Interchanging $A$ and $B$ gives the same estimate, and hence
\[2\haus{\dinf{S}}(\lipplus(A)\circ\pr_1,\lipplus(B)\circ\pr_2)\leq\dis S.\]
The conclusion follows from \Cref{def:pullback-distances,eq:gds-box}.
This completes the proof.
\end{proof}

\begin{proposition}[Box-nonexpansiveness of reconstruction]
\label{prop:right-adjoint-box-nonexpansive}
For $X,Y\in\Lup\circ\D$, we have
\begin{equation}
\label{eq:xp-box-nonexpansive}
\Box(\operatorname{Rec}^{+}(X),\operatorname{Rec}^{+}(Y))\leq\Box(X,Y).
\end{equation}
For $X,Y\in\lipone(\R)\circ\D$, we have
\begin{equation}
\label{eq:x-box-nonexpansive}
\Box(\operatorname{Rec}(X),\operatorname{Rec}(Y))\leq\Box(X,Y).
\end{equation}
Therefore, both $\operatorname{Rep}^{+}\circ\operatorname{Rec}^{+}$ and $\operatorname{Rep}\circ\operatorname{Rec}$ are nonexpansive with respect to the box distance between gd-sets.
\end{proposition}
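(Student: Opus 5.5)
The plan is to go through \Cref{lem:asymmetric-box-upper-by-distortion}: from data nearly achieving $\Box(X,Y)$ we build a coupling and a closed set whose distortion for the reconstructed directed distances is small. Fix $\varepsilon>\Box(X,Y)$. By \Cref{eq:gds-box} we choose $\pi\in\Tplan(\mu_X,\mu_Y)$ and a closed set $S\subset X\times Y$ such that
\[
1-\pi(S)<\varepsilon,\qquad 2\haus{\dinf{S}}(\overline{F_X}\circ\pr_1,\overline{F_Y}\circ\pr_2)<\varepsilon.
\]
The underlying sets, measures, and symmetrized metrics of $\operatorname{Rec}^{+}(X)$ and $X$ coincide; this was shown in the claim inside the proof of \Cref{thm:adjunction-direction}. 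Hence $\pi$ is a coupling of the reconstructed measures, and $S$ is still closed.

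The key step is to bound $\dis S$ computed with $d_X^+$ and $d_Y^+$. Take $(x,y),(x',y')\in S$ and $f\in\overline{F_X}$. The Hausdorff bound gives $g\in\overline{F_Y}$ with $|f(u)-g(w)|<\varepsilon/2$ for all $(u,w)\in S$. Therefore
\[
f(x')-f(x)<g(y')-g(y)+\varepsilon\leq d_Y^+(y,y')+\varepsilon.
\]
Taking the supremum over $f$ gives $d_X^+(x,x')\leq d_Y^+(y,y')+\varepsilon$, and exchanging the roles of $X$ and $Y$ gives $\dis S\leq\varepsilon$.

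This bound is only $\varepsilon$, not $\varepsilon/2$. That loses nothing: the definition weights the Hausdorff error by $2$, so $\varepsilon$ already controls twice that error. Applying \Cref{lem:asymmetric-box-upper-by-distortion} gives
\[
\Box(\operatorname{Rec}^{+}(X),\operatorname{Rec}^{+}(Y))\leq\max\{1-\pi(S),\dis S\}\leq\varepsilon.
\]
Letting $\varepsilon\downarrow\Box(X,Y)$ yields \Cref{eq:xp-box-nonexpansive}.

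For \Cref{eq:x-box-nonexpansive} I will use the same argument. By \Cref{rem:symmetric-reconstruction-bridge}, $d_{F_X}$ equals the directed reconstruction, because $\overline{F_X}$ is closed under sign reversal. So the same bound controls the symmetric distortion. I also need the symmetric analogue of \Cref{lem:asymmetric-box-upper-by-distortion}. I will obtain it by running the same extension $E_S f$ with $d_B$ symmetric; since $d_B$ is symmetric, $E_Sf$ lies in $\lipplus(B)=\lipone(B)$ by \Cref{eq:symmetric-directed-lipschitz}.

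The final assertion then follows. By \Cref{def:pullback-distances}, $\Box(\operatorname{Rep}^{+}(\operatorname{Rec}^{+}X),\operatorname{Rep}^{+}(\operatorname{Rec}^{+}Y))$ is exactly the left-hand side of \Cref{eq:xp-box-nonexpansive}, and likewise \Cref{def:pullback-mm-box-distance} handles $\operatorname{Rep}\circ\operatorname{Rec}$.

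The main obstacle I expect is the case $S=\emptyset$ or degenerate distortion. If $S=\emptyset$, then $\pi(S)=0$ forces $\varepsilon>1$, and the bound $\Box\leq1$ is immediate because the lemma also handles $S=\emptyset$. A second point to watch is that the suprema defining $d^+$ may be infinite. This cannot happen, because $d_X^+\leq d_{F_X}$ is finite.
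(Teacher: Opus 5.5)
Your proposal is correct and matches the paper's proof: you bound the directed distortion of $S$ by twice the Hausdorff error and feed it into \Cref{lem:asymmetric-box-upper-by-distortion}. For the symmetric case, the paper applies that lemma to $I(\operatorname{Rec}(X))$ and $I(\operatorname{Rec}(Y))$ using the box-isometry of $I$, which amounts to your rerun of $E_Sf$ with symmetric $d_B$; the only other difference is that you fix near-optimal $(\pi,S)$ in advance instead of taking the infimum over arbitrary $(\pi,S)$ at the end.
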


\begin{proof}
Take a closed set $S\subset X\times Y$ and set
\[H_S\coloneqq\haus{\dinf{S}}(\overline{F_X}\circ\pr_1,\overline{F_Y}\circ\pr_2).\]
The estimate below is immediate if $H_S=+\infty$, so assume that $H_S<+\infty$. For $(x,y),(x',y')\in S$, $f\in\overline{F_X}$, and any $\varepsilon>0$, take $g\in\overline{F_Y}$ such that
\[\dinf{S}(f\circ\pr_1,g\circ\pr_2)<H_S+\varepsilon.\]
Then
\begin{align*}
f(x')-f(x)&\leq g(y')-g(y)+2H_S+2\varepsilon\\
&\leq d_Y^+(y,y')+2H_S+2\varepsilon.
\end{align*}
Taking the supremum, letting $\varepsilon\downarrow0$, and then interchanging $X$ and $Y$, we obtain
\[\dis S\leq2H_S,\]
where $\dis S$ is the distortion with respect to the directed distances of the two reconstructed spaces. Applying \Cref{lem:asymmetric-box-upper-by-distortion} for any $\pi\in\Tplan(\mu_X,\mu_Y)$ gives
\[
\Box(\operatorname{Rec}^{+}(X),\operatorname{Rec}^{+}(Y))
\leq\max\{1-\pi(S),2H_S\}.
\]
Taking the infimum over $\pi$ and $S$ proves \Cref{eq:xp-box-nonexpansive}.

Now let $X,Y\in\lipone(\R)\circ\D$, and use the $H_S$ defined above for a closed set $S\subset X\times Y$. The estimate below is immediate if $S=\emptyset$ or $H_S=+\infty$, so assume that $S\neq\emptyset$ and $H_S<+\infty$. For $(x,y),(x',y')\in S$, $f\in\overline{F_X}$, and any $\varepsilon>0$, take $g\in\overline{F_Y}$ such that
\[\dinf{S}(f\circ\pr_1,g\circ\pr_2)<H_S+\varepsilon.\]
Then
\[
|f(x)-f(x')|\leq |g(y)-g(y')|+2H_S+2\varepsilon
\leq d_{F_Y}(y,y')+2H_S+2\varepsilon.
\]
Taking the supremum, letting $\varepsilon\downarrow0$, and then interchanging $X$ and $Y$, we obtain
\[
\dis_{\mathrm s}S\coloneqq
\sup_{\substack{(x,y),(x',y')\in S}}
\bigl|d_{F_X}(x,x')-d_{F_Y}(y,y')\bigr|
\leq2H_S.
\]
Regard the reconstructed mm-spaces as qm-spaces through $I$. By \Cref{def:pullback-distances,def:pullback-mm-box-distance,eq:symmetric-directed-lipschitz}, their box distance is unchanged. Applying \Cref{lem:asymmetric-box-upper-by-distortion} for any $\pi\in\Tplan(\mu_X,\mu_Y)$ gives
\[
\Box(\operatorname{Rec}(X),\operatorname{Rec}(Y))
\leq\max\{1-\pi(S),\dis_{\mathrm s}S\}
\leq\max\{1-\pi(S),2H_S\}.
\]
Taking the infimum over $\pi$ and $S$ and using \Cref{eq:gds-box} proves \Cref{eq:x-box-nonexpansive}. The assertions about the composites follow from \Cref{def:pullback-distances,def:pullback-mm-box-distance}.
This completes the proof.
\end{proof}

\begin{corollary}[Completeness and separability of the box distance]
\label{cor:pullback-box-polish}
The metric space $(\Xp,\Box)$ is complete and separable.
\end{corollary}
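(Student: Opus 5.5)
The plan is to transfer completeness and separability from the gd-set side, using the box-isometric embedding $\operatorname{Rep}^{+}$ and the retraction $\operatorname{Rec}^{+}$. By \Cref{def:pullback-distances}, $\operatorname{Rep}^{+}\colon(\Xp,\Box)\to(\Lup\circ\D,\Box)$ is an isometry onto its image. By \Cref{thm:adjunction-direction} the unit is an isomorphism, so $\operatorname{Rec}^{+}\circ\operatorname{Rep}^{+}=\operatorname{id}_{\Xp}$ on isomorphism classes. By \Cref{prop:right-adjoint-box-nonexpansive}, $\operatorname{Rec}^{+}$ is $1$-Lipschitz for the box distances. Thus $(\Xp,\Box)$ is a $1$-Lipschitz retract of the image $\operatorname{Rep}^{+}[\Xp]\subset\Lup\circ\D$. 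More precisely, $\operatorname{Rep}^{+}\circ\operatorname{Rec}^{+}$ is a $1$-Lipschitz retraction of $\Lup\circ\D$ onto that image.

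The first input I need from the upstream papers is that the box-distance space $(\Lup\circ\D,\Box)$ is complete and separable. I expect this to be recorded in \cite{gds1,gds2}: completeness and separability of $\D$ with $\Box$ in \cite{gds1}, and closedness of the $\L$-gd-set class in \cite{gds2}. If it is only stated for $\D$, I would first show that $\Lup\circ\D$ is $\Box$-closed in $\D$. To do this, I would take $\Box$-limits of $\Lup$-gd-sets and check that $\Lup\circ\overline{F}\subset\overline F$ passes to the limit, using the Hausdorff estimate on large sets $S$ together with the fact that the maps in $\Lup$ are $1$-Lipschitz.

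Separability then follows. The image $\operatorname{Rep}^{+}[\Xp]$ is a subset of a separable metric space, hence separable, and it is isometric to $(\Xp,\Box)$. Alternatively, take a countable dense set $\{Z_k\}\subset\Lup\circ\D$; its image $\{\operatorname{Rec}^{+}(Z_k)\}$ is dense in $\Xp$, because $\Box(X,\operatorname{Rec}^{+}(Z_k))\leq\Box(\operatorname{Rep}^{+}(X),Z_k)$.

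Completeness is argued as follows. Let $(X_n)$ be $\Box$-Cauchy in $\Xp$. Then $\operatorname{Rep}^{+}(X_n)$ is Cauchy in $\Lup\circ\D$ and converges to some $Z$. Set $X\coloneqq\operatorname{Rec}^{+}(Z)\in\Xp$; this is a qm-space by the Claim in the proof of \Cref{thm:adjunction-direction}. By nonexpansiveness,
\[
\Box(X_n,X)=\Box\bigl(\operatorname{Rec}^{+}(\operatorname{Rep}^{+}(X_n)),\operatorname{Rec}^{+}(Z)\bigr)\leq\Box(\operatorname{Rep}^{+}(X_n),Z)\longrightarrow0.
\]
The main obstacle is the upstream input, namely completeness of $(\Lup\circ\D,\Box)$, or the closedness of the $\Lup$-class under box limits. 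Everything else is a formal consequence of the retraction structure.
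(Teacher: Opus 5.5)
Your proposal is correct and takes essentially the paper's approach. The paper takes completeness and separability of $(\Lup\circ\D,\Box)$ directly from \cite[Theorem~4.10]{gds2}, using the identification of $\Lup\circ\D$ with the $\Lup$-compact class, so your fallback closedness argument is not needed. It then observes that the idempotent nonexpansive map $\operatorname{Rep}^{+}\circ\operatorname{Rec}^{+}$ has a closed separable image isometric to $(\Xp,\Box)$, which is exactly what your Cauchy-sequence and density arguments verify.
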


\begin{proof}
The space $\Lup\circ\D$ is complete and separable \cite[Theorem~4.10]{gds2}. By \Cref{prop:right-adjoint-box-nonexpansive}, $(\operatorname{Rep}^{+}\circ\operatorname{Rec}^{+})(-)$ is nonexpansive, and it is idempotent because the unit is an isomorphism. Therefore, its image is a closed separable subspace of $\Lup\circ\D$. By \Cref{def:pullback-distances}, this image is isometric to $\Xp$, which proves the result.
This completes the proof.
\end{proof}
\subsection{Upstream geometric-data-set dependencies}
\label{sec:upstream-dependencies}
The following list records every fixed statement from the two upstream papers on geometric data sets \cite{gds1,gds2} used in this manuscript. For each entry, ``First use'' identifies the earliest use, which may precede this list.

\begingroup\raggedright
\begin{description}
\item[\cite{gds1}, Lemma~2.2]
Let $X$ be a second-countable space, $Y$ a topological space, $f\colon X\to Y$ a continuous map, and $\mu$ a Borel measure on $X$. Then
\[\supp f_*\mu=\overline{f(\supp\mu)}.\]

\textup{Use.} Shows that the measure-preserving isometric embedding underlying a gd-set isomorphism has dense image; completeness makes the image closed, and hence the embedding is surjective.

\textup{First use.} The unnumbered definition of isomorphism of gd-sets in \Cref{sec:three-measured-spaces}

\item[\cite{gds1}, Definition~3.1]
A triple $(X,F_X,\mu_X)$, or simply $X$, is a geometric data set if $F_X$ is a nonempty family of real-valued functions on $X$,
\[d_{F_X}(x,x')\coloneqq\sup_{f\in F_X}|f(x)-f(x')|\]
is a complete separable metric on $X$, and $\mu_X$ is a Borel probability measure on $(X,d_{F_X})$ with full support.

\textup{Use.} Defines the gd-set triple, induced distance, completeness and separability, and full support.

\textup{First use.} \Cref{def:gds}

\item[\cite{gds1}, Definition~3.3]
Two geometric data sets $X$ and $Y$ are isomorphic if there is a Borel measurable map $f\colon X\to Y$ such that $f_*\mu_X=\mu_Y$ and $\overline{F_Y}\circ f=\overline{F_X}$. The set of isomorphism classes is denoted by $\D$.

\textup{Use.} Defines isomorphism of gd-sets and the class $\D$.

\textup{First use.} The unnumbered definition of isomorphism of gd-sets in \Cref{sec:three-measured-spaces}

\item[\cite{gds1}, Definition~3.8]
A geometric data set $X$ dominates a geometric data set $Y$, written $Y\preceq X$, if there is a Borel measurable $f\colon X\to Y$ with $f_*\mu_X=\mu_Y$ and $F_Y\circ f\subset\overline{F_X}$. The map $f$ is a domination, and $\preceq$ is the feature order relation.

\textup{Use.} Defines domination by pullback containment of function families.

\textup{First use.} The unnumbered definition of isomorphism and domination of gd-sets in \Cref{sec:three-measured-spaces}

\item[\cite{gds1}, Lemma~2.11]
For Borel probability measure spaces $(X,\mu)$ and $(Y,\nu)$, $\Tplan(\mu,\nu)$ is $\dpr$-compact.

\textup{Use.} Takes a limiting coupling from couplings with fixed marginals.

\textup{First use.} \Cref{prop:domination-box-closedness}

\item[\cite{gds1}, Proposition~3.9]
The relation $\preceq$ is a partial order on $\D$.

\textup{Use.} Gives antisymmetry from mutual domination.

\textup{First use.} \Cref{thm:pullback-metrics}

\item[\cite{gds1}, Proposition~3.10]
Let $X$ be an mm-space and $Y$ a metric space. On $\lipone(X,Y)$, pointwise convergence and convergence with respect to $\kf^X$ are equivalent.

\textup{Use.} Identifies pointwise closure with Ky Fan closure for the $1$-Lipschitz feature families used in gd-set isomorphism.

\textup{First use.} The unnumbered definition of isomorphism of gd-sets in \Cref{sec:three-measured-spaces}

\item[\cite{gds1}, Theorem~4.6]
For geometric data sets $X,Y$,
\[\dconc(X,Y)=\min_{\pi\in\Tplan(\mu_X,\mu_Y)}\haus{\kf^\pi}(F_X\circ\pr_1,F_Y\circ\pr_2).\]

\textup{Use.} Gives the coupling--Ky Fan Hausdorff representation of $\dconc$.

\textup{First use.} \Cref{def:gds-distances}

\item[\cite{gds1}, Theorem~4.16]
Let $X,Y,X_n,Y_n$ be geometric data sets, where $n=1,2,\ldots$. If $X_n\preceq Y_n$ for all $n$, and $X_n$ and $Y_n$ concentrate to $X$ and $Y$, respectively, as $n\to\infty$, then $X\preceq Y$.

\textup{Use.} Gives closedness of domination under concentration.

\textup{First use.} \Cref{thm:pullback-metrics}

\item[\cite{gds1}, Definition~5.2]
For metric spaces $X,Y$ and a closed subset $S\subset X\times Y$,
\[\dis S\coloneqq\sup\{\lvert d_X(x_1,x_2)-d_Y(y_1,y_2)\rvert\mid(x_1,y_1),(x_2,y_2)\in S\}\]
if $S\ne\emptyset$, and $\dis S\coloneqq0$ if $S=\emptyset$.

\textup{Use.} Defines the distortion of a closed relation.

\textup{First use.} \Cref{lem:asymmetric-box-upper-by-distortion}

\item[\cite{gds1}, Definition~5.7]
For geometric data sets $X,Y$, $\pi\in\Tplan(\mu_X,\mu_Y)$, $F\subset\lipone(X)$, $G\subset\lipone(Y)$, and closed $S\subset X\times Y$, set
\[\Box^S_\pi(F,G)\coloneqq\max\{1-\pi(S),2\haus{\dinf{S}}(F\circ\pr_1,G\circ\pr_2)\}\]
and
\[\Box_\pi(F,G)\coloneqq\inf\{\Box^S_\pi(F,G)\mid S\subset X\times Y\text{ is closed}\};\]
set
\[\Box(X,Y)\coloneqq\inf\{\Box_\pi(F_X,F_Y)\mid\pi\in\Tplan(\mu_X,\mu_Y)\}.\]

\textup{Use.} Introduces the gd-set box-distance input.

\textup{First use.} \Cref{def:gds-box-distance}

\item[\cite{gds1}, Proposition~5.8]
For all geometric data sets $X,Y$,
\[\dconc(X,Y)\leq\Box(X,Y).\]

\textup{Use.} Compares the directed concentration and box distances.

\textup{First use.} \Cref{sec:introduction}, in the paragraph preceding \Cref{thm:intro-qm-pyramidal-compactification}

\item[\cite{gds1}, Lemma~5.12]
If $X,Y$ are geometric data sets, $\pi\in\Tplan(\mu_X,\mu_Y)$, $S\subset X\times Y$ is closed, $G\subset\lipone(X)$, and $H\subset\lipone(Y)$, then
\[\Box_\pi(\overline G,H)=\Box_\pi(G,H)=\Box_\pi(G,\overline H).\]

\textup{Use.} Justifies the same closed-set and coupling box representation.

\textup{First use.} \Cref{def:gds-box-distance}

\item[\cite{gds1}, Proposition~5.13]
The box distance $\Box$ is a metric on $\D$.

\textup{Use.} Supplies the triangle inequality and zero-distance separation for $\Box$.

\textup{First use.} \Cref{thm:pullback-metrics}

\item[\cite{gds1}, Theorem~5.14]
For geometric data sets $X,Y$,
\[
\Box(X,Y)=\min\{\Box^S_\pi(\overline{F_X},\overline{F_Y})\mid\pi\in\Tplan(\mu_X,\mu_Y),\ S\subset X\times Y\text{ is closed}\}.
\]

\textup{Use.} Identifies the adopted box representation with the upstream box distance.

\textup{First use.} \Cref{def:gds-box-distance}

\item[\cite{gds2}, Definition~3.1]
A subfamily $\L\subset\lipone(\R)$ is monoidal if it contains the identity map, is closed under composition, and is closed under pointwise convergence.

\textup{Use.} Defines the scalar postcomposition families used for gd-sets.

\textup{First use.} The unnumbered definition of monoidal subfamilies in \Cref{sec:three-measured-spaces}

\item[\cite{gds2}, Definition~3.2]
The family
\[\TB=\{t\mapsto\max\{l,\min\{t+c,u\}\}\mid c\in\R,\ l\in[-\infty,+\infty),\ u\in(-\infty,+\infty],\ l\leq u\}\]
is the smallest monoidal family containing all translations and all symmetric clipping maps.

\textup{Use.} Defines the standing translation-and-clipping family contained in every monoidal subfamily considered here.

\textup{First use.} The displayed definition of $\TB$ in \Cref{sec:three-measured-spaces}

\item[\cite{gds2}, Definition~3.3]
For a geometric data set $X$, a subfamily $F\subset\overline{F_X}$, and $\varepsilon>0$, the $(\varepsilon,\L)$-covering number of $F$ is the least cardinality of a finite $\mathcal N\subset F$ such that every member of $F$ has $\kf^X$-distance less than $\varepsilon$ from some member of $\L\circ\mathcal N$.

\textup{Use.} Supplies the finite-feature covering condition in the definition of an $\L$-compact gd-set.

\textup{First use.} The unnumbered definition of $\L$-compact gd-sets in \Cref{sec:three-measured-spaces}

\item[\cite{gds2}, Definition~3.5]
A subfamily $F\subset\overline{F_X}$ is $\L$-closed if $\L\circ F\subset F$, and it is $\L$-compact if it is $\L$-closed and has finite $(\varepsilon,\L)$-covering number for every $\varepsilon>0$. A geometric data set is $\L$-closed or $\L$-compact when its closed feature family has the corresponding property.

\textup{Use.} Defines $\L$-gd-sets through closure under scalar postcomposition and $\L$-compact gd-sets through this closure together with the finite covering condition.

\textup{First use.} The unnumbered definition of $\L$-gd-sets in \Cref{sec:three-measured-spaces}

\item[\cite{gds2}, Definition~3.8]
The $\L$-compact class $\D/\L$ is the set of isomorphism classes of $\L$-compact geometric data sets.

\textup{Use.} Defines the notation $\D/\L$ and its objects.

\textup{First use.} The unnumbered definition of $\L$-compact gd-sets in \Cref{sec:three-measured-spaces}

\item[\cite{gds2}, Definition~3.12]
Let $\gamma$ be the standard Gaussian measure on $\R$. A monoidal subfamily $\L\subset\lipone(\R)$ is self-compact if
\[\L/\L\coloneqq\{\L\circ p\mid p\in\L\},\]
viewed as a family of closed subsets of $\lipone(\R)$ equipped with the Hausdorff distance induced by $\kf^\gamma$, is compact.

\textup{Use.} Defines self-compact monoidal function families.

\textup{First use.} The unnumbered definition of self-compactness in \Cref{sec:three-measured-spaces}

\item[\cite{gds2}, Proposition~3.14]
The identity-only monoidal family and the family of clipping maps $t\mapsto\max\{-R,\min\{t,R\}\}$, $R\in[0,+\infty]$, are self-compact. More generally, every monoidal subfamily containing all translations $t\mapsto t+c$, $c\in\R$, is self-compact. In particular, the family of all translations, $\TB$, and $\lipone(\R)$ are self-compact.

\textup{Use.} Obtains self-compactness of $\L$ from $\TB\subset\L$.

\textup{First use.} The paragraph following the definition of $\TB$ in \Cref{sec:three-measured-spaces}

\item[\cite{gds2}, Theorem~3.19]
Assume that $\L$ contains every translation $t\mapsto t+c$, $c\in\R$, and let $X$ be a geometric data set. Every $\L$-closed subfamily $F\subset F_X$ is $\L$-compact.

\textup{Use.} Identifies $\L$-gd-sets with $\D/\L$ under $\TB\subset\L$.

\textup{First use.} The paragraph defining $\L\circ X$ in \Cref{sec:three-measured-spaces}

\item[\cite{gds2}, Theorem~4.10]
The $\L$-compact class $\D/\L$ is $\Box$-complete and separable.

\textup{Use.} Gives Box completeness and separability, and supports subsequence extraction for pyramids.

\textup{First use.} \Cref{cor:pullback-box-polish}

\item[\cite{gds2}, Definition~5.1]
A subset $\mathcal P\subset\D/\L$ is an $\L$-pyramid if the following conditions hold. \textup{(1)} For $X\in\D/\L$ and $Y\in\mathcal P$, $X\preceq Y$ implies $X\in\mathcal P$. \textup{(2)} For $X,Y\in\mathcal P$, some $Z\in\mathcal P$ satisfies $X,Y\preceq Z$. \textup{(3)} The set $\mathcal P$ is nonempty and $\Box$-closed.

\textup{Use.} Defines pyramids in a general $\Box$-metrized category.

\textup{First use.} \Cref{def:categorical-pyramid}

\item[\cite{gds2}, Lemma~5.4]
Let $\bar X,Y,\bar Y$ be $\L$-compact geometric data sets with $Y\preceq\bar Y$, and let $\L$ be self-compact. Then an $\L$-compact geometric data set $X$ exists with $X\preceq\bar X$ and $\Box(X,Y)\leq\Box(\bar X,\bar Y)$.

\textup{Use.} Gives domination refinement, its descent to qm-spaces, the inner transport condition, and lower closedness of weak limits.

\textup{First use.} The paragraph following \Cref{def:working-category-domination-refinement}

\item[\cite{gds2}, Lemma~5.6]
Let $\{X_n\}_{n=1}^\infty$, $\{Y_n\}_{n=1}^\infty$, and $\{\bar Z_n\}_{n=1}^\infty$ be sequences of $\L$-compact geometric data sets, and let $X,Y$ be $\L$-compact geometric data sets. If \textup{(a)} $X_n,Y_n\preceq\bar Z_n$ for all $n=1,2,\ldots$, and \textup{(b)} $X_n\to X$ and $Y_n\to Y$ in the $\Box$-sense as $n\to\infty$, then there are $\L$-compact $Z_n$ such that \textup{(1)} $X_n,Y_n\preceq Z_n\preceq\bar Z_n$ for all $n=1,2,\ldots$, and \textup{(2)} a subsequence $Z_{n(m)}$ converges to $Z$ in the $\Box$-sense.

\textup{Use.} Replaces common upper bounds by downward refinements with a convergent subsequence, proving directedness of weak limits.

\textup{First use.} \Cref{thm:gds-pyramid-weak-limit-and-sequential-compactness}

\item[\cite{gds2}, Lemma~2.11]
Every sequence of closed sets in a complete separable metric space has an extraction that converges in the weak Hausdorff sense.

\textup{Use.} Extracts sequential Painlev\'e--Kuratowski convergent closed subsets of a Polish space.

\textup{First use.} \Cref{thm:gds-pyramid-weak-limit-and-sequential-compactness}

\item[\cite{gds2}, Lemma~7.1]
For a family $\mathcal E$ of geometric data sets, its unordered $(N,R)$-feature-measurement set consists of all geometric data sets obtained by choosing at most $N$ features from an object dominated by some member of $\mathcal E$ and clipping those features to $[-R,R]$. If $\mathcal P$ is an $\L$-pyramid, $N$ is a natural number, $R>0$ is real, and $\TB\subset\L$, then the unordered $(N,R)$-feature-measurement set of $\mathcal P$ is compact.

\textup{Use.} Gives compactness, closedness, and approximate points for unordered measurement sets.

\textup{First use.} \Cref{lem:l-pyramid-measurement-closed}

\item[\cite{gds2}, Proposition~7.2]
If $\TB\subset\L$, then for $\L$-pyramids $\mathcal P_n,\mathcal P$ ($n=1,2,\ldots$), the following are equivalent. \textup{(1)} The pyramid $\mathcal P_n$ converges to $\mathcal P$ in the weak Hausdorff sense as $n\to\infty$. \textup{(2)} For every $N\in\mathbb N$ and $R>0$, the unordered $(N,R)$-feature-measurement set of $\mathcal P_n$ converges to that of $\mathcal P$ in the Hausdorff distance induced by $\Box$. \textup{(3)} For every $N\in\mathbb N$, the convergence in \textup{(2)} holds with $R=N$.

\textup{Use.} Detects weak convergence of pyramids by unordered finite measurements.

\textup{First use.} \Cref{prop:gds2-finite-measurement-criterion}

\item[\cite{gds2}, Proposition~7.3]
If $\TB\subset\L$, then summing, over $N\geq1$, the Hausdorff distance induced by $\Box$ between the unordered $(N,N)$-feature-measurement sets of $\mathcal P$ and $\mathcal Q$, with coefficient $1/(2N\cdot2^N)$, defines a metric $\rho(\mathcal P,\mathcal Q)$ on the set of all $\L$-pyramids. The map
\[
(\D/\L,\dconc)\longrightarrow(\{\text{all $\L$-pyramids}\},\rho),\qquad
X\longmapsto\{Y\in\D/\L\mid Y\preceq X\},
\]
is a $1$-Lipschitz embedding, and the set of all $\L$-pyramids with the metric $\rho$ is a compactification of $(\D/\L,\dconc)$.

\textup{Use.} Identifies the associated-pyramid map as inducing the original topology on its image.

\textup{First use.} \Cref{thm:intro-qm-pyramidal-compactification}

\end{description}
\endgroup

\section{Finite measurements}
\label{sec:ordered-measurements}

Finite measurements of a qm-space push the measure forward by one-sided $1$-Lipschitz functions regarded as ordered coordinates. Their comparison with gd-set measurements is given in the appendix, and \Cref{thm:pyramid-ordered-measurement-criterion} states the resulting weak-convergence criterion.

For a positive integer $N$ and a real number $R>0$, let $\M(N,R)$ denote the set of all Borel probability measures on $[-R,R]^N$, equipped with the Prokhorov distance $\dpr$ induced by the $\ell^\infty$ distance. Set $b_R(t)\coloneqq\max\{-R,\min\{t,R\}\}$.

\begin{definition}[$(N,R)$-measurement of a qm-space]
\label{def:ordered-measurement}
The \emph{$(N,R)$-measurement} of a qm-space $X$ is defined by
\[
\M(X;N,R)\coloneqq\{(b_R\circ f_1,\ldots,b_R\circ f_N)_*\mu_X\mid(f_1,\ldots,f_N)\in\lipplus(X)^N\}\subset\M(N,R).
\]
For a pyramid $\mathcal P$ of qm-spaces, set
\[\M(\mathcal P;N,R)\coloneqq\bigcup_{X\in\mathcal P}\M(X;N,R).\]
This is the specialization to the representation $\operatorname{Rep}^{+}(X)$ of the gd-set definition in \Cref{sec:appendix-ordered-measurements}.
\end{definition}

\begin{lemma}
\label{lem:ordered-measurement-stability}
For qm-spaces $X,Y$, we have
\begin{equation}
\label{eq:dconc-controls-ordered}
\haus{\dpr}(\M(X;N,R),\M(Y;N,R))\leq N\dconc(X,Y).
\end{equation}
\end{lemma}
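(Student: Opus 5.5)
Fix $\varepsilon>\dconc(X,Y)$. By \Cref{def:pullback-observable-distance} and \Cref{def:gds-distances} (the infimum is attained by \cite[Theorem~4.6]{gds1}, but we need not use this), there is a coupling $\pi\in\Tplan(\mu_X,\mu_Y)$ such that
\[
\haus{\kf^\pi}\bigl(\lipplus(X)\circ\pr_1,\lipplus(Y)\circ\pr_2\bigr)<\varepsilon .
\]
By symmetry of the claimed inequality in $X$ and $Y$, it suffices to show that every element of $\M(X;N,R)$ lies within $\dpr$-distance $N\varepsilon$ of $\M(Y;N,R)$.

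Let $\mu=(b_R\circ f_1,\ldots,b_R\circ f_N)_*\mu_X$ with $f_i\in\lipplus(X)$. For each $i$, choose $g_i\in\lipplus(Y)$ with
\[
\kf^\pi(f_i\circ\pr_1,g_i\circ\pr_2)<\varepsilon,
\]
so that $\pi(|f_i\circ\pr_1-g_i\circ\pr_2|>\varepsilon)\le\varepsilon$. Set $\nu=(b_R\circ g_1,\ldots,b_R\circ g_N)_*\mu_Y\in\M(Y;N,R)$. The map
\[
(x,y)\mapsto\bigl((b_R f_i(x))_i,(b_R g_i(y))_i\bigr)
\]
pushes $\pi$ forward to a coupling of $\mu$ and $\nu$. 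Since $b_R$ is $1$-Lipschitz, outside the union of the $N$ exceptional sets the $\ell^\infty$ distance between the two images is at most $\varepsilon$. The union bound gives that this coupling assigns mass at most $N\varepsilon$ to $\{\ell^\infty\text{-distance}>\varepsilon\}$.

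The standard coupling characterisation of the Prokhorov distance (if a coupling puts mass at most $\delta$ on pairs at distance $>\varepsilon$, then $\dpr\le\max\{\varepsilon,\delta\}$) now yields $\dpr(\mu,\nu)\le\max\{\varepsilon,N\varepsilon\}=N\varepsilon$, using $N\ge1$. Letting $\varepsilon\downarrow\dconc(X,Y)$ gives \Cref{eq:dconc-controls-ordered}.

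The only delicate points are that the Hausdorff bound supplies, for each $f_i$ individually, some $g_i$ in the \emph{same} family $\lipplus(Y)$, which holds by the definition of the Hausdorff distance, and that the union bound is what produces the factor $N$. No closure is needed, because $\lipplus$ is already closed under pointwise convergence.
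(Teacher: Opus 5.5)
Your proof is correct and follows essentially the same route as the paper. The paper proves the gd-set version (Lemma~\ref{lem:general-ordered-measurement-stability}) and specializes it to $\operatorname{Rep}^{+}(X)$ and $\operatorname{Rep}^{+}(Y)$; the argument is otherwise the same as yours: a near-optimal coupling, a $g_i$ chosen for each $f_i$, the $1$-Lipschitz clipping $b_R$, a union bound giving the factor $N$, and the coupling (subtransport-plan) characterization of the Prokhorov distance.
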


This estimate follows by applying \Cref{eq:general-dconc-controls-ordered}, proved in \Cref{subsec:appendix-ordered-measurement-proofs}, to $\operatorname{Rep}^{+}(X)$ and $\operatorname{Rep}^{+}(Y)$ and using \Cref{def:pullback-observable-distance}.

\subsection{The measurement-induced pyramid metric}

\begin{definition}[Pyramid metric for qm-spaces {\cite[Definition~6.17 and Theorem~6.18]{shioya2024sgc}}]
\label{def:measurement-pyramid-metric}
For two pyramids $\mathcal P,\mathcal Q$ of qm-spaces, set
\begin{equation}
\label{eq:measurement-pyramid-metric}
d_{\Pi}^{\M}(\mathcal P,\mathcal Q)
\coloneqq\sum_{N=1}^{\infty}\frac{1}{N\cdot2^N}
\haus{\dpr}\left(
\M(\mathcal P;N,N),
\M(\mathcal Q;N,N)
\right).
\end{equation}
This is the specialization to $\L=\Lup$ of the metric for general $\L$-pyramids in \Cref{thm:general-l-ordered-measurement-metric}, pulled back along the representation $\operatorname{Rep}^{+}$. It uses the measurement-set weighting of the classical pyramid metric, with the measurements of mm-spaces replaced by the $(N,R)$-measurements above and each weight multiplied by two.
\end{definition}

\begin{theorem}[Weak convergence and the pyramid metric via finite measurements]
\label{thm:pyramid-ordered-measurement-criterion}
\label{thm:measurement-pyramid-metric}
\Cref{eq:measurement-pyramid-metric} defines a metric on the set of all pyramids of qm-spaces. For pyramids $\mathcal P_n,\mathcal P$ of qm-spaces, the following are equivalent.
\begin{enumerate}[label=\textup{(\roman*)}]
\item $\mathcal P_n$ converges weakly to $\mathcal P$.
\item For every positive integer $N$ and every real number $R>0$,
\[\haus{\dpr}(\M(\mathcal P_n;N,R),\M(\mathcal P;N,R))\longrightarrow0.\]
\item $d_{\Pi}^{\M}(\mathcal P_n,\mathcal P)\to0$.
\end{enumerate}
\end{theorem}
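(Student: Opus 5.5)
The plan is to transport everything to $\Lup\circ\D$ and apply the gd-set finite-measurement theory there. The first step is to identify the measurement sets of a qm-pyramid with those of its transported gd-pyramid, namely
\[\M(\mathcal P;N,R)=\M_{\Lup\circ\D}\bigl((\operatorname{Rep}^{+})_{\#}\mathcal P;N,R\bigr),\]
where the right-hand side is the ordered measurement set of the gd-set pyramid from \Cref{sec:appendix-ordered-measurements}.

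\emph{Inclusion $\subset$.} A measurement of $X\in\mathcal P$ uses $f_i\in\lipplus(X)=\overline{F_{\operatorname{Rep}^{+}(X)}}$. Moreover $\operatorname{Rep}^{+}(X)$ lies in $(\operatorname{Rep}^{+})_{\#}\mathcal P$, since the unit is an isomorphism by \Cref{thm:adjunction-direction}.

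\emph{Inclusion $\supset$.} If $Z\in(\operatorname{Rep}^{+})_{\#}\mathcal P$, then $\operatorname{Rec}^{+}(Z)\in\mathcal P$. Every $f\in\overline{F_Z}$ satisfies $f(x')-f(x)\leq d_Z^+(x,x')$ by definition, so $f\in\lipplus(\operatorname{Rec}^{+}(Z))$. The measure and the underlying set are unchanged, so the measurement of $Z$ is a measurement of $\operatorname{Rec}^{+}(Z)$.

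With this identity, $d_\Pi^{\M}(\mathcal P,\mathcal Q)$ equals the general measurement metric of \Cref{thm:general-l-ordered-measurement-metric} evaluated at $(\operatorname{Rep}^{+})_{\#}\mathcal P$ and $(\operatorname{Rep}^{+})_{\#}\mathcal Q$. Nonnegativity, symmetry and the triangle inequality are inherited termwise from $\haus{\dpr}$. The sum is finite because $\haus{\dpr}\leq1$. Definiteness follows from two facts: the general metric is a metric on $\Lup$-pyramids, and $\mathcal P\mapsto(\operatorname{Rep}^{+})_{\#}\mathcal P$ is injective by \Cref{thm:qm-pyramid-transport}.

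For the equivalences I would argue as follows.
\begin{enumerate}[label=\textup{(\alph*)}]
\item Condition (i) is equivalent to weak convergence of the transported pyramids, again by \Cref{thm:qm-pyramid-transport}.
\item That in turn is equivalent to Hausdorff convergence of all ordered $(N,R)$-measurement sets. This is the ordered analogue of \cite[Proposition~7.2]{gds2}, stated in the appendix as \Cref{prop:gds2-finite-measurement-criterion} and its ordered version.
\item By the identity above, this is exactly condition (ii).
\item The equivalence (ii)$\Leftrightarrow$(iii) is the usual series argument. Assuming (ii), each term of the series tends to $0$ and the terms are dominated by $1/(N2^N)$, so dominated convergence gives (iii).
\item Conversely, (iii) forces each term with $R=N$ to tend to $0$. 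To pass from $R=N$ to a general $R$, pick $N'\geq\max\{N,R\}$. Project to the first $N$ coordinates and clip with $b_R$; this map is $1$-Lipschitz for $\ell^\infty$. It sends $\M(\mathcal P;N',N')$ onto $\M(\mathcal P;N,R)$, because $b_R\circ b_{N'}=b_R$ and constant functions (for example $0$) belong to $\lipplus$. Since $\dpr$ is nonexpansive under $1$-Lipschitz pushforwards, Hausdorff convergence passes to $(N,R)$.
\end{enumerate}

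The main obstacle is step (b): the ordered measurement criterion in $\Lup\circ\D$. If the appendix only supplies the unordered version from \cite{gds2}, I would reprove it. One direction is the inner condition: use domination refinement (\cite[Lemma~5.4]{gds2}) together with the continuity estimate \Cref{eq:general-dconc-controls-ordered}, which is also what gives \Cref{lem:ordered-measurement-stability}. The other is the outer condition: from measurement sets of $\mathcal P_n$ approaching one of $\mathcal P$, construct gd-sets in $\mathcal P_n$ whose coordinates are the measured features. This should reduce to the compactness of measurement sets (\cite[Lemma~7.1]{gds2}). It should also use the fact that a gd-set generated by finitely many clipped $\Lup$-saturated features is determined, up to $\Box$-approximation, by its joint law.
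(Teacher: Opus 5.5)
Your proposal is correct and follows the paper's proof: transport weak convergence via \Cref{thm:qm-pyramid-transport}, identify $\M(\mathcal P;N,R)$ with the gd-set measurement set of $(\operatorname{Rep}^{+})_{\#}\mathcal P$, and then apply \Cref{thm:general-l-ordered-measurement-metric} with $\L=\Lup$. For the identification the paper cites \Cref{prop:space-pyramid-measurement-transport}, which passes through unordered measurements, whereas your direct argument via the unit and the inclusion $F_Z\subset\lipplus(\operatorname{Rec}^{+}(Z))$ is shorter. The ordered criterion you flag as the main obstacle is exactly that appendix theorem, which the paper derives from \cite[Proposition~7.2]{gds2} through the comparison $\frac12 b_{N,R}\le a_{N,R}\le b_{N,R}$ of \Cref{lem:l-ordered-feature-bilipschitz}, so no reproof along your sketched lines is needed; your projection-and-clipping argument for passing from $R=N$ to general $R$ is a valid elementary substitute for the paper's use of the $R=N$ clause of the unordered criterion.
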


\begin{proof}
By \Cref{thm:qm-pyramid-transport}, the convergence $\mathcal P_n\to\mathcal P$ is equivalent to
\[(\operatorname{Rep}^{+})_{\#}\mathcal P_n\longrightarrow(\operatorname{Rep}^{+})_{\#}\mathcal P.\]
The qm-space specialization of \Cref{prop:space-pyramid-measurement-transport} gives
\[\M(\mathcal P;N,R)=\M((\operatorname{Rep}^{+})_{\#}\mathcal P;N,R).\]
The equivalence of the three conditions and the metric property therefore follow by applying \Cref{thm:general-l-ordered-measurement-metric} with $\L=\Lup$.
This completes the proof.
\end{proof}

\section{Pyramids of qm-spaces}

Weak limits on the gd-set side return to qm-spaces through reconstruction and domination refinement. The results of this section complete the proof of \Cref{thm:intro-qm-pyramidal-compactification}.

\subsection{Weak limits and compactness}

Downward closedness uses approximating objects below the convergent terms, while directedness uses a compact refinement of common upper bounds. Only the latter step requires passage to a subsequence.

\begin{theorem}[Weak limits and sequential compactness of pyramids]
\label{thm:gds-pyramid-weak-limit-and-sequential-compactness}
Let $\mathcal P_n$ be pyramids in $\Lup\circ\D$. If $\mathcal P_n$ converges as a sequence of closed sets in the sense of sequential Painlev\'e--Kuratowski convergence to a box-closed set $\mathcal E\subset\Lup\circ\D$, then $\mathcal E$ is a pyramid. Moreover, every sequence of pyramids in $\Lup\circ\D$ has a subsequence that converges weakly to a pyramid.
\end{theorem}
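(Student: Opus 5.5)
We check the three pyramid axioms of \Cref{def:qm-pyramid} for the limit set $\mathcal E$ one at a time, and then obtain sequential compactness from an extraction lemma for closed sets. By hypothesis $\mathcal E$ is box-closed. For nonemptiness, note that every pyramid in $\Lup\circ\D$ contains the one-point gd-set $*$: any nonempty pyramid contains some $Z$, and the constant map $Z\to *$ is a domination because constants lie in $\overline{F_Z}$. Thus $*\in\mathcal P_n$ for all $n$, so $\Box(*,\mathcal P_n)=0$ and $*\in\mathcal E$.

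For downward closedness, let $A\preceq \overline A\in\mathcal E$. By the inner Painlev\'e--Kuratowski condition there are $\overline A_n\in\mathcal P_n$ with $\overline A_n\to\overline A$ in box distance. Domination refinement, that is \cite[Lemma~5.4]{gds2} as recorded after \Cref{def:working-category-domination-refinement}, gives $A_n\preceq\overline A_n$ with $\Box(A_n,A)\le\Box(\overline A_n,\overline A)\to0$. Since $\mathcal P_n$ is downward closed, $A_n\in\mathcal P_n$, so $\Box(A,\mathcal P_n)\to0$. Because $\mathcal E$ is the sequential Painlev\'e--Kuratowski limit, the inner limit of $\mathcal P_n$ is contained in $\mathcal E$ (points $A$ with $\Box(A,\mathcal P_n)\to0$ have liminf zero), and hence $A\in\mathcal E$.

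Directedness is the main step. Given $A,B\in\mathcal E$, choose $A_n,B_n\in\mathcal P_n$ converging to $A,B$, and common upper bounds $\overline Z_n\in\mathcal P_n$. Apply \cite[Lemma~5.6]{gds2} to get $Z_n$ with $A_n,B_n\preceq Z_n\preceq\overline Z_n$ and a subsequence $Z_{n(m)}\to Z$. Then $Z_n\in\mathcal P_n$ by downward closedness. By the reformulated outer condition (limits of $A_k\in\mathcal P_{n(k)}$ lie in $\mathcal E$) we get $Z\in\mathcal E$. \Cref{prop:domination-box-closedness}, applied along the subsequence to $A_{n(m)}\preceq Z_{n(m)}$ and $B_{n(m)}\preceq Z_{n(m)}$, gives $A,B\preceq Z$. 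The delicate point is that the convergence of $Z_n$ is only along a subsequence; this is harmless because the outer condition is stated for arbitrary subsequences.

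For the compactness assertion, $\Lup\circ\D$ is complete and separable \cite[Theorem~4.10]{gds2}. Given any sequence of pyramids, \cite[Lemma~2.11]{gds2} extracts a subsequence that converges in the weak Hausdorff, i.e.\ sequential Painlev\'e--Kuratowski, sense to a closed set $\mathcal E$. The first part then shows that $\mathcal E$ is a pyramid, and this convergence is exactly weak convergence in the sense of \Cref{def:qm-pyramid-weak-convergence}.
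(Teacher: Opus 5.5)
Your proposal is correct and follows essentially the same route as the paper's proof. Both use the one-point object for nonemptiness, \cite[Lemma~5.4]{gds2} with the inner and outer conditions for downward closedness, and \cite[Lemma~5.6]{gds2} together with \Cref{prop:domination-box-closedness} and the subsequential outer condition for directedness; both then use \cite[Theorem~4.10]{gds2} and the extraction of a Painlev\'e--Kuratowski convergent subsequence for the compactness assertion.
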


The proof is given in \Cref{subsec:appendix-compactness-refinements}.

Domination refinement descends from gd-sets to qm-spaces through reconstruction.

\begin{proposition}[Domination refinement for qm-spaces]
\label{prop:asymmetric-domination-refinement}
The category $\Xp$ has domination refinement with respect to $\Box$.
\end{proposition}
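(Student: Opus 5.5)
We must show: if $A\preceq\overline A$ in $\Xp$ and $\Box(\overline A_n,\overline A)\to0$ in $\Xp$, then there exist $A_n\in\Xp$ with $A_n\preceq\overline A_n$ and $\Box(A_n,A)\to0$. The plan is to pass to $\Lup\circ\D$ via $\operatorname{Rep}^{+}$, refine there, and come back via $\operatorname{Rec}^{+}$.

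First, by \Cref{thm:adjunction-direction} (functoriality of $\operatorname{Rep}^{+}$), the domination $A\preceq\overline A$ gives $\operatorname{Rep}^{+}(A)\preceq\operatorname{Rep}^{+}(\overline A)$ in $\Lup\circ\D$. By \Cref{def:pullback-distances}, $\Box(\operatorname{Rep}^{+}(\overline A_n),\operatorname{Rep}^{+}(\overline A))=\Box(\overline A_n,\overline A)\to0$. Now apply \cite[Lemma~5.4]{gds2} with $\L=\Lup$; it is self-compact since $\TB\subset\Lup$. We take $\bar X=\operatorname{Rep}^{+}(\overline A_n)$, $Y=\operatorname{Rep}^{+}(A)$ and $\bar Y=\operatorname{Rep}^{+}(\overline A)$. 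This yields $Z_n\in\Lup\circ\D$ with
\[Z_n\preceq\operatorname{Rep}^{+}(\overline A_n),\qquad \Box(Z_n,\operatorname{Rep}^{+}(A))\leq\Box(\overline A_n,\overline A)\to0.\]
This is exactly the domination refinement of $\Lup\circ\D$ recorded after \Cref{def:working-category-domination-refinement}.

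Next set $A_n\coloneqq\operatorname{Rec}^{+}(Z_n)\in\Xp$. The domination $Z_n\to\operatorname{Rep}^{+}(\overline A_n)$ corresponds under the adjunction bijection \Cref{eq:hom-asymmetric}, with the same underlying map, to a morphism in the other direction. More simply, we use functoriality of $\operatorname{Rec}^{+}$ and the unit isomorphism: applying $\operatorname{Rec}^{+}$ gives $\operatorname{Rec}^{+}(Z_n)\preceq\operatorname{Rec}^{+}(\operatorname{Rep}^{+}(\overline A_n))\simeq\overline A_n$, so $A_n\preceq\overline A_n$.

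For convergence, by \Cref{def:pullback-distances} and \Cref{prop:right-adjoint-box-nonexpansive} we get
\[\Box(A_n,A)=\Box(\operatorname{Rec}^{+}(Z_n),\operatorname{Rec}^{+}(\operatorname{Rep}^{+}(A)))\leq\Box(Z_n,\operatorname{Rep}^{+}(A))\to0,\]
where again we use that the unit $A\simeq\operatorname{Rec}^{+}(\operatorname{Rep}^{+}(A))$ is an isomorphism.

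The only delicate point is that $\operatorname{Rec}^{+}$ reverses nothing: it is a genuine functor on dominations, which is the Claim in the proof of \Cref{thm:adjunction-direction}. It also requires that the box distance on $\Xp$ compared in \Cref{prop:right-adjoint-box-nonexpansive} is the pulled-back one, which holds by definition. I expect no real obstacle beyond checking that the hypotheses of \cite[Lemma~5.4]{gds2} (membership in $\D/\L$) hold, which follows from the identification $\Lup\circ\D=\D/\Lup$.
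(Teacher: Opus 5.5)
Your proposal is correct and matches the paper's argument: the paper cites the abstract descent result \Cref{prop:domination-refinement-descends}, and its proof is exactly your route of refining in $\Lup\circ\D$ via \cite[Lemma~5.4]{gds2} and then applying $\operatorname{Rec}^{+}$, using the unit isomorphism, the box-isometry of $\operatorname{Rep}^{+}$, and the box-nonexpansiveness of $\operatorname{Rec}^{+}$. One small slip: the domination witnessing $Z_n\preceq\operatorname{Rep}^{+}(\overline A_n)$ is a map $\operatorname{Rep}^{+}(\overline A_n)\to Z_n$, which under \Cref{eq:hom-asymmetric} corresponds directly to a map $\overline A_n\to\operatorname{Rec}^{+}(Z_n)$ rather than to a map in the reverse direction, and this agrees with your functoriality-plus-unit argument.
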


\begin{proof}
The unit isomorphism, box-isometry of $\operatorname{Rep}^{+}$, and box-nonexpansiveness of $\operatorname{Rec}^{+}$ follow from \Cref{thm:adjunction-direction,def:pullback-distances,prop:right-adjoint-box-nonexpansive}. Together with domination refinement for $\Lup\circ\D$ from \cite[Lemma~5.4]{gds2}, these are exactly the hypotheses of \Cref{prop:domination-refinement-descends}.
This completes the proof.
\end{proof}

\begin{corollary}[Weak limits and sequential compactness of pyramids of qm-spaces]
\label{cor:pyramid-weak-limit-and-sequential-compactness}
Let $\mathcal P_n$ be pyramids in $\Xp$. If $\mathcal P_n$ converges as a sequence of closed sets to a box-closed set $\mathcal E\subset\Xp$ in the sense of sequential Painlev\'e--Kuratowski convergence, then $\mathcal E$ is a pyramid. Moreover, every sequence of pyramids in $\Xp$ has a subsequence that converges weakly to a pyramid.
\end{corollary}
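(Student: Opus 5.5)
We derive the corollary from \Cref{thm:gds-pyramid-weak-limit-and-sequential-compactness} by transport along the adjunction $\operatorname{Rep}^{+}\dashv\operatorname{Rec}^{+}$. This uses \Cref{thm:qm-pyramid-transport}, \Cref{prop:asymmetric-domination-refinement}, the box-isometry of $\operatorname{Rep}^{+}$ (\Cref{def:pullback-distances}) and \Cref{prop:right-adjoint-box-nonexpansive}.

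\emph{Weak limits.} Suppose $\mathcal P_n\to\mathcal E$ in the Painlev\'e--Kuratowski sense. We verify the pyramid conditions of \Cref{def:qm-pyramid} directly.

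Nonemptiness holds because every pyramid contains the one-point space $*$, which is dominated by every object; hence $*\in\mathcal E$. Box-closedness is assumed.

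For downward closedness, let $A\preceq\overline A\in\mathcal E$ and choose $\overline A_n\in\mathcal P_n$ with $\overline A_n\to\overline A$. \Cref{prop:asymmetric-domination-refinement} gives $A_n\preceq\overline A_n$ with $A_n\to A$. Then $A_n\in\mathcal P_n$, and the outer condition gives $A\in\mathcal E$.

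Directedness is the main obstacle, since it requires compactness. Given $A,B\in\mathcal E$, choose $A_n,B_n\in\mathcal P_n$ converging to $A,B$ and common upper bounds $\overline C_n\in\mathcal P_n$. Pass to $\operatorname{Rep}^{+}$, which is a box-isometry and preserves domination. Apply \cite[Lemma~5.6]{gds2} with $\L=\Lup$ to get $Z_n\in\Lup\circ\D$ with
\[\operatorname{Rep}^{+}(A_n),\operatorname{Rep}^{+}(B_n)\preceq Z_n\preceq\operatorname{Rep}^{+}(\overline C_n),\]
and a subsequence $Z_{n(m)}\to Z$.

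Now set $C_n\coloneqq\operatorname{Rec}^{+}(Z_n)$. By the adjunction \Cref{thm:adjunction-direction} and the isomorphic unit, dominations are preserved. Hence $A_n,B_n\preceq C_n\preceq\overline C_n$, so $C_n\in\mathcal P_n$. Moreover $C_{n(m)}\to C\coloneqq\operatorname{Rec}^{+}(Z)$ by \Cref{prop:right-adjoint-box-nonexpansive}.

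The outer condition along the subsequence gives $C\in\mathcal E$. \Cref{prop:domination-box-closedness}, applied to the representations, gives $\operatorname{Rep}^{+}(A),\operatorname{Rep}^{+}(B)\preceq\operatorname{Rep}^{+}(C)$. By full faithfulness, $A,B\preceq C$.

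\emph{Sequential compactness.} By \Cref{cor:pullback-box-polish}, $(\Xp,\Box)$ is Polish. \cite[Lemma~2.11]{gds2} therefore extracts a subsequence of $(\mathcal P_n)$ that converges in the Painlev\'e--Kuratowski sense to some closed $\mathcal E$. The first part shows $\mathcal E$ is a pyramid, so the convergence is weak convergence of pyramids.

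As an alternative for this part, one could transport to $(\operatorname{Rep}^{+})_{\#}\mathcal P_n$ and apply the second half of \Cref{thm:gds-pyramid-weak-limit-and-sequential-compactness}. However, the limit must then be shown to lie in the image of the transport map, so the direct route above is preferable.
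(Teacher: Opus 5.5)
Your proof is correct, but it is organized differently from the paper's. The paper never checks the pyramid axioms in $\Xp$ directly. With $L=\operatorname{Rep}^{+}$ and $R=\operatorname{Rec}^{+}$, it transports the closed set $\mathcal E$ itself to $L_{\#}\mathcal E=\{X\mid R(X)\in\mathcal E\}$. It then verifies $L_{\#}\mathcal P_n\to L_{\#}\mathcal E$: the inner condition comes from \cite[Lemma~5.4]{gds2} applied to the counit domination $X\preceq LR(X)$, and the outer condition from box-nonexpansiveness of $R$. Next it invokes \Cref{thm:gds-pyramid-weak-limit-and-sequential-compactness} as a black box to see that $L_{\#}\mathcal E$ is a pyramid. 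Finally it reads off the axioms for $\mathcal E$ from the equivalence $A\in\mathcal E\iff L(A)\in L_{\#}\mathcal E$.

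For compactness, the paper extracts a weakly convergent subsequence on the gd side. It then performs exactly the step you chose to avoid: it shows that the limit $\mathcal Q$ equals $L_{\#}\mathcal P$ for $\mathcal P=\{A\mid L(A)\in\mathcal Q\}$, and concludes with the equivalence of weak convergence in \Cref{thm:abstract-pyramid-transport}.

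You instead rerun the appendix argument inside $\Xp$, using the one-point space and the descended refinement of \Cref{prop:asymmetric-domination-refinement}. You go to the gd side only to apply \cite[Lemma~5.6]{gds2}. You then correctly pull the refined upper bounds back by functoriality of $\operatorname{Rec}^{+}$, the unit isomorphism, and \Cref{prop:right-adjoint-box-nonexpansive}. Compactness then comes from separability of $(\Xp,\Box)$ in \Cref{cor:pullback-box-polish} together with hyperspace extraction.

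Your route is more elementary. It sidesteps the transport of a non-pyramid closed set and the identification $\mathcal Q=L_{\#}\mathcal P$. The paper's route keeps all compactness input on the gd side and is formal once the adjunction is pyramidal. So it would apply unchanged to any pyramidal adjunction whose codomain satisfies the weak-limit theorem. Yours, by contrast, relies on separability of $\Xp$ and on pulling back the specific refinement of \cite[Lemma~5.6]{gds2}.
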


\begin{proof}
Set $L\coloneqq\operatorname{Rep}^{+}(-)$ and $R\coloneqq\operatorname{Rec}^{+}(-)$, and for every closed set $\mathcal F\subset\Xp$, define
\[L_{\#}\mathcal F\coloneqq\{X\in\Lup\circ\D\mid R(X)\in\mathcal F\}.\]
Since $R$ is box-nonexpansive, $L_{\#}\mathcal F$ is box-closed whenever $\mathcal F$ is box-closed. The inner/outer argument from \Cref{thm:abstract-pyramid-transport} applies to $\mathcal P_n\to\mathcal E$. For the inner condition, take $X\in L_{\#}\mathcal E$. Applying the inner condition for $\mathcal P_n\to\mathcal E$ to $R(X)\in\mathcal E$ gives $A_n\in\mathcal P_n$ such that $A_n\to R(X)$, and the box-isometry of $L$ gives $L(A_n)\to LR(X)$. Apply \cite[Lemma~5.4]{gds2} to this convergence and the counit domination $X\preceq LR(X)$. The outer condition follows from the box-nonexpansiveness of $R$. Thus $L_{\#}\mathcal P_n\to L_{\#}\mathcal E$. The same theorem shows that $L_{\#}\mathcal P_n$ is a pyramid, so \Cref{thm:gds-pyramid-weak-limit-and-sequential-compactness} implies that $L_{\#}\mathcal E$ is also a pyramid. The fact that the unit is an isomorphism gives
\[A\in\mathcal E\quad\Longleftrightarrow\quad L(A)\in L_{\#}\mathcal E.\]
This equivalence implies that $\mathcal E$ is nonempty, downward closed, and box-closed. If $X\in L_{\#}\mathcal E$ is a common upper bound of $L(A)$ and $L(B)$, then $R(X)\in\mathcal E$ is a common upper bound of $A$ and $B$. Therefore, $\mathcal E$ is a pyramid.

Next, take a sequence of pyramids $\mathcal P_n$ in $\Xp$. By \Cref{thm:gds-pyramid-weak-limit-and-sequential-compactness}, after passing to a subsequence, $L_{\#}\mathcal P_n$ converges weakly to a pyramid $\mathcal Q$ in $\Lup\circ\D$. For $X\in\mathcal Q$, the inner condition gives $X_n\in L_{\#}\mathcal P_n$ such that $X_n\to X$. We have $LR(X_n)\in L_{\#}\mathcal P_n$, and the box-isometry of $L$ and the box-nonexpansiveness of $R$ give $LR(X_n)\to LR(X)$. The outer condition then gives $LR(X)\in\mathcal Q$. Set
\[\mathcal P\coloneqq\{A\in\Xp\mid L(A)\in\mathcal Q\}.\]
Since $L$ is box-isometric, $\mathcal P$ is box-closed. Taking any $X\in\mathcal Q$ gives $R(X)\in\mathcal P$, so $\mathcal P$ is nonempty, and its downward closedness follows from the functoriality of $L$ and the downward closedness of $\mathcal Q$. For $A,B\in\mathcal P$, take $X\in\mathcal Q$ such that $L(A),L(B)\preceq X$. Then $LR(X)\in\mathcal Q$ and $A,B\preceq R(X)$, so $R(X)\in\mathcal P$ is a common upper bound. Therefore, $\mathcal P$ is a pyramid. Moreover, if $X\in\mathcal Q$, then $LR(X)\in\mathcal Q$, so $R(X)\in\mathcal P$. Conversely, if $R(X)\in\mathcal P$, then $LR(X)\in\mathcal Q$, and the domination $X\preceq LR(X)$ given by the counit implies $X\in\mathcal Q$. Thus, $\mathcal Q=L_{\#}\mathcal P$. By the equivalence of weak convergence in \Cref{thm:abstract-pyramid-transport}, this subsequence converges weakly to $\mathcal P$.
This completes the proof.
\end{proof}

\subsection{Verification of the transport hypotheses}
\label{sec:transport-hypotheses}

\begin{proof}[Proof of \Cref{thm:qm-pyramid-transport}]
The three conditions later collected in \Cref{def:pyramidal-adjunction} hold. By \Cref{thm:adjunction-direction}, the unit of $\operatorname{Rep}^{+}\dashv\operatorname{Rec}^{+}$ is an isomorphism. By \Cref{def:pullback-distances,prop:right-adjoint-box-nonexpansive}, $\operatorname{Rep}^{+}$ is box-isometric and $\operatorname{Rec}^{+}$ is box-nonexpansive. Finally, \cite[Lemma~5.4]{gds2} shows that the codomain $\Lup\circ\D$ has domination refinement. Thus, in the terminology defined in that appendix, this adjunction is pyramidal. Pyramid transport, the equivalence of weak convergence, and injectivity follow from \Cref{thm:abstract-pyramid-transport}.

Box-closedness of the associated lower sets follows separately on the two sides. On the $\Lup\circ\D$ side, apply \Cref{prop:domination-box-closedness}. On the qm-space side, suppose that $Y_n\preceq X$ and $Y_n\to Y$. Then
\[
\operatorname{Rep}^{+}(Y_n)\preceq\operatorname{Rep}^{+}(X),\qquad
\operatorname{Rep}^{+}(Y_n)\longrightarrow\operatorname{Rep}^{+}(Y).
\]
Applying \Cref{prop:domination-box-closedness} again gives $\operatorname{Rep}^{+}(Y)\preceq\operatorname{Rep}^{+}(X)$. The full faithfulness of $\operatorname{Rep}^{+}$ then gives $Y\preceq X$. Thus, the lower sets on both sides are pyramids. Finally, \Cref{eq:qm-associated-pyramid-pullback} follows from \Cref{prop:pyramidal-compactification-pullback}.
This completes the proof.
\end{proof}

\subsection{Completion of the main theorem}

\begin{proof}[Completion of the proof of \Cref{thm:intro-qm-pyramidal-compactification}]
By \Cref{cor:pyramid-weak-limit-and-sequential-compactness}, every sequence in $\Pi^+$ has a weakly convergent subsequence. The metric $d_{\Pi}^{\M}$ from \Cref{thm:pyramid-ordered-measurement-criterion} metrizes weak convergence. Therefore, $(\Pi^+,d_{\Pi}^{\M})$ is sequentially compact. For a metric space this gives compactness: a Cauchy sequence with a convergent subsequence converges, so the space is complete, and a sequence admitting no finite $\varepsilon$-net would be uniformly separated and would have no convergent subsequence, so the space is totally bounded.

By \Cref{cor:pullback-box-polish}, $(\Xp,\Box)$ is complete and separable, so every pyramid in $\Xp$ is separable with respect to $\Box$. By \Cref{thm:qm-pyramid-transport}, the lower set associated with every qm-space is a pyramid. Applying \Cref{thm:associated-pyramid-sequential-density} to any $\mathcal P\in\Pi^+$ gives qm-spaces $X_n\in\mathcal P$ such that $\mathcal P(X_n)$ converges weakly to $\mathcal P$. Thus, the image of the associated-pyramid map is dense.

The qm-space case of \Cref{prop:space-pyramid-measurement-transport} gives, for every $A\in\Xp$, every positive integer $N$, and every $R>0$,
\[\M(\mathcal P(A);N,R)=\M(\operatorname{Rep}^{+}(A);N,R)=\M(A;N,R).\]

For $X,Y\in\Xp$, set $\delta\coloneqq\dconc(X,Y)$. The preceding identity and \Cref{lem:ordered-measurement-stability} with $R=N$ show that the $N$th Hausdorff term in \Cref{eq:measurement-pyramid-metric} satisfies
\[\haus{\dpr}\left(\M(X;N,N),\M(Y;N,N)\right)\leq N\delta.\]
Therefore,
\[
d_{\Pi}^{\M}(\mathcal P(X),\mathcal P(Y))
\leq\sum_{N=1}^{\infty}\frac{N\delta}{N\cdot2^N}
=\delta\sum_{N=1}^{\infty}2^{-N}
=\delta.
\]

The topology induced on the image is determined by weak convergence. By \Cref{thm:measurement-pyramid-metric}, $d_{\Pi}^{\M}$ metrizes weak convergence, and by \Cref{thm:qm-pyramid-transport}, $(\operatorname{Rep}^{+})_{\#}$ preserves and reflects weak convergence. Combining these facts with the topological embedding of $\Lup\circ\D$ into its pyramid space from \cite[Proposition~7.3]{gds2} and \Cref{eq:qm-associated-pyramid-pullback} gives, for qm-spaces $X_n,X$,
\[
\mathcal P(X_n)\longrightarrow\mathcal P(X)\text{ weakly}
\quad\Longleftrightarrow\quad
\dconc(X_n,X)\longrightarrow0.
\]
Injectivity follows from \Cref{thm:qm-pyramid-transport} and the injectivity of the embedding on $\Lup\circ\D$. Thus, the associated-pyramid map is a topological embedding. This completes the proof.
\end{proof}

\appendix

\section{Adjoint transport and compactness estimates}
\label{sec:appendix}

This appendix proves the general transport theorem, the three-layer adjunction diagram, and the compactness estimates used in the main text.

\subsection{Pyramid transport on general categories}
\label{subsec:appendix-general-pyramid-transport}

\begin{definition}[$\Box$-metrized category]
\label{def:metrized-category}
A pair $(\mathcal C,\Box_{\mathcal C})$ is called a \emph{$\Box$-metrized category} if $\mathcal C$ is a category and
\[\Box_{\mathcal C}\colon\mathcal C\times\mathcal C\longrightarrow[0,+\infty)\]
is a distance on the isomorphism classes of its objects. We write $A\preceq B$ if there exists a morphism from $B$ to $A$.
\end{definition}
When the category is clear, we write $\Box$ for $\Box_{\mathcal C}$.

\begin{definition}[Pyramid on a category {\cite[Definition~5.1]{gds2}}]
\label{def:categorical-pyramid}
Let $\mathcal C$ be a $\Box$-metrized category. A full subcategory $\mathcal P$ of $\mathcal C$ is called a \emph{pyramid} if it satisfies the following conditions.
\begin{enumerate}[label=\textup{(\roman*)}]
\item If $A\preceq B\in\mathcal P$, then $A\in\mathcal P$.
\item If $A,B\in\mathcal P$, then there exists $C\in\mathcal P$ such that $A\preceq C$ and $B\preceq C$.
\item The object set of $\mathcal P$ is nonempty and closed with respect to $\Box_{\mathcal C}$.
\end{enumerate}
For $\mathcal C\in\{\Xp,\Lup\circ\D\}$, this recovers \Cref{def:qm-pyramid}.
\end{definition}

\begin{definition}[Weak convergence of pyramids]
\label{def:pyramid-weak-convergence}
Let $\mathcal C$ be a $\Box$-metrized category. A sequence of pyramids $\mathcal P_n$ in $\mathcal C$ is said to \emph{converge weakly} to a pyramid $\mathcal P$ if it satisfies the following conditions.
\begin{enumerate}[label=\textup{(\roman*)}]
\item For every $A\in\mathcal P$, we have $\Box_{\mathcal C}(A,\mathcal P_n)\to0$.
\item For every $A\notin\mathcal P$, we have $\liminf_{n\to\infty}\Box_{\mathcal C}(A,\mathcal P_n)>0$.
\end{enumerate}
This is sequential Painlev\'e--Kuratowski convergence of the object sets, and for $\mathcal C\in\{\Xp,\Lup\circ\D\}$ it recovers \Cref{def:qm-pyramid-weak-convergence}.
\end{definition}

\begin{definition}[Domination refinement]
\label{def:domination-refinement}
A $\Box$-metrized category $\mathcal C$ is said to have \emph{domination refinement} if, whenever $A\preceq\overline A$ and $\overline A_n\to\overline A$ with respect to $\Box_{\mathcal C}$, there exist $A_n\in\mathcal C$ such that
\[A_n\preceq\overline A_n,\qquad A_n\longrightarrow A\quad\text{with respect to $\Box_{\mathcal C}$.}\]
For $\mathcal C\in\{\Xp,\Lup\circ\D\}$, this recovers \Cref{def:working-category-domination-refinement}.
\end{definition}

\begin{definition}[Pyramidal adjunction]
\label{def:pyramidal-adjunction}
An adjunction between two $\Box$-metrized categories $\mathcal B$ and $\mathcal A$,
\[L\colon\mathcal B\rightleftarrows\mathcal A\colon R,\qquad L\dashv R,\]
is called a \emph{pyramidal adjunction} if the following three conditions hold.
\begin{enumerate}[label=\textup{(\roman*)}]
\item The unit $\operatorname{id}_{\mathcal B}\to R L$ is a natural isomorphism.
\item The functor $L$ is box-isometric, and $R$ is box-nonexpansive. Explicitly,
\[
\Box_{\mathcal A}(L(b),L(b'))=\Box_{\mathcal B}(b,b')
\qquad(b,b'\in\mathcal B),
\]
and
\[
\Box_{\mathcal B}(R(A),R(A'))\leq\Box_{\mathcal A}(A,A')
\qquad(A,A'\in\mathcal A).
\]
\item The category $\mathcal A$ has domination refinement.
\end{enumerate}
\end{definition}

\begin{proposition}[Descent of domination refinement]
\label{prop:domination-refinement-descends}
Let $L\colon\mathcal B\rightleftarrows\mathcal A\colon R$ be an adjunction satisfying conditions~\textup{(i)} and~\textup{(ii)} of \Cref{def:pyramidal-adjunction}. If $\mathcal A$ has domination refinement, then $\mathcal B$ also has domination refinement.
\end{proposition}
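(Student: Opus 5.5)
Given $a\preceq\overline a$ in $\mathcal B$ and $\overline a_n\to\overline a$ with respect to $\Box_{\mathcal B}$, the plan is to move the situation to $\mathcal A$ with $L$, refine there, and return with $R$.

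\textbf{Transport to $\mathcal A$.} Since $L$ is a functor, a morphism $\overline a\to a$ gives a morphism $L(\overline a)\to L(a)$, so $L(a)\preceq L(\overline a)$. Since $L$ is box-isometric by condition~(ii),
\[\Box_{\mathcal A}(L(\overline a_n),L(\overline a))=\Box_{\mathcal B}(\overline a_n,\overline a)\to0.\]

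\textbf{Refinement in $\mathcal A$.} Domination refinement of $\mathcal A$, applied to $L(a)\preceq L(\overline a)$ and $L(\overline a_n)\to L(\overline a)$, yields objects $A_n\in\mathcal A$ with
\[A_n\preceq L(\overline a_n),\qquad A_n\to L(a)\quad\text{in }\Box_{\mathcal A}.\]

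\textbf{Return to $\mathcal B$.} Set $a_n\coloneqq R(A_n)$. Functoriality of $R$ applied to a morphism $L(\overline a_n)\to A_n$ gives a morphism $RL(\overline a_n)\to R(A_n)$. Composing with the unit $\overline a_n\to RL(\overline a_n)$ gives a morphism $\overline a_n\to a_n$, so $a_n\preceq\overline a_n$. (Alternatively, the adjunction bijection sends $L(\overline a_n)\to A_n$ to a morphism $\overline a_n\to R(A_n)$ directly.) For convergence, box-nonexpansiveness of $R$ gives
\[\Box_{\mathcal B}(a_n,RL(a))\leq\Box_{\mathcal A}(A_n,L(a))\to0.\]
By condition~(i) the unit $a\to RL(a)$ is an isomorphism, and $\Box_{\mathcal B}$ is defined on isomorphism classes, so $\Box_{\mathcal B}(a_n,a)\to0$.

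\textbf{Main obstacle.} The only delicate point is bookkeeping: the relation $\preceq$ must point in the right direction under $L$ and $R$, and the limit identification must use that the unit is invertible. Both are handled directly by the morphism directions and condition~(i), so no genuine analytic difficulty arises.
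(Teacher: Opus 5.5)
Your proposal is correct and follows the paper's proof essentially verbatim: you refine $L(a)\preceq L(\overline a)$ along $L(\overline a_n)\to L(\overline a)$ in $\mathcal A$, set $a_n\coloneqq R(A_n)$, and conclude using the unit and the box-nonexpansiveness of $R$. You also spell out steps the paper leaves implicit, namely that $L$ preserves $\preceq$ and, being box-isometric, transports the convergence; and you rightly note that $a_n\preceq\overline a_n$ needs only the unit as a morphism, while its invertibility is used only to identify $RL(a)$ with $a$ in the limit.
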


\begin{proof}
Suppose that $b\preceq\overline b$ and $\overline b_n\to\overline b$ in $\mathcal B$. By domination refinement in $\mathcal A$, take $A_n$ such that
\[A_n\preceq L(\overline b_n),\qquad A_n\longrightarrow L(b).\]
Set $b_n\coloneqq R(A_n)$. Functoriality and the fact that the unit is an isomorphism give $b_n\preceq\overline b_n$, while
\[\Box_{\mathcal B}(b_n,b)\leq\Box_{\mathcal A}(A_n,L(b))\longrightarrow0.\]
This completes the proof.
\end{proof}

Every monoidal subfamily $\L$ considered here contains $\TB$. Specializing \cite[Lemma~5.4]{gds2} shows that $\L\circ\D$ has domination refinement.

\begin{theorem}[Abstract pyramid transport]
\label{thm:abstract-pyramid-transport}
Let $L\colon\mathcal B\rightleftarrows\mathcal A\colon R$ be a pyramidal adjunction. For a pyramid $\mathcal P$ in $\mathcal B$, set
\begin{equation}
\label{eq:pyramid-transport}
L_{\#}\mathcal P\coloneqq\{A\in\mathcal A\mid R(A)\in\mathcal P\}.
\end{equation}
Then the following statements hold.
\begin{enumerate}[label=\textup{(\arabic*)}]
\item The subcategory $L_{\#}\mathcal P$ is a pyramid in $\mathcal A$ and equals the downward closure of $L[\mathcal P]$.
\item For pyramids $\mathcal P_n,\mathcal P$ in $\mathcal B$,
\[
\mathcal P_n\longrightarrow\mathcal P
\quad\Longleftrightarrow\quad
L_{\#}\mathcal P_n\longrightarrow L_{\#}\mathcal P.
\]
\item The map $\mathcal P\mapsto L_{\#}\mathcal P$ is injective on pyramids of $\mathcal B$.
\end{enumerate}
\end{theorem}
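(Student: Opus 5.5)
The plan is to prove the three statements in order, using only the axioms (i)--(iii) of \Cref{def:pyramidal-adjunction}. Two facts from the adjunction will be used throughout. The first is that the unit is an isomorphism. Hence $R(L(b))\simeq b$, and $L(b)\in L_{\#}\mathcal P$ if and only if $b\in\mathcal P$. The second is that the counit gives a morphism $L R(A)\to A$, so $A\preceq LR(A)$ for every $A\in\mathcal A$. Moreover, $R$ and $L$ preserve $\preceq$ by functoriality.

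For (1), I first show that $L_{\#}\mathcal P$ is a pyramid. It is nonempty because $L(b)\in L_{\#}\mathcal P$ for any $b\in\mathcal P$. For downward closure, suppose $A'\preceq A$ and $R(A)\in\mathcal P$. Then $R(A')\preceq R(A)$, so $R(A')\in\mathcal P$. For directedness, take $A,A'\in L_{\#}\mathcal P$ and choose an upper bound $c\in\mathcal P$ of $R(A)$ and $R(A')$. Then $A\preceq LR(A)\preceq L(c)$, and similarly for $A'$, while $L(c)\in L_{\#}\mathcal P$. Box-closedness follows because $R$ is nonexpansive, hence continuous, and $\mathcal P$ is closed. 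The same counit argument gives the description as a downward closure. Every $A\in L_{\#}\mathcal P$ satisfies $A\preceq L(R(A))$ with $R(A)\in\mathcal P$. Conversely, anything below some $L(b)$ with $b\in\mathcal P$ lies in $L_{\#}\mathcal P$ by downward closure.

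For (2), forward direction, assume $\mathcal P_n\to\mathcal P$. I check the inner condition first, and this is where domination refinement is needed; I expect it to be the main step. Take $A\in L_{\#}\mathcal P$ and set $b=R(A)\in\mathcal P$. The inner condition in $\mathcal B$ gives $b_n\in\mathcal P_n$ with $b_n\to b$. Since $L$ is isometric, $L(b_n)\to L(b)=LR(A)$. Because $A\preceq LR(A)$, domination refinement in $\mathcal A$ yields $A_n\preceq L(b_n)$ with $A_n\to A$. Then $R(A_n)\preceq RL(b_n)\simeq b_n\in\mathcal P_n$, so $A_n\in L_{\#}\mathcal P_n$. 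For the outer condition, take a subsequence and $A_k\in L_{\#}\mathcal P_{n(k)}$ with $A_k\to A$. Nonexpansiveness gives $R(A_k)\to R(A)$, and $R(A_k)\in\mathcal P_{n(k)}$, so the outer condition in $\mathcal B$ gives $R(A)\in\mathcal P$, that is, $A\in L_{\#}\mathcal P$.

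For the converse direction of (2), assume $L_{\#}\mathcal P_n\to L_{\#}\mathcal P$. Inner condition: for $b\in\mathcal P$ we have $L(b)\in L_{\#}\mathcal P$, so there are $A_n\in L_{\#}\mathcal P_n$ with $A_n\to L(b)$. Then $R(A_n)\in\mathcal P_n$ and $R(A_n)\to RL(b)\simeq b$. Outer condition: if $b_k\in\mathcal P_{n(k)}$ and $b_k\to b$, then $L(b_k)\in L_{\#}\mathcal P_{n(k)}$ and $L(b_k)\to L(b)$ by isometry. The outer condition in $\mathcal A$ gives $L(b)\in L_{\#}\mathcal P$, hence $b\in\mathcal P$. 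Finally, (3) follows from the equivalence $b\in\mathcal P\iff L(b)\in L_{\#}\mathcal P$, which recovers $\mathcal P$ from $L_{\#}\mathcal P$. Throughout, I use the sequential reformulation of condition (ii) of weak convergence noted after \Cref{def:qm-pyramid-weak-convergence}.
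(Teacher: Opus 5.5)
Your proposal is correct and follows the paper's proof essentially step by step. The counit domination $A\preceq LR(A)$ and the unit isomorphism give (1) and (3), and domination refinement applied to $A\preceq LR(A)$ and $L(b_n)\to LR(A)$ gives the forward inner condition, while the box-isometry of $L$ and the nonexpansiveness of $R$ supply the remaining inner and outer conditions. The only difference is cosmetic: the paper packages the order relations into the single equivalence $A\preceq L(b)\iff R(A)\preceq b$ obtained from the Hom-set bijection, whereas you derive the needed instances directly from functoriality together with the unit and counit.
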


\begin{proof}
The bijection of Hom-sets in the adjunction gives
\begin{equation}
\label{eq:adjunction-order}
A\preceq L(b)\quad\Longleftrightarrow\quad R(A)\preceq b.
\end{equation}
The unit, functoriality, and \Cref{eq:adjunction-order} show that $L_{\#}\mathcal P$ is nonempty, downward closed, and directed. If $A_n\in L_{\#}\mathcal P$ and $A_n\to A$, then $R(A_n)\to R(A)$, which proves box-closedness. If $A\in L_{\#}\mathcal P$, the counit gives $A\preceq L(R(A))$, and (1) follows.

Suppose that $\mathcal P_n\to\mathcal P$. For $A\in L_{\#}\mathcal P$, take $b_n\in\mathcal P_n$ with $b_n\to R(A)$. Apply domination refinement to $A\preceq L(R(A))$ and $L(b_n)\to L(R(A))$ to obtain
\[A_n\preceq L(b_n),\qquad A_n\longrightarrow A.\]
Then $A_n\in L_{\#}\mathcal P_n$, which proves the inner condition. The outer condition follows from the box-nonexpansiveness of $R$.

Conversely, suppose that $L_{\#}\mathcal P_n\to L_{\#}\mathcal P$. For $b\in\mathcal P$, take $A_n\in L_{\#}\mathcal P_n$ with $A_n\to L(b)$ and apply $R$ to obtain the inner condition. If $b_k\in\mathcal P_{n(k)}$ and $b_k\to b$, then $L(b_k)\to L(b)$. The outer condition gives $L(b)\in L_{\#}\mathcal P$, and therefore $b\in\mathcal P$. This proves (2).

If $L_{\#}\mathcal P=L_{\#}\mathcal Q$ and $b\in\mathcal P$, then $L(b)\in L_{\#}\mathcal Q$. The unit gives $b\simeq R(L(b))\in\mathcal Q$. The reverse inclusion is identical, which proves (3).
This completes the proof.
\end{proof}

\begin{proposition}
\label{prop:pyramidal-adjunction-composition}
For $i=1,2$, let
\[L_i\colon\mathcal C_{i-1}\rightleftarrows\mathcal C_i\colon R_i\]
be pyramidal adjunctions. Then the composite adjunction $L_2L_1\dashv R_1R_2$ is also pyramidal, and
\[(L_2L_1)_{\#}\mathcal P=(L_2)_{\#}(L_1)_{\#}\mathcal P.\]
The same statement holds for every finite composite.
\end{proposition}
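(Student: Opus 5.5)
The plan is to check the three conditions of \Cref{def:pyramidal-adjunction} for the composite one at a time, each from the corresponding conditions for the two factors, and then to verify the transport identity directly from \Cref{eq:pyramid-transport}.

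\textbf{The composite is an adjunction.} The standard composition of adjunctions gives natural bijections
\[
\operatorname{Hom}_{\mathcal C_2}(L_2L_1b,A)\simeq\operatorname{Hom}_{\mathcal C_1}(L_1b,R_2A)\simeq\operatorname{Hom}_{\mathcal C_0}(b,R_1R_2A).
\]
So $L_2L_1\dashv R_1R_2$, and its unit is the composite
\[
\operatorname{id}_{\mathcal C_0}\xrightarrow{\eta_1}R_1L_1\xrightarrow{R_1\eta_2L_1}R_1R_2L_2L_1 .
\]

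\textbf{Condition (i).} The unit $\eta_1$ is a natural isomorphism by hypothesis. The whiskering $R_1\eta_2L_1$ is a natural isomorphism because $\eta_2$ is one and functors preserve isomorphisms. Hence the composite unit is a natural isomorphism.

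\textbf{Condition (ii).} For $b,b'\in\mathcal C_0$,
\[
\Box_{\mathcal C_2}(L_2L_1b,L_2L_1b')=\Box_{\mathcal C_1}(L_1b,L_1b')=\Box_{\mathcal C_0}(b,b'),
\]
using the box-isometry of $L_2$ and then of $L_1$. For $A,A'\in\mathcal C_2$,
\[
\Box_{\mathcal C_0}(R_1R_2A,R_1R_2A')\leq\Box_{\mathcal C_1}(R_2A,R_2A')\leq\Box_{\mathcal C_2}(A,A'),
\]
using the box-nonexpansiveness of $R_1$ and then of $R_2$. Functors send isomorphic objects to isomorphic objects, so these distances are well defined on isomorphism classes.

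\textbf{Condition (iii).} This concerns only the codomain $\mathcal C_2$. It has domination refinement because $L_2\dashv R_2$ is pyramidal.

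\textbf{Transport identity.} For a pyramid $\mathcal P$ in $\mathcal C_0$, \Cref{thm:abstract-pyramid-transport}(1) applied to $L_1\dashv R_1$ shows that $(L_1)_{\#}\mathcal P$ is a pyramid in $\mathcal C_1$. Therefore $(L_2)_{\#}$ may be applied to it. Unwinding definitions gives
\[
A\in(L_2L_1)_{\#}\mathcal P\iff R_1(R_2A)\in\mathcal P\iff R_2A\in(L_1)_{\#}\mathcal P\iff A\in(L_2)_{\#}(L_1)_{\#}\mathcal P .
\]

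\textbf{Finite composites.} These follow by induction on the number of factors. At each step, apply the two-factor case to the composite of the first $k$ adjunctions (pyramidal by the induction hypothesis) together with the $(k+1)$-st adjunction.

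\textbf{Main obstacle.} I do not expect a genuine obstacle. The only points needing care are that the whiskered unit is again an isomorphism, and that $(L_1)_{\#}\mathcal P$ is a pyramid, so that the second transport is defined. The latter is supplied by \Cref{thm:abstract-pyramid-transport}.
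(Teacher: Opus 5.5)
Your proposal is correct and follows the same route as the paper: you check that each of the three conditions of \Cref{def:pyramidal-adjunction} survives composition, take domination refinement of $\mathcal C_2$ from the second adjunction, and obtain the transport identity because both sides reduce to the condition $R_1R_2(A)\in\mathcal P$. You simply make explicit what the paper leaves implicit, namely the whiskered unit $R_1\eta_2L_1\circ\eta_1$ and the induction for finite composites.
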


\begin{proof}
The unit, the box-isometry of the left adjoint, and the box-nonexpansiveness of the right adjoint are preserved under composition. The category $\mathcal C_2$, being the codomain of the second adjunction, has domination refinement. The displayed identity follows because both sides impose the same condition $R_1R_2(A)\in\mathcal P$.
This completes the proof.
\end{proof}

\begin{definition*}[Associated pyramid in a $\Box$-metrized category]
For a $\Box$-metrized category $\mathcal C$ and an object $C$ of $\mathcal C$, write
\[\mathcal P_{\mathcal C}(C)\coloneqq\{A\in\mathcal C\mid A\preceq C\}.\]
When this lower set is a pyramid, it is called the \emph{associated pyramid} of $C$.
\end{definition*}

\begin{proposition}[Pullback of pyramid compactifications]
\label{prop:pyramidal-compactification-pullback}
Under the assumptions of \Cref{thm:abstract-pyramid-transport}, for every $b\in\mathcal B$,
\[L_{\#}\mathcal P_{\mathcal B}(b)=\mathcal P_{\mathcal A}(L(b)).\]
Whenever these lower sets are pyramids, for $b_n\in\mathcal B$ and a pyramid $\mathcal P$ in $\mathcal B$,
\[
\mathcal P_{\mathcal B}(b_n)\longrightarrow\mathcal P
\quad\Longleftrightarrow\quad
\mathcal P_{\mathcal A}(L(b_n))\longrightarrow L_{\#}\mathcal P.
\]
\end{proposition}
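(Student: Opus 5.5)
The proof splits into the set identity $L_{\#}\mathcal P_{\mathcal B}(b)=\mathcal P_{\mathcal A}(L(b))$, which I would derive directly from the adjunction order \Cref{eq:adjunction-order}, followed by an application of \Cref{thm:abstract-pyramid-transport}(2).

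For the identity, take $A\in\mathcal A$. By \Cref{eq:pyramid-transport}, $A\in L_{\#}\mathcal P_{\mathcal B}(b)$ means $R(A)\in\mathcal P_{\mathcal B}(b)$, that is, $R(A)\preceq b$. By \Cref{eq:adjunction-order}, obtained from the Hom-set bijection of $L\dashv R$, this is equivalent to $A\preceq L(b)$. The two sets therefore coincide, and this needs neither box distances nor pyramid properties. Since isomorphic objects are identified, I would check that the condition depends only on isomorphism classes. This holds because $\preceq$ is invariant under isomorphism and $R$ is a functor.

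For the convergence statement, assume that $\mathcal P_{\mathcal B}(b_n)$ and $\mathcal P$ are pyramids in $\mathcal B$. \Cref{thm:abstract-pyramid-transport}(2) applies because the adjunction is pyramidal, and it gives
\[\mathcal P_{\mathcal B}(b_n)\longrightarrow\mathcal P\quad\Longleftrightarrow\quad L_{\#}\mathcal P_{\mathcal B}(b_n)\longrightarrow L_{\#}\mathcal P.\]
Substituting the identity from the first step turns the right-hand side into $\mathcal P_{\mathcal A}(L(b_n))\to L_{\#}\mathcal P$. By \Cref{thm:abstract-pyramid-transport}(1), $L_{\#}$ of a pyramid is a pyramid, so $\mathcal P_{\mathcal A}(L(b_n))$ is itself a pyramid. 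The lower sets on the $\mathcal A$ side are therefore pyramids as soon as those on the $\mathcal B$ side are, and both sides of the equivalence are meaningful.

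The only delicate point, and still a minor one, is the hypothesis ``whenever these lower sets are pyramids''. The transport theorem applies only to pyramids of $\mathcal B$, so the assumption has to be placed on the $\mathcal B$ side, and the $\mathcal A$-side lower sets then inherit pyramidality from \Cref{thm:abstract-pyramid-transport}(1). Apart from this, the argument is bookkeeping through the adjunction order.
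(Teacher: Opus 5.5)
Your proposal is correct and follows the paper's proof. The paper also obtains the set identity by passing through $R(A)\preceq b\Leftrightarrow A\preceq L(b)$ from \Cref{eq:adjunction-order}, and then reads off the convergence equivalence from \Cref{thm:abstract-pyramid-transport}(2). Your remark that pyramidality of $\mathcal P_{\mathcal A}(L(b_n))$ is inherited from the $\mathcal B$ side via part~(1) is a small clarification the paper leaves implicit.
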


\begin{proof}
By \Cref{eq:adjunction-order},
\[
A\in L_{\#}\mathcal P_{\mathcal B}(b)
\quad\Longleftrightarrow\quad R(A)\preceq b
\quad\Longleftrightarrow\quad A\preceq L(b).
\]
The equivalence of weak convergence follows from \Cref{thm:abstract-pyramid-transport}.
This completes the proof.
\end{proof}

\subsection{The three-layer diagram of symmetric spaces, directed spaces, and gd-sets}
\label{subsec:appendix-three-tier-adjunction}

The symmetric representation and reconstruction assignments $\operatorname{Rep}(-)$ and $\operatorname{Rec}(-)$ were defined in \Cref{def:pullback-mm-box-distance,rem:symmetric-reconstruction-bridge}. They leave the underlying maps unchanged on morphisms. The assignment $\operatorname{Rep}(-)$ is a functor from $\X$ to $\lipone(\R)\circ\D$ by \Cref{eq:symmetric-recovery}, and $\operatorname{Rec}(-)$ is a functor in the reverse direction by the definition of a gd-set. We also write
\[J(-)\colon\lipone(\R)\circ\D\longrightarrow\Lup\circ\D\]
for the inclusion functor that regards a $\lipone(\R)$-gd-set as an $\Lup$-gd-set.

\begin{proposition}[Representation--reconstruction adjunction on the symmetric side]
\label{prop:symmetric-adjunction}
There is a natural adjunction $\operatorname{Rep}\dashv\operatorname{Rec}$. Its unit is an isomorphism, and $\operatorname{Rep}$ is fully faithful.
\end{proposition}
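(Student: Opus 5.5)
The proof will mirror that of \Cref{thm:adjunction-direction}: check the hom-set bijection directly with underlying maps left unchanged, then read off the unit and full faithfulness from \Cref{eq:symmetric-recovery}.

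\emph{Step 1: functoriality.} For $\operatorname{Rep}$, if $u\colon X\to X'$ is a measure-preserving $1$-Lipschitz map of mm-spaces, then $\lipone(X')\circ u\subset\lipone(X)$. Moreover $\lipone(X)$ is pointwise closed and closed under postcomposition with $\lipone(\R)$, so $\operatorname{Rep}(X)$ is a $\lipone(\R)$-gd-set and $u$ is a domination. The gd-set axioms for $\operatorname{Rep}(X)$ (metric, completeness, separability, full support) follow from \Cref{eq:symmetric-recovery}, since $d_{\lipone(X)}=d_X$. For $\operatorname{Rec}$, let $v\colon Z\to Z'$ be a domination of $\lipone(\R)$-gd-sets. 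Taking pointwise closures gives $\overline{F_{Z'}}\circ v\subset\overline{F_Z}$. Taking suprema of $|f(z)-f(z')|$ then gives $d_{F_{Z'}}(v(z),v(z'))\le d_{F_Z}(z,z')$, where we use that the sup over $F$ and over $\overline F$ agree.

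\emph{Step 2: the bijection.} For an mm-space $Y$, a $\lipone(\R)$-gd-set $Y'$, and a measure-preserving map $u\colon Y\to Y'$, we show that
\[F_{Y'}\circ u\subset\lipone(Y)\iff d_{F_{Y'}}(u(y),u(y'))\le d_Y(y,y')\ \text{for all }y,y'.\]
For the forward direction, $|f(u(y))-f(u(y'))|\le d_Y(y,y')$ for each $f\in F_{Y'}$, and taking the sup gives the inequality. For the converse, each $f\in F_{Y'}$ is $1$-Lipschitz for $d_{F_{Y'}}$, hence $f\circ u$ is $1$-Lipschitz on $Y$. Because $\lipone(Y)$ is pointwise closed, the containment $F_{Y'}\circ u\subset\lipone(Y)$ is equivalent to being a domination $\operatorname{Rep}(Y)\to Y'$. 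So both hom-sets consist of the same measure-preserving maps. The bijection is the identity on underlying maps, and naturality in both variables is then automatic, since composition is ordinary composition of maps.

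\emph{Step 3: unit and full faithfulness.} The unit $Y\to\operatorname{Rec}(\operatorname{Rep}(Y))=(Y,d_{\lipone(Y)},\mu_Y)$ is the identity map. By \Cref{eq:symmetric-recovery} it is an isometry, hence an isomorphism of mm-spaces. A left adjoint whose unit is an isomorphism is fully faithful, which gives the last claim. One could alternatively cite \Cref{eq:i-fully-faithful} together with \Cref{thm:adjunction-direction} and $J$, but the direct argument is simpler.

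The only point needing care is the measurability and closure bookkeeping in Step 2. Domination is defined with $\overline{F_X}$, and the equality $\sup_{F}=\sup_{\overline F}$ uses that pointwise limits preserve the increment bounds. There is no real obstacle.
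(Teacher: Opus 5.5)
Your proposal is correct and follows the paper's proof. The paper likewise reduces the adjunction to the equivalence $F_Z\circ u\subset\lipone(X)\iff d_{F_Z}(u(x),u(x'))\leq d_X(x,x')$ for measure-preserving $u$, with underlying maps unchanged, and gets the unit isomorphism from \Cref{eq:symmetric-recovery}; your functoriality check and your remark that a left adjoint with invertible unit is fully faithful only spell out what the paper leaves implicit.
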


\begin{proof}
The Hom-set verification in the proof of \Cref{thm:adjunction-direction} applies with $\lipplus$, $d^+$, and $\operatorname{Rec}^{+}$ replaced by $\lipone$, $d_F$, and $\operatorname{Rec}$. Namely, for a measure-preserving map $u\colon X\to Z$,
\[
F_Z\circ u\subset\lipone(X)
\quad\Longleftrightarrow\quad
d_{F_Z}(u(x),u(x'))\leq d_X(x,x').
\]
This gives the natural bijection of Hom-sets. The unit is an isomorphism by \Cref{eq:symmetric-recovery}.
This completes the proof.
\end{proof}

Define the symmetrization functor $\operatorname{Sym}(-)\colon\Xp\to\X$ by $\operatorname{Sym}(Y)\coloneqq(Y,d_Y^{\mathrm s},\mu_Y)$.

\begin{proposition}[Symmetrization adjunction]
\label{prop:sym-adjunction}
For every $X\in\X$ and $Y\in\Xp$, there is a natural bijection that leaves the underlying maps unchanged,
\[\operatorname{Hom}_{\Xp}(I(X),Y)\simeq\operatorname{Hom}_{\X}(X,\operatorname{Sym}(Y)).\]
Thus $I\dashv\operatorname{Sym}$, and its unit is an isomorphism.
\end{proposition}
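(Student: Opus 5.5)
The bijection is obtained by checking that a measure-preserving map $u\colon X\to Y$ is a morphism on one side exactly when it is a morphism on the other. The underlying maps are left unchanged, so naturality will be automatic. Since both Hom-sets consist of measure-preserving maps with the same underlying set-theoretic data, only the Lipschitz conditions need to be compared.

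First I will verify that $\operatorname{Sym}$ is a well-defined functor. By \Cref{def:asymmetric-mm-space}\textup{(ii)}--\textup{(iii)}, the space $(Y,d_Y^{\mathrm s},\mu_Y)$ is an mm-space. If $v\colon Y\to Y'$ is $1$-Lipschitz for the directed distances, then applying the inequality in both orders gives $d_{Y'}(v(y),v(y'))\leq d_Y(y,y')\leq d_Y^{\mathrm s}(y,y')$ and the same bound for the reversed pair. Taking the maximum shows that $v$ is $1$-Lipschitz for the symmetrizations.

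Next comes the Hom-set comparison. Let $u\colon X\to Y$ be measure-preserving, where $X$ is an mm-space. If $u\in\operatorname{Hom}_{\Xp}(I(X),Y)$, then for all $x,x'$,
\[d_Y(u(x),u(x'))\leq d_X(x,x'),\]
and the same holds with $x,x'$ interchanged. Since $d_X$ is symmetric, taking the maximum gives $d_Y^{\mathrm s}(u(x),u(x'))\leq d_X(x,x')$, so $u\in\operatorname{Hom}_{\X}(X,\operatorname{Sym}(Y))$. Conversely, $d_Y\leq d_Y^{\mathrm s}$ pointwise gives the reverse implication. Because the correspondence is the identity on underlying maps, it is compatible with pre- and postcomposition, and this gives naturality in both variables. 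Hence $I\dashv\operatorname{Sym}$.

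For the unit, $X\to\operatorname{Sym}(I(X))$ is the identity map. For symmetric $d_X$ we have $\max\{d_X(x,x'),d_X(x',x)\}=d_X(x,x')$, so $\operatorname{Sym}(I(X))=X$ exactly, and the unit is an isomorphism. I see no real obstacle in any of these steps. The only points needing care are that measure preservation and Borel structure agree on both sides, which holds because $I(X)$ and $X$ carry the same metric topology, and that the identity-on-maps bijection is genuinely natural.
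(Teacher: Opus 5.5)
Your proposal is correct and follows the paper's own argument: both proofs reduce the Hom-set bijection to the observation that the two directed inequalities hold together if and only if $d_Y^{\mathrm s}(u(x),u(x'))\leq d_X(x,x')$, and both identify the unit as the identity map. You also check explicitly that $\operatorname{Sym}$ is a well-defined functor and that the measure-theoretic data match on both sides; the paper leaves these points implicit, and your additions are sound.
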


\begin{proof}
A map $u\colon X\to Y$ is $1$-Lipschitz from $I(X)$ to $Y$ if and only if
\[d_Y(u(x),u(x'))\leq d_X(x,x'),\qquad d_Y(u(x'),u(x))\leq d_X(x,x')\]
hold simultaneously. This is equivalent to $d_Y^{\mathrm s}(u(x),u(x'))\leq d_X(x,x')$. The unit $X\to\operatorname{Sym}(I(X))$ is the identity isomorphism.
This completes the proof.
\end{proof}

\begin{figure}[ht]
\centering
\begin{tikzcd}[
  column sep=8.0em,
  row sep=6.0em,
  cells={nodes={inner xsep=.35ex,inner ysep=.35ex}}
]
\X
  \arrow[r,shift left=1.2ex,"I"]
  \arrow[r,phantom,"\bot"{description,fill=none,inner sep=0pt}]
  \arrow[d,xshift=1.6ex,"\operatorname{Rep}(-)"]
  \arrow[d,phantom,"\vdash"{description,fill=none,inner sep=0pt}]
& \Xp
  \arrow[l,shift left=1.2ex,"\operatorname{Sym}"]
  \arrow[d,xshift=1.6ex,"\operatorname{Rep}^{+}(-)"]
  \arrow[d,phantom,"\vdash"{description,fill=none,inner sep=0pt}]
\\
\lipone(\R)\circ\D
  \arrow[u,xshift=-1.6ex,"\operatorname{Rec}(-)"]
& \Lup\circ\D
  \arrow[u,xshift=-1.6ex,"\operatorname{Rec}^{+}(-)"]
\end{tikzcd}
\caption{Adjunctions connecting the symmetric, directed, and gd-set layers.}
\label{fig:three-layer-adjunction}
\end{figure}

\begin{corollary}[The pyramidal property of the three-layer diagram]
\label{cor:figure-adjunctions-pyramidal}
All three adjunctions in \Cref{fig:three-layer-adjunction} are pyramidal. Therefore, every finite composite of left adjoints in the diagram is also a pyramidal adjunction.
\end{corollary}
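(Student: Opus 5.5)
The plan is to check the three conditions of \Cref{def:pyramidal-adjunction} for each of the three adjunctions in \Cref{fig:three-layer-adjunction}, namely $\operatorname{Rep}^{+}\dashv\operatorname{Rec}^{+}$, $\operatorname{Rep}\dashv\operatorname{Rec}$, and $I\dashv\operatorname{Sym}$. The statement about finite composites then follows directly from \Cref{prop:pyramidal-adjunction-composition}.

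For $\operatorname{Rep}^{+}\dashv\operatorname{Rec}^{+}$, the argument is the one already given in the proof of \Cref{thm:qm-pyramid-transport}:
\begin{itemize}
\item the unit is an isomorphism by \Cref{thm:adjunction-direction};
\item $\operatorname{Rep}^{+}$ is box-isometric by \Cref{def:pullback-distances};
\item $\operatorname{Rec}^{+}$ is box-nonexpansive by \Cref{eq:xp-box-nonexpansive};
\item $\Lup\circ\D$ has domination refinement by \cite[Lemma~5.4]{gds2}.
\end{itemize}

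The symmetric adjunction is handled the same way:
\begin{itemize}
\item the unit is an isomorphism by \Cref{prop:symmetric-adjunction};
\item $\operatorname{Rep}$ is box-isometric by \Cref{def:pullback-mm-box-distance};
\item $\operatorname{Rec}$ is box-nonexpansive by \Cref{eq:x-box-nonexpansive};
\item $\lipone(\R)\circ\D$ has domination refinement by \cite[Lemma~5.4]{gds2} with $\L=\lipone(\R)\supset\TB$, which is self-compact.
\end{itemize}

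For $I\dashv\operatorname{Sym}$, the unit is the identity isomorphism by \Cref{prop:sym-adjunction}. That $I$ is box-isometric follows from \Cref{eq:symmetric-directed-lipschitz}: $\operatorname{Rep}^{+}(I(X))=\operatorname{Rep}(X)$ as gd-sets, so $\Box(I(X),I(X'))=\Box(X,X')$ by \Cref{def:pullback-distances,def:pullback-mm-box-distance}. Domination refinement for $\Xp$ is \Cref{prop:asymmetric-domination-refinement}.

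The main obstacle is showing that $\operatorname{Sym}$ is box-nonexpansive, i.e.\ $\Box(\operatorname{Sym}Y,\operatorname{Sym}Y')\le\Box(Y,Y')$. I would go through the distortion route of \Cref{prop:right-adjoint-box-nonexpansive}. Fix a coupling $\pi$ and a closed set $S$, and set $H_S$ to be the Hausdorff $\dinf{S}$-error between $\lipplus(Y)\circ\pr_1$ and $\lipplus(Y')\circ\pr_2$. Write $\dis^{+}S$ for the distortion of $S$ with respect to the directed distances $d_Y,d_{Y'}$, and $\dis_{\mathrm s}S$ for the distortion with respect to $d_Y^{\mathrm s},d_{Y'}^{\mathrm s}$.

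The proof of \Cref{prop:right-adjoint-box-nonexpansive}, applied to $\operatorname{Rep}^{+}$ and using $\overline{\lipplus}=\lipplus$ and \Cref{eq:directed-recovery}, gives
\[
\dis^{+}S\le 2H_S.
\]
Since $|\max\{a,b\}-\max\{a',b'\}|\le\max\{|a-a'|,|b-b'|\}$, we get
\[
\dis_{\mathrm s}S\le\dis^{+}S.
\]
Now \Cref{lem:asymmetric-box-upper-by-distortion}, applied to $I(\operatorname{Sym}Y)$ and $I(\operatorname{Sym}Y')$, yields
\[
\Box(\operatorname{Sym}Y,\operatorname{Sym}Y')\le\max\{1-\pi(S),\,2H_S\}.
\]
Taking the infimum over $\pi$ and $S$ gives the claim. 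The delicate point is the isometric identification between $\X$ and its image under $I$, which is why the lemma is applied to $I(\operatorname{Sym}Y)$ and $I(\operatorname{Sym}Y')$ rather than directly in $\X$.

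Composites such as $\operatorname{Rep}^{+}\circ I$ are then pyramidal by \Cref{prop:pyramidal-adjunction-composition}.
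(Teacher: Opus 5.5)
Your proposal is correct and follows essentially the same route as the paper: the two representation--reconstruction adjunctions are handled by citing the unit isomorphisms, the pullback definitions of $\Box$, \Cref{prop:right-adjoint-box-nonexpansive}, and \cite[Lemma~5.4]{gds2}, and for $I\dashv\operatorname{Sym}$ the paper proves box-nonexpansiveness of $\operatorname{Sym}$ exactly as you do, via $\dis_{\mathrm s}S\leq\dis S\leq2H_S$ and \Cref{lem:asymmetric-box-upper-by-distortion} applied to $I(\operatorname{Sym}A)$ and $I(\operatorname{Sym}B)$ together with the box-isometry of $I$. The only cosmetic difference is that the paper re-derives $\dis S\leq2H_S$ inline instead of pointing back to the proof of \Cref{prop:right-adjoint-box-nonexpansive}; composites are handled by \Cref{prop:pyramidal-adjunction-composition} in both.
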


\begin{proof}
For $I\dashv\operatorname{Sym}$, the unit is an isomorphism by \Cref{prop:sym-adjunction}. The identity \Cref{eq:symmetric-directed-lipschitz} and the pullback definitions show that $I$ is box-isometric.

We prove the box-nonexpansiveness of $\operatorname{Sym}$. Let $A,B\in\Xp$, $\pi\in\Tplan(\mu_A,\mu_B)$, and let $S\subset A\times B$ be closed. Set
\[H_S\coloneqq\haus{\dinf{S}}(\lipplus(A)\circ\pr_1,\lipplus(B)\circ\pr_2).\]
Assume that $S\neq\emptyset$ and $H_S<+\infty$, since otherwise the estimate below is immediate. For $(a,b),(a',b')\in S$, $f\in\lipplus(A)$, and $\varepsilon>0$, choose $g\in\lipplus(B)$ whose uniform error from $f$ on $S$ is less than $H_S+\varepsilon$. Then
\[
f(a')-f(a)\leq g(b')-g(b)+2H_S+2\varepsilon
\leq d_B(b,b')+2H_S+2\varepsilon.
\]
Taking the supremum, letting $\varepsilon\downarrow0$, and interchanging $A$ and $B$ gives $\dis S\leq2H_S$. Moreover,
\[
\dis_{\mathrm s}S\coloneqq
\sup_{\substack{(a,b),(a',b')\in S}}
\bigl|d_A^{\mathrm s}(a,a')-d_B^{\mathrm s}(b,b')\bigr|
\leq\dis S,
\]
because the maximum of the two directed distances changes by at most the larger of their two errors. Apply \Cref{lem:asymmetric-box-upper-by-distortion} to $I(\operatorname{Sym}(A))$ and $I(\operatorname{Sym}(B))$. The box-isometry of $I$ gives
\[
\Box(\operatorname{Sym}(A),\operatorname{Sym}(B))
\leq\max\{1-\pi(S),\dis_{\mathrm s}S\}
\leq\max\{1-\pi(S),2H_S\}.
\]
Taking the infimum over $\pi$ and $S$ and using \Cref{def:pullback-distances,def:pullback-mm-box-distance,eq:gds-box} proves that $\operatorname{Sym}$ is box-nonexpansive. The codomain $\Xp$ has domination refinement by \Cref{prop:asymmetric-domination-refinement}.

For $\operatorname{Rep}\dashv\operatorname{Rec}$ and $\operatorname{Rep}^{+}\dashv\operatorname{Rec}^{+}$, the unit isomorphisms, box-isometry of the left adjoints, box-nonexpansiveness of the right adjoints, and domination refinement of the codomains follow from \Cref{prop:symmetric-adjunction,thm:adjunction-direction,def:pullback-distances,def:pullback-mm-box-distance,prop:right-adjoint-box-nonexpansive} and \cite[Lemma~5.4]{gds2}. Finite composites are covered by \Cref{prop:pyramidal-adjunction-composition}.
This completes the proof.
\end{proof}

By \Cref{prop:right-adjoint-box-nonexpansive}, $(\operatorname{Rep}\circ\operatorname{Rec})(-)$ is box-nonexpansive. Apply the idempotent-image argument of \Cref{cor:pullback-box-polish} with $\operatorname{Rep}$ and $\operatorname{Rec}$. There, replace $\operatorname{Rep}^{+}$, $\operatorname{Rec}^{+}$, and $\Lup\circ\D$ by $\operatorname{Rep}$, $\operatorname{Rec}$, and $\lipone(\R)\circ\D$, respectively. Using \cite[Theorem~4.10]{gds2}, we see that $(\X,\Box)$ is complete and separable.

By \Cref{cor:figure-adjunctions-pyramidal,prop:pyramidal-adjunction-composition}, the transported pyramids of a pyramid $\mathcal P$ in $\X$ are
\[
I_{\#}\mathcal P=\{A\in\Xp\mid\operatorname{Sym}(A)\in\mathcal P\},\qquad
(\operatorname{Rep}^{+}\circ I)_{\#}\mathcal P=(\operatorname{Rep}^{+})_{\#}(I_{\#}\mathcal P),
\]
and \Cref{thm:abstract-pyramid-transport} shows that both transports preserve and reflect weak convergence.

\subsection{Compactness refinements}
\label{subsec:appendix-compactness-refinements}

\begin{proof}
Take $A\in\mathcal E$ and $B\preceq A$. By the inner condition, there is a sequence $A_n\in\mathcal P_n$ such that $A_n\to A$. By \cite[Lemma~5.4]{gds2}, there are $B_n\preceq A_n$ with $B_n\to B$. Since every $\mathcal P_n$ is downward closed, $B_n\in\mathcal P_n$, and the outer condition gives $B\in\mathcal E$. Thus $\mathcal E$ is downward closed.

Let $A,B\in\mathcal E$. By the inner condition, choose $A_n,B_n\in\mathcal P_n$ that box-converge to $A,B$, respectively. Since $\mathcal P_n$ is directed, there exists $\overline C_n\in\mathcal P_n$ such that $A_n,B_n\preceq\overline C_n$. By \cite[Lemma~5.6]{gds2}, there are objects $C_n$ and a subsequence $C_{n(k)}\to C$ such that
\[A_n\preceq C_n,\qquad B_n\preceq C_n,\qquad C_n\preceq\overline C_n.\]
Then $C_n\in\mathcal P_n$. Applying \Cref{prop:domination-box-closedness} twice gives $A,B\preceq C$, and the outer condition gives $C\in\mathcal E$. Thus $\mathcal E$ is directed.

The category $\L\circ\D$ has a one-point object $\mathbf 1$ dominated by every object. It is obtained by taking the family of all constant functions on a one-point set. Since $\TB\subset\L$, the family $\overline{F_X}$ contains all constant functions for each $X\in\L\circ\D$, and the constant map is a domination. Therefore, $\mathbf 1\in\mathcal P_n$. If $\mathbf 1\notin\mathcal E$, then
\[\Box(\mathbf 1,\mathcal P_n)=0,\]
contrary to the outer condition. Thus $\mathcal E$ is nonempty. It is box-closed by assumption, and hence \Cref{def:categorical-pyramid} shows that $\mathcal E$ is a pyramid.

The metric space $(\L\circ\D,\Box)$ is complete and separable \cite[Theorem~4.10]{gds2}. Every sequence of closed subsets of a complete separable metric space has a subsequence that converges to a closed set in the sense of sequential Painlev\'e--Kuratowski convergence \cite[Theorem~5.2.12]{beer1993topologies}\cite[Lemma~2.11]{gds2}. The preceding argument shows that this closed set is a pyramid. This completes the proof.
\end{proof}

\subsection{Sequential density in a general \texorpdfstring{$\Box$}{box}-metrized category}

\begin{theorem}[Sequential density of associated pyramids]
\label{thm:associated-pyramid-sequential-density}
Let $(\mathcal C,\Box)$ be a $\Box$-metrized category satisfying the following two conditions.
\begin{enumerate}[label=\textup{(\roman*)}]
\item Every pyramid in $\mathcal C$ is separable with respect to $\Box$.
\item The associated lower set $\mathcal P_{\mathcal C}(C)$ is a pyramid for every object $C$ of $\mathcal C$.
\end{enumerate}
Then, for every pyramid $\mathcal P$ in $\mathcal C$, there exists a sequence of objects $Z_n\in\mathcal P$ such that $\mathcal P_{\mathcal C}(Z_n)$ converges weakly to $\mathcal P$. Consequently, associated pyramids are sequentially dense in the space of all pyramids in $\mathcal C$.
\end{theorem}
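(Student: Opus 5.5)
Take a countable $\Box$-dense subset $\{D_k\}_{k\ge1}$ of $\mathcal P$; it exists by hypothesis (i). Using directedness of $\mathcal P$ (condition (ii) of \Cref{def:categorical-pyramid}) inductively, choose $Z_1\coloneqq D_1$ and, for $n\ge2$, an object $Z_n\in\mathcal P$ with $Z_{n-1}\preceq Z_n$ and $D_n\preceq Z_n$. By transitivity of $\preceq$ (composition of morphisms), $D_k\preceq Z_n$ for all $k\le n$. Hypothesis (ii) makes each $\mathcal P_{\mathcal C}(Z_n)$ a pyramid. Since $\mathcal P$ is downward closed and $Z_n\in\mathcal P$, we get
\[\mathcal P_{\mathcal C}(Z_1)\subset\mathcal P_{\mathcal C}(Z_2)\subset\cdots\subset\mathcal P.\]
I then verify the two conditions of \Cref{def:pyramid-weak-convergence}.

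\emph{Inner condition.} Let $A\in\mathcal P$ and $\varepsilon>0$. Choose $k$ with $\Box(A,D_k)<\varepsilon$. For every $n\ge k$ we have $D_k\in\mathcal P_{\mathcal C}(Z_n)$, hence
\[\Box(A,\mathcal P_{\mathcal C}(Z_n))\le\Box(A,D_k)<\varepsilon .\]
Therefore $\Box(A,\mathcal P_{\mathcal C}(Z_n))\to0$.

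\emph{Outer condition.} Let $A\notin\mathcal P$. Since $\mathcal P$ is $\Box$-closed, $\delta\coloneqq\Box(A,\mathcal P)>0$. Each $\mathcal P_{\mathcal C}(Z_n)\subset\mathcal P$, so $\Box(A,\mathcal P_{\mathcal C}(Z_n))\ge\delta$ for all $n$, and the $\liminf$ is positive. The increasing chain is what makes this step immediate.

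The step needing most care is the inner condition. It relies on the monotonicity of the chain, so each $D_k$ stays in all later lower sets, and on transitivity of $\preceq$, which holds because $\preceq$ is defined by existence of morphisms in a category. I expect no real obstacle beyond this.

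For the consequence, in the metric setting of \Cref{thm:pyramid-ordered-measurement-criterion} weak convergence is metrized. So every pyramid is a limit of associated pyramids of objects, which gives sequential density of the image of $C\mapsto\mathcal P_{\mathcal C}(C)$.
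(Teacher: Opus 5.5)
Your proposal is correct and is the paper's argument: take a $\Box$-dense sequence in $\mathcal P$, use directedness to build an increasing chain $Z_n\in\mathcal P$ dominating the first $n$ dense elements, obtain the inner condition from density, and obtain the outer condition from $\mathcal P_{\mathcal C}(Z_n)\subset\mathcal P$ together with the $\Box$-closedness of $\mathcal P$. One small remark: your appeal to metrization in the final paragraph is unnecessary, and it does not apply to a general $\Box$-metrized category. Sequential density there simply means density with respect to weak convergence, which the first part already establishes.
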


\begin{proof}
Take a box-dense sequence $Y_1,Y_2,\ldots$ in $\mathcal P$. Set $Z_1\coloneqq Y_1$. For $n\geq2$, use directedness inductively to choose $Z_n\in\mathcal P$ such that
\[Z_{n-1}\preceq Z_n,\qquad Y_n\preceq Z_n.\]
Then $Y_j\preceq Z_n$ for $j\leq n$, and $\mathcal P_{\mathcal C}(Z_n)\subset\mathcal P$.

For $A\in\mathcal P$ and $\varepsilon>0$, choose $N$ such that $\Box(A,Y_N)<\varepsilon$. For every $n\geq N$,
\[\Box(A,\mathcal P_{\mathcal C}(Z_n))<\varepsilon.\]
On the other hand, if $A\notin\mathcal P$, then the box-closedness of $\mathcal P$ and the preceding inclusion give
\[\Box(A,\mathcal P_{\mathcal C}(Z_n))\geq\Box(A,\mathcal P)>0.\]
Thus the inner and outer conditions for weak convergence hold.
This completes the proof.
\end{proof}

\section{Ordered measurements over gd-sets}
\label{sec:appendix-ordered-measurements}

This appendix gives the closedness, convergence criterion, and metric construction for the ordered measurements defined below on an arbitrary $\L\circ\D$. The main text uses only the case $\L=\Lup$ pulled back to qm-spaces.

\begin{definition*}[Ordered measurements of gd-sets]
For a gd-set $X$, define its $(N,R)$-measurement by the formula in \Cref{def:ordered-measurement}, with $\lipplus(X)$ replaced by $F_X$. For a family $\mathcal E$ of gd-sets, take the same union over $X\in\mathcal E$.
\end{definition*}

\begin{definition*}[Unordered measurements of gd-sets]
For a Borel probability measure $\nu$ on $\R^N$, set
\[\mathsf B_N(\nu)\coloneqq(\supp\nu,\{\pr_1,\ldots,\pr_N\},\nu)\]
and write
\[\DM(N,R)\coloneqq\{[\mathsf B_N(\nu)]\mid\nu\in\M(N,R)\}.\]
The map that forgets the order and repetitions of the coordinates is
\[
q_{N,R}\colon\M(N,R)\longrightarrow\DM(N,R),\qquad
q_{N,R}(\nu)\coloneqq[\mathsf B_N(\nu)].
\]
For a gd-set $X$, set
\[
\DM(X;N,R)\coloneqq\{q_{N,R}((b_R\circ f_1,\ldots,b_R\circ f_N)_*\mu_X)\mid(f_1,\ldots,f_N)\in\overline{F_X}^{\,N}\},
\]
and for a family of gd-sets take the union.
\end{definition*}

\begin{lemma}
\label{lem:ordered-to-feature-measurement}
For any $\mu,\nu\in\M(N,R)$, we have
\[\Box(q_{N,R}(\mu),q_{N,R}(\nu))\leq2\dpr(\mu,\nu).\]
Moreover, for any gd-set $X$,
\begin{equation}
\label{eq:ordered-feature-closure}
\overline{q_{N,R}[\M(X;N,R)]}^{\Box}=\overline{\DM(X;N,R)}^{\Box}.
\end{equation}
For gd-sets $X,Y$,
\begin{equation}
\label{eq:ordered-controls-feature}
\haus{\Box}(\DM(X;N,R),\DM(Y;N,R))
\leq2\haus{\dpr}(\M(X;N,R),\M(Y;N,R)).
\end{equation}
Furthermore, for nonempty families $\mathcal E,\mathcal E'$ of gd-sets,
\begin{align}
\overline{q_{N,R}[\M(\mathcal E;N,R)]}^{\Box}
&=\overline{\DM(\mathcal E;N,R)}^{\Box},\label{eq:family-measurement-closure}\\
\haus{\Box}(\DM(\mathcal E;N,R),\DM(\mathcal E';N,R))
&\leq2\haus{\dpr}(\M(\mathcal E;N,R),\M(\mathcal E';N,R)).\label{eq:family-measurement-comparison}
\end{align}
\end{lemma}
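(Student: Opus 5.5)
The plan is to prove the first inequality directly by a coupling argument, and then derive everything else from it. The closure identities come from density of $F_X$ in $\overline{F_X}$, and the Hausdorff estimates from the fact that $q_{N,R}$ is $2$-Lipschitz.

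\emph{Step 1: the Lipschitz bound for $q_{N,R}$.}
\begin{itemize}
\item Let $\varepsilon>\dpr(\mu,\nu)$. By Strassen's theorem for the Prokhorov distance on the compact cube $[-R,R]^N$ with the $\ell^\infty$ metric, there is $\pi\in\Tplan(\mu,\nu)$ with $\pi(\|x-y\|_\infty>\varepsilon)\leq\varepsilon$.
\item Define the closed set $S\coloneqq\{(x,y)\in\supp\mu\times\supp\nu\mid\|x-y\|_\infty\leq\varepsilon\}$. Since $\pi$ is concentrated on $\supp\mu\times\supp\nu$, we get $1-\pi(S)\leq\varepsilon$.
\item The feature family of $\mathsf B_N(\mu)$ is the finite set $\{\pr_1,\ldots,\pr_N\}$, and a finite set is its own pointwise closure. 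So in \Cref{eq:gds-box} the Hausdorff term compares $\pr_i\circ\pr_1$ with $\pr_i\circ\pr_2$ for the same index $i$.
\item On $S$ these two functions differ by at most $\|x-y\|_\infty\leq\varepsilon$. Hence $2\haus{\dinf{S}}\leq2\varepsilon$.
\item \Cref{eq:gds-box} then gives $\Box(q_{N,R}(\mu),q_{N,R}(\nu))\leq\max\{\varepsilon,2\varepsilon\}=2\varepsilon$. Letting $\varepsilon\downarrow\dpr(\mu,\nu)$ proves the first inequality.
\end{itemize}

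\emph{Step 2: the closure identity \Cref{eq:ordered-feature-closure}.}
\begin{itemize}
\item Since $F_X\subset\overline{F_X}$, we have $q_{N,R}[\M(X;N,R)]\subset\DM(X;N,R)$. This gives one inclusion of the closures.
\item For the reverse inclusion, fix $(f_1,\ldots,f_N)\in\overline{F_X}^{\,N}$. Every member of $F_X$ is $1$-Lipschitz for $d_{F_X}$, and $X$ is separable. Equicontinuity therefore lets pointwise closure points be reached by sequences: take a countable dense set, use a diagonal argument, and extend by equicontinuity.
\item So we may choose $f_i^{(k)}\in F_X$ with $f_i^{(k)}\to f_i$ pointwise. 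By \cite[Proposition~3.10]{gds1}, this implies $\kf^X$-convergence.
\item The map $b_R$ is $1$-Lipschitz. Hence the $\ell^\infty$ Ky Fan distance between the vector maps $(b_R\circ f_i^{(k)})_i$ and $(b_R\circ f_i)_i$ tends to $0$.
\item Ky Fan distance dominates the Prokhorov distance of pushforwards. Therefore the ordered measurements converge in $\dpr$, and by Step 1 their images under $q_{N,R}$ converge in $\Box$. This gives the reverse inclusion.
\end{itemize}
The main obstacle in this lemma is this step: reducing pointwise closure to sequential convergence so that \cite[Proposition~3.10]{gds1} applies. I would handle it with the equicontinuity argument just described.

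\emph{Step 3: the Hausdorff estimate \Cref{eq:ordered-controls-feature}.}
\begin{itemize}
\item By Step 1, $q_{N,R}$ is $2$-Lipschitz. Images under a $2$-Lipschitz map satisfy $\haus{\Box}(q[A],q[B])\leq2\haus{\dpr}(A,B)$.
\item Hausdorff distance is unchanged when sets are replaced by their closures. Combining this with \Cref{eq:ordered-feature-closure} gives \Cref{eq:ordered-controls-feature}.
\end{itemize}

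\emph{Step 4: the family versions.}
\begin{itemize}
\item Both $\M(\mathcal E;N,R)$ and $\DM(\mathcal E;N,R)$ are unions over $X\in\mathcal E$. The map $q_{N,R}$ commutes with unions.
\item The closure of a union equals the closure of the union of the closures. Applying \Cref{eq:ordered-feature-closure} termwise therefore yields \Cref{eq:family-measurement-closure}.
\item \Cref{eq:family-measurement-comparison} then follows exactly as in Step 3, from the $2$-Lipschitz property of $q_{N,R}$ together with \Cref{eq:family-measurement-closure}.
\end{itemize}
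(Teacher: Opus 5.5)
Your proposal is correct and follows the paper's proof essentially step for step. You use the Strassen coupling with the closed set $S$ and same-index pairing of coordinates for the first inequality, and sequential pointwise approximation from $F_X$ for the closure identity. The remaining estimates come from the $2$-Lipschitz property of $q_{N,R}$ together with invariance of the Hausdorff distance under closures.

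There are only two minor deviations, both valid. You use directly that the finite coordinate family is pointwise closed, whereas the paper proves a general identity for $\overline{F|_A}$ and then specializes. You also reach $\dpr$-convergence through Ky Fan convergence, whereas the paper uses dominated convergence and the fact that $\dpr$ metrizes weak convergence.
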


\begin{lemma}
\label{lem:general-ordered-measurement-stability}
For every positive integer $N$, every $R>0$, and all gd-sets $X,Y$, we have
\begin{align}
\haus{\dpr}(\M(X;N,R),\M(Y;N,R))&\leq N\dconc(X,Y),\label{eq:general-dconc-controls-ordered}\\
\haus{\dpr}(\M(X;N,R),\M(Y;N,R))&\leq\Box(X,Y).\label{eq:general-box-controls-ordered}
\end{align}
\end{lemma}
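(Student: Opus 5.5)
We prove the two estimates of \Cref{lem:general-ordered-measurement-stability} separately. Each time we fix a coupling of $\mu_X$ and $\mu_Y$ that is nearly optimal for the relevant distance. We then match a tuple $(f_1,\ldots,f_N)\in\overline{F_X}^{\,N}$ coordinatewise with a tuple $(g_1,\ldots,g_N)\in\overline{F_Y}^{\,N}$, and compare the two pushforwards through the coupling. We first note that the measurement sets are unchanged if $F_X$ is replaced by $\overline{F_X}$. Indeed, pointwise convergence of $f^{(k)}\to f$ gives weak convergence of the pushforwards by dominated convergence. Since the Hausdorff distance is insensitive to closure, we may work with closed families throughout.

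The basic tool is the following. Let $\pi$ be a coupling and let $h_i=f_i\circ\pr_1$, $k_i=g_i\circ\pr_2$ on $X\times Y$. The pushforwards $(b_R\circ f)_*\mu_X$ and $(b_R\circ g)_*\mu_Y$ equal the $\pi$-pushforwards of $b_R\circ h$ and $b_R\circ k$. Also, $b_R$ is $1$-Lipschitz, and the metric on $[-R,R]^N$ is $\ell^\infty$. Hence, if $\pi(\exists i: |h_i-k_i|>\varepsilon_i)\leq\sum_i\varepsilon_i$, then the Prokhorov distance is at most $\max\{\max_i\varepsilon_i,\sum_i\varepsilon_i\}$. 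This uses the standard bound $\dpr(\Phi_*\pi,\Psi_*\pi)\leq\kf^\pi(\Phi,\Psi)$ together with a union bound for the sup norm.

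For \Cref{eq:general-dconc-controls-ordered}, take $\delta>\dconc(X,Y)$. By \Cref{def:gds-distances} and \cite[Theorem~4.6]{gds1}, choose $\pi$ with
\[\haus{\kf^\pi}(F_X\circ\pr_1,F_Y\circ\pr_2)<\delta.\]
For each $f_i$ pick $g_i\in F_Y$ with $\kf^\pi(f_i\circ\pr_1,g_i\circ\pr_2)<\delta$, so that $\pi(|h_i-k_i|>\delta)\leq\delta$. The union bound gives $\pi(\max_i|h_i-k_i|>\delta)\leq N\delta$. So the pushforwards are within Prokhorov distance $\max\{\delta,N\delta\}=N\delta$. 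By symmetry this yields the Hausdorff bound, and letting $\delta\downarrow\dconc(X,Y)$ finishes this case.

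For \Cref{eq:general-box-controls-ordered}, take $\delta>\Box(X,Y)$ and choose $\pi$ and a closed $S$ as in \Cref{eq:gds-box}. Then $\pi(S)\geq1-\delta$ and
\[\haus{\dinf{S}}(\overline{F_X}\circ\pr_1,\overline{F_Y}\circ\pr_2)<\delta/2.\]
For each $f_i\in\overline{F_X}$ pick $g_i\in\overline{F_Y}$ with $|f_i(x)-g_i(y)|<\delta/2$ on $S$. Here the box distance controls all coordinates \emph{simultaneously on the single set} $S$, so there is no factor $N$. The map $\max_i|b_R\circ h_i-b_R\circ k_i|$ is below $\delta/2$ off a set of $\pi$-measure at most $\delta$. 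Hence the Prokhorov distance is at most $\delta$, and by symmetry and $\delta\downarrow\Box(X,Y)$ we obtain the bound.

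The main point needing care is the Prokhorov--Ky Fan comparison with the $\ell^\infty$ metric and clipping, together with the closure remark that allows using $\overline{F_X}$ in the box case while the measurement is defined through $F_X$. Both are routine. The only structural difference between the two estimates is the union bound, which produces the factor $N$ in the concentration case.
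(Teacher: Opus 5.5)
Your proposal is correct and follows the paper's proof essentially step for step. It uses a near-optimal coupling and coordinatewise matching of features, with a union bound producing the factor $N$ in the concentration case and a single closed set $S$ controlling all coordinates at once in the box case; the only cosmetic difference is that the paper bounds the Prokhorov distance through $\varepsilon$-subtransport plans rather than the Ky Fan--Prokhorov inequality. The one step you call routine, namely that each element of $\overline{F_Y}$ is the limit of a \emph{sequence} in $F_Y$ (needed for dominated convergence), is what the paper supplies via \Cref{lem:pointwise-closure-sequential}, using that the features are $1$-Lipschitz on a separable space.
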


\begin{lemma}
\label{lem:l-pyramid-measurement-closed}
Let $\TB\subset\L\subset\lipone(\R)$ be a monoidal subfamily, and let $\mathcal P$ be a pyramid in $\L\circ\D$. For every positive integer $N$ and every $R>0$,
\begin{equation}
\label{eq:l-ordered-feature-preimage}
\M(\mathcal P;N,R)=q_{N,R}^{-1}(\DM(\mathcal P;N,R)),
\end{equation}
and this set is compact with respect to $\dpr$.
\end{lemma}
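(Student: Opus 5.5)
The plan is to prove the set identity \Cref{eq:l-ordered-feature-preimage} directly from the definitions, and then obtain compactness by pulling back the compactness of the unordered measurement set from \cite[Lemma~7.1]{gds2} along the continuous map $q_{N,R}$.

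First, a normalization. Isomorphism in $\D$ is defined through the closures $\overline{F_X}$. Hence every $X\in\mathcal P$ may be replaced by the isomorphic gd-set $(X,\overline{F_X},\mu_X)$. After this replacement, the ordered measurement $\M(X;N,R)$, which uses $F_X$, is built from exactly the same features as $\DM(X;N,R)$, which uses $\overline{F_X}$. With this normalization the inclusion $\M(\mathcal P;N,R)\subset q_{N,R}^{-1}(\DM(\mathcal P;N,R))$ is immediate. Indeed, if $\nu=(b_R\circ f_1,\ldots,b_R\circ f_N)_*\mu_X$ with $X\in\mathcal P$, then $q_{N,R}(\nu)$ is one of the generators of $\DM(X;N,R)$.

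For the reverse inclusion, take $\nu\in\M(N,R)$ with $q_{N,R}(\nu)\in\DM(\mathcal P;N,R)$. Then there are $X\in\mathcal P$ and $f_i\in\overline{F_X}$ with $\mathsf B_N(\nu)\simeq\mathsf B_N(\nu')$, where $\nu'\coloneqq(b_R\circ f_1,\ldots,b_R\circ f_N)_*\mu_X$. Let $u\colon\supp\nu\to\supp\nu'$ be a gd-set isomorphism. The feature family $\{\pr_1,\ldots,\pr_N\}$ is finite, so it is already pointwise closed. The isomorphism condition therefore says that each $\pr_j\circ u$ equals some $\pr_{\sigma(j)}$ on $\supp\nu$, for a map $\sigma\colon\{1,\ldots,N\}\to\{1,\ldots,N\}$ that need not be injective because of repetitions. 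Hence $u$ is the restriction of the coordinate-reindexing map $\R^N\to\R^N$ induced by $\sigma$. Using $u_*\nu=\nu'$, I would turn this into the identity
\[\nu=(b_R\circ f_{\tau(1)},\ldots,b_R\circ f_{\tau(N)})_*\mu_X\]
for a suitable index map $\tau$, obtained by running the same argument with $u^{-1}$. The bookkeeping here, namely checking that the reindexing goes in the right direction and that repeated coordinates cause no trouble, is where I expect most of the care to be needed. Once the identity holds, $\nu\in\M(X;N,R)\subset\M(\mathcal P;N,R)$.

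For compactness, the space $\M(N,R)$ is $\dpr$-compact by Prokhorov's theorem, since $[-R,R]^N$ is compact. By \Cref{lem:ordered-to-feature-measurement}, $q_{N,R}$ is $2$-Lipschitz, so the preimage of a $\Box$-closed set is $\dpr$-closed. Compactness therefore reduces to showing that $\DM(\mathcal P;N,R)$ is $\Box$-closed. This follows from \cite[Lemma~7.1]{gds2} once $\DM(\mathcal P;N,R)$ is identified with the unordered $(N,R)$-feature-measurement set of $\mathcal P$. That identification is the main obstacle. The upstream set allows objects merely dominated by members of $\mathcal P$, but these already lie in $\mathcal P$ by downward closedness. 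It also allows at most $N$ features, whereas $\DM$ uses ordered $N$-tuples; the two agree because $q_{N,R}$ forgets order and repetitions, so padding a shorter list by repeating one feature gives the same class. With the identification in hand, $\DM(\mathcal P;N,R)$ is compact, hence closed, and $\M(\mathcal P;N,R)$ is a closed subset of the compact space $\M(N,R)$, hence compact.
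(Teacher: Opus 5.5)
Your proof is correct. The compactness half is the paper's own argument: $\DM(\mathcal P;N,R)$ is compact, hence closed, by \cite[Lemma~7.1]{gds2}; $q_{N,R}$ is continuous by \Cref{lem:ordered-to-feature-measurement}; and $\M(N,R)$ is compact. Your explicit matching of $\DM(\mathcal P;N,R)$ with the upstream unordered set, via downward closedness and padding by a repeated feature, fills in a step the paper leaves implicit.

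For the reverse inclusion in \Cref{eq:l-ordered-feature-preimage} you take a genuinely different route. The paper never looks inside the isomorphism $\mathsf B_N(\nu)\simeq\mathsf B_N(\nu')$. It notes that the saturation $\L\circ\mathsf B_N(\nu)$ is dominated by $X$, using $b_R\in\TB\subset\L$ and the $\L$-closedness of $X$, so it lies in $\mathcal P$ by downward closedness. Since $\operatorname{id}_{\R}\in\L$, the coordinates are features of that saturation, and $\nu$ is one of its ordered measurements. You instead show that an isomorphism between two finite-coordinate gd-sets is a coordinate reindexing, so $\nu$ is a reindexed measurement of the same $X$.

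The bookkeeping you worried about is harmless and needs no inverse. The containment $\{\pr_i\}\subset\{\pr_j\circ u\}$ of function families on $\supp\nu$ gives an index map $\tau$ with $x=T_\tau(u(x))$ there, where $T_\tau(y)\coloneqq(y_{\tau(1)},\ldots,y_{\tau(N)})$. Hence $\nu=(T_\tau)_*\nu'=(b_R\circ f_{\tau(1)},\ldots,b_R\circ f_{\tau(N)})_*\mu_X$ directly, and the possible non-injectivity of $\tau$ is irrelevant.

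What each route buys: yours is more elementary and shows the identity already holds for the closed-family representative $(X,\overline{F_X},\mu_X)$ of any single gd-set, so the pyramid axioms and $\TB\subset\L$ enter only through compactness. The paper's saturation device avoids any analysis of isomorphisms and is the same trick it reuses in \Cref{prop:space-pyramid-measurement-transport} and \Cref{lem:l-ordered-feature-bilipschitz}. Both arguments need the union defining $\M(\mathcal P;N,R)$ to range over suitable representatives: yours over $(X,\overline{F_X},\mu_X)$, the paper's over $\L\circ\mathsf B_N(\nu)$. Your normalization step makes this explicit.
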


\begin{lemma}
\label{lem:pointwise-closure-sequential}
Let $\mathcal H$ be a family of $1$-Lipschitz functions on a separable metric space $(Z,d)$. Then every element of the pointwise closure $\overline{\mathcal H}$ is the pointwise limit of a sequence in $\mathcal H$.
\end{lemma}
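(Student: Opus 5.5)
The statement to prove is \Cref{lem:pointwise-closure-sequential}: every element of the pointwise closure $\overline{\mathcal H}$ of a family $\mathcal H$ of $1$-Lipschitz functions on a separable metric space $(Z,d)$ is a pointwise limit of a sequence from $\mathcal H$. The plan is to reduce pointwise convergence on $Z$ to pointwise convergence on a countable dense set $D=\{z_1,z_2,\ldots\}\subset Z$, using equicontinuity, and then build the sequence diagonally.

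\emph{Step 1: the limit is $1$-Lipschitz.} Let $h\in\overline{\mathcal H}$. For fixed $z,z'$, the inequality $|g(z)-g(z')|\leq d(z,z')$ involves only two coordinates and is preserved under pointwise limits. Hence $h$ is also $1$-Lipschitz.

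\emph{Step 2: choose the sequence.} For each $n$, consider the basic pointwise neighbourhood of $h$ given by the finitely many points $z_1,\ldots,z_n$ and tolerance $1/n$. Since $h$ lies in the pointwise closure, this neighbourhood meets $\mathcal H$, so we can pick $h_n\in\mathcal H$ with
\[
|h_n(z_i)-h(z_i)|<1/n\qquad(i=1,\ldots,n).
\]
Then $h_n(z_i)\to h(z_i)$ for every fixed $i$, because the condition holds for all $n\geq i$.

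\emph{Step 3: extend from $D$ to all of $Z$.} Fix $z\in Z$ and $\varepsilon>0$, and choose $z_i\in D$ with $d(z,z_i)<\varepsilon$. Since both $h_n$ and $h$ are $1$-Lipschitz,
\[
|h_n(z)-h(z)|\leq|h_n(z)-h_n(z_i)|+|h_n(z_i)-h(z_i)|+|h(z_i)-h(z)|\leq2\varepsilon+|h_n(z_i)-h(z_i)|.
\]
Taking $\limsup_{n\to\infty}$ gives $\limsup_n|h_n(z)-h(z)|\leq2\varepsilon$. As $\varepsilon$ is arbitrary, $h_n\to h$ pointwise on $Z$.

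There is no real obstacle here. The only point needing care is that the uniform Lipschitz bound applies to the limit $h$ as well as to the $h_n$; Step 1 supplies this. The degenerate case $Z=\emptyset$ is trivial, and separability guarantees the countable dense set $D$ exists.
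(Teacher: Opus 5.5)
Your proof is correct. It rests on the same estimate as the paper's, namely $|h(z)-k(z)|\leq2\varepsilon+|h(z_j)-k(z_j)|$ for $1$-Lipschitz $h,k$ and $d(z,z_j)<\varepsilon$, but the two proofs use it differently:
\begin{itemize}
\item The paper first treats a pointwise bounded family and later reduces to the subfamily $\mathcal H_h=\{g\in\mathcal H\mid |g(z_0)-h(z_0)|\leq1\}$. It uses the estimate to argue that the pointwise topology on $\overline{\mathcal H}$ is induced by the metric $\rho(h,k)=\sum_j2^{-j}\min\{1,|h(z_j)-k(z_j)|\}$, and then appeals to metrizability.
\item You build the sequence explicitly from the basic neighbourhoods determined by $z_1,\ldots,z_n$ with tolerance $1/n$.
\end{itemize}
Your route is more elementary and needs no boundedness reduction. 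It also avoids the small unstated step of passing from ``convergence on $D$ is equivalent to pointwise convergence on $Z$'' for sequences to equality of the topologies. The paper's route gives the slightly stronger structural fact that the pointwise topology on a family of $1$-Lipschitz functions on a separable space is metrizable.
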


\begin{proof}
Let $\mathcal H$ be pointwise bounded, and let $D=\{z_j\}_{j=1}^{\infty}$ be dense in $Z$. Its pointwise closure $\overline{\mathcal H}$ is again pointwise bounded and consists of $1$-Lipschitz functions. If $h,k\in\overline{\mathcal H}$ and $d(z,z_j)<\varepsilon$, then
\[|h(z)-k(z)|\leq2\varepsilon+|h(z_j)-k(z_j)|.\]
Thus, convergence on $D$ is equivalent to pointwise convergence on $Z$, and the pointwise topology on $\overline{\mathcal H}$ is induced by the metric
\[\rho(h,k)\coloneqq\sum_{j=1}^{\infty}2^{-j}\min\{1,|h(z_j)-k(z_j)|\}.\]
It follows that every element of $\overline{\mathcal H}$ is the pointwise limit of a sequence in $\mathcal H$. For a family $\mathcal H$ that is not assumed to be pointwise bounded and a prescribed $h\in\overline{\mathcal H}$, fix $z_0\in Z$ and set $\mathcal H_h\coloneqq\{g\in\mathcal H\mid |g(z_0)-h(z_0)|\leq1\}$. Every pointwise neighborhood of $h$ can be refined by also controlling the value at $z_0$, so $h\in\overline{\mathcal H_h}$, while
\[|g(z)|\leq |h(z_0)|+1+d(z,z_0)\qquad(g\in\mathcal H_h).\]
Therefore, $\mathcal H_h$ is pointwise bounded, and the same sequentiality conclusion holds for every family of $1$-Lipschitz functions. This completes the proof.
\end{proof}

If $X$ and $Y$ are gd-sets satisfying $Y\preceq X$, then
\begin{equation}
\label{eq:measurement-domination-monotonicity}
\M(Y;N,R)\subset\overline{\M(X;N,R)}^{\dpr}.
\end{equation}
Indeed, pull back each function along a domination and apply \Cref{lem:pointwise-closure-sequential} to approximate it pointwise by a sequence of elements of $F_X$. Therefore,
\begin{equation}
\label{eq:associated-pyramid-measurement}
\M(\mathcal P_{\L\circ\D}(X);N,R)=\overline{\M(X;N,R)}^{\dpr}.
\end{equation}

\begin{theorem}[Measurement criterion and metric for a general $\L$]
\label{thm:general-l-ordered-measurement-metric}
For pyramids $\mathcal P,\mathcal Q$ in $\L\circ\D$, define $d_{\Pi}^{\M}(\mathcal P,\mathcal Q)$ by the series in \Cref{eq:measurement-pyramid-metric}, using their gd-set measurements. This defines a metric on their space. The three conditions in \Cref{thm:pyramid-ordered-measurement-criterion} remain equivalent when qm-space pyramids are replaced by pyramids in $\L\circ\D$ and their measurements are interpreted as above.

Moreover, for $X_n,X\in\L\circ\D$, the following are equivalent.
\begin{enumerate}[label=\textup{(\alph*)}]
\item For every positive integer $N$ and every real number $R>0$,
\[\haus{\dpr}(\M(X_n;N,R),\M(X;N,R))\longrightarrow0.\]
\item The associated pyramid $\mathcal P_{\L\circ\D}(X_n)$ converges weakly to $\mathcal P_{\L\circ\D}(X)$.
\item
\[d_{\Pi}^{\M}(\mathcal P_{\L\circ\D}(X_n),\mathcal P_{\L\circ\D}(X))\longrightarrow0.\]
\end{enumerate}
\end{theorem}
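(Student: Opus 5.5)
The plan is to reduce everything to the compactness of the ordered measurement sets (\Cref{lem:l-pyramid-measurement-closed}) and to the unordered criterion of \cite[Propositions~7.2 and~7.3]{gds2}. Two elementary observations organise the reductions.

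\emph{Preliminary observations.} First, on $[-R,R]^N$ with the $\ell^\infty$ distance, every Prokhorov distance is at most $1$. Hence each term of \Cref{eq:measurement-pyramid-metric} is at most $1$, and the series converges.

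Second, for $N'\ge\max\{N,R\}$, the map $\Phi\colon[-N',N']^{N'}\to[-R,R]^N$ that keeps the first $N$ coordinates and applies $b_R$ to each of them is $1$-Lipschitz for $\ell^\infty$. It satisfies
\[\Phi_*\M(\mathcal P;N',N')=\M(\mathcal P;N,R),\]
because $b_R\circ b_{N'}=b_R$ and unused coordinates can be filled with any member of $\overline{F_X}$. Pushforward by a $1$-Lipschitz map is $\dpr$-nonexpansive, so
\[\haus{\dpr}\bigl(\M(\mathcal P;N,R),\M(\mathcal Q;N,R)\bigr)\le\haus{\dpr}\bigl(\M(\mathcal P;N',N'),\M(\mathcal Q;N',N')\bigr).\]
This gives (ii)$\Leftrightarrow$(iii) at once: (ii)$\Rightarrow$(iii) by dominated convergence of the series, and (iii)$\Rightarrow$(ii) because each $(N',N')$-term tends to $0$.

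\emph{Metric property.} Symmetry and the triangle inequality are inherited from the Hausdorff distance. For separation, suppose $d_{\Pi}^{\M}(\mathcal P,\mathcal Q)=0$. All $(N',N')$-measurement sets coincide, so by the second observation all $(N,R)$-measurement sets coincide. Then $\DM(\mathcal P;N,R)=q_{N,R}[\M(\mathcal P;N,R)]$, which follows from \Cref{eq:l-ordered-feature-preimage} and the surjectivity of $q_{N,R}$ onto its image. So the unordered measurement sets of $\mathcal P$ and $\mathcal Q$ coincide as well. The metric $\rho$ of \cite[Proposition~7.3]{gds2} therefore vanishes, which gives $\mathcal P=\mathcal Q$.

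\emph{(i)$\Leftrightarrow$(ii).} For (ii)$\Rightarrow$(i), \Cref{eq:family-measurement-comparison} turns ordered Hausdorff convergence into unordered Hausdorff convergence, and \cite[Proposition~7.2]{gds2} then gives weak convergence.

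For (i)$\Rightarrow$(ii), I will check the two halves of the Hausdorff distance separately.
\begin{itemize}
\item Given $\mu\in\M(\mathcal P;N,R)$, write $\mu\in\M(X;N,R)$ with $X\in\mathcal P$. The inner condition yields $X_n\in\mathcal P_n$ with $\Box(X_n,X)\to0$. Then \Cref{eq:general-box-controls-ordered} gives points of $\M(\mathcal P_n;N,R)$ converging to $\mu$. This pointwise statement has to be made uniform in $\mu$; I will use compactness of $\M(\mathcal P;N,R)$ with a finite $\varepsilon$-net.
\item For the other half, argue by contradiction. Suppose there are $\mu_k\in\M(\mathcal P_{n_k};N,R)$ at $\dpr$-distance at least $\varepsilon$ from $\M(\mathcal P;N,R)$. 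Compactness of $\M(N,R)$ gives a limit $\mu$. By \Cref{lem:ordered-to-feature-measurement}, $q_{N,R}(\mu_k)\to q_{N,R}(\mu)$. The unordered convergence from \cite[Proposition~7.2]{gds2}, together with the closedness of $\DM(\mathcal P;N,R)$ from \cite[Lemma~7.1]{gds2}, places $q_{N,R}(\mu)$ in $\DM(\mathcal P;N,R)$. Then \Cref{eq:l-ordered-feature-preimage} gives $\mu\in\M(\mathcal P;N,R)$, a contradiction.
\end{itemize}
I expect this step to be the main obstacle, because $q_{N,R}$ forgets coordinate order. The preimage identity \Cref{eq:l-ordered-feature-preimage} is exactly what recovers order-sensitive information, so the argument has to route through it rather than through a direct lifting.

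\emph{Associated pyramids.} For the second part, $\mathcal P_{\L\circ\D}(X)$ is a pyramid. It is box-closed by \Cref{prop:domination-box-closedness}, and downward closed and directed trivially. By \Cref{eq:associated-pyramid-measurement}, its measurement sets are the $\dpr$-closures of $\M(X;N,R)$. Hausdorff distances are unchanged by taking closures, so (a) is condition (ii) for the associated pyramids. The equivalences (a)$\Leftrightarrow$(b)$\Leftrightarrow$(c) then follow from the first part.
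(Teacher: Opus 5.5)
Your proof is correct, but its central step differs from the paper's.

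\textbf{The paper's route.} The paper proves one quantitative lemma, \Cref{lem:l-ordered-feature-bilipschitz}: $\frac12 b_{N,R}\le a_{N,R}\le b_{N,R}$, comparing the ordered and unordered Hausdorff distances. From it, \Cref{prop:gds2-finite-measurement-criterion} gives (i)$\Leftrightarrow$(ii)$\Leftrightarrow$(diagonal convergence). Condition (iii) follows by splitting the series. Separation is obtained by applying the criterion to a constant sequence and using the inner condition.

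\textbf{Where you differ.}
\begin{itemize}
\item Your (ii)$\Rightarrow$(i) is the paper's inequality $b\le 2a$.
\item Your (i)$\Rightarrow$(ii) is a soft argument. You use finite nets for the inner half. For the outer half you argue by compactness and contradiction through $q_{N,R}$ and the preimage identity \Cref{eq:l-ordered-feature-preimage}.
\item Your (ii)$\Leftrightarrow$(iii) uses the monotonicity of the Hausdorff terms under the $1$-Lipschitz projection--clipping map $\Phi$, not part (iii) of the gd-set criterion.
\item Your separation argument invokes the metric $\rho$ of \cite[Proposition~7.3]{gds2}. You do not need the measurement sets to coincide for this: \Cref{eq:family-measurement-comparison} gives $\rho=0$ directly.
\end{itemize}

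\textbf{What each route buys.} Your route gives a self-contained reduction of every $(N,R)$-term to the diagonal terms, which is a clean structural fact. The paper's route gives more. The termwise bounds $\frac12 b\le a\le b$ yield $\rho\le d_{\Pi}^{\M}\le 2\rho$, so the new metric is bi-Lipschitz equivalent to the gd-set pyramid metric. Your argument shows only that the two have the same convergent sequences.

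\textbf{A correction to your remark.} You say a direct lifting must be avoided, but the paper lifts directly. Since $\nu$ is itself an ordered measurement of $\mathsf B_N(\nu)$, applying \Cref{eq:general-box-controls-ordered} to $q_{N,R}(\nu)$ and a $\Box$-close $B\in\DM(\mathcal Q;N,R)$ bounds $\dpr(\nu,\M(B;N,R))$. The saturation $\L\circ B$ then lies in $\mathcal Q$, so forgetting coordinate order causes no loss.
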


More generally, let $F\colon\mathcal B\rightleftarrows\mathcal A\colon G$ be a pyramidal adjunction, and suppose that a metric $d_{\Pi}^{\M}$ on the pyramids in $\mathcal A$ metrizes weak convergence. For pyramids in $\mathcal B$, define
\[
d_{\Pi}^{\M}(\mathcal P,\mathcal Q)
\coloneqq d_{\Pi}^{\M}(F_{\#}\mathcal P,F_{\#}\mathcal Q).
\]
This formula extends the notation in \Cref{eq:measurement-pyramid-metric} to the pullback along $F_{\#}$ of the metric on the pyramids in $\mathcal A$. By \Cref{thm:abstract-pyramid-transport}, the map $F_{\#}$ is injective and preserves and reflects weak convergence. Thus, this is a metric on the pyramids in $\mathcal B$ and metrizes their weak convergence. For a finite composable family of pyramidal adjunctions, successive pullback agrees with pullback along the composite by \Cref{prop:pyramidal-adjunction-composition}.

\begin{proposition}[Agreement of measurements on the space side and the transported side]
\label{prop:space-pyramid-measurement-transport}
Let $F\dashv G$ be either $\operatorname{Rep}\dashv\operatorname{Rec}$ or $\operatorname{Rep}^{+}\dashv\operatorname{Rec}^{+}$, let $\mathcal C$ be its domain, and let $\mathcal P$ be a pyramid in $\mathcal C$. If
\[\M(\mathcal P;N,R)\coloneqq\bigcup_{A\in\mathcal P}\M(F(A);N,R),\]
then
\[\M(\mathcal P;N,R)=\M(F_{\#}\mathcal P;N,R),\]
and this set is compact with respect to $\dpr$. Moreover, for every object $A$ in the domain,
\[\M(\mathcal P_{\mathcal C}(A);N,R)=\M(F(A);N,R).\]
\end{proposition}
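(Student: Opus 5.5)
The plan is to reduce both claims to the gd-set level through $\operatorname{Rep}^{+}$ (resp.\ $\operatorname{Rep}$). I will use the description $F_{\#}\mathcal P=\{Z\mid G(Z)\in\mathcal P\}$ together with the fact from \Cref{thm:abstract-pyramid-transport}(1) that $F_{\#}\mathcal P$ is the downward closure of $F[\mathcal P]$ and is a pyramid in $\L\circ\D$ (with $\L=\Lup$ or $\lipone(\R)$). The inclusion $\M(\mathcal P;N,R)\subset\M(F_{\#}\mathcal P;N,R)$ is immediate. Indeed, for $A\in\mathcal P$ the unit isomorphism gives $G(F(A))\simeq A\in\mathcal P$, so $F(A)\in F_{\#}\mathcal P$.

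For the reverse inclusion, I will take $Z\in F_{\#}\mathcal P$ and a measurement $(b_R\circ f_1,\ldots,b_R\circ f_N)_*\mu_Z$ with $f_i\in\overline{F_Z}$. Set $A\coloneqq G(Z)\in\mathcal P$. The counit supplies a domination $Z\preceq F(G(Z))=F(A)$, and its underlying map is the identity of the underlying set. Concretely, $\overline{F_Z}\subset\lipplus(\operatorname{Rec}^{+}(Z))$ by the definition of $d_Z^+$, and similarly $\overline{F_Z}\subset\lipone(\operatorname{Rec}(Z))$ in the symmetric case. The measure is unchanged, so each $f_i$ is already a feature of $F(A)$. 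Hence the measurement lies in $\M(F(A);N,R)\subset\M(\mathcal P;N,R)$ exactly, with no closure needed. This avoids \Cref{eq:measurement-domination-monotonicity}, which only gives inclusion up to $\dpr$-closure.

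Compactness then follows from \Cref{lem:l-pyramid-measurement-closed} applied to the pyramid $F_{\#}\mathcal P$ in $\L\circ\D$. Its hypotheses $\TB\subset\L$ hold for both $\Lup$ and $\lipone(\R)$.

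For the associated pyramid, I will apply \Cref{prop:pyramidal-compactification-pullback}, which gives $F_{\#}\mathcal P_{\mathcal C}(A)=\mathcal P_{\L\circ\D}(F(A))$. By the first part,
\[
\M(\mathcal P_{\mathcal C}(A);N,R)=\M(\mathcal P_{\L\circ\D}(F(A));N,R)=\overline{\M(F(A);N,R)}^{\dpr},
\]
using \Cref{eq:associated-pyramid-measurement}. This requires $\mathcal P_{\mathcal C}(A)$ to be a pyramid. That holds for $\Xp$ by \Cref{thm:qm-pyramid-transport}. For $\X$ it follows by the same closedness argument, using \Cref{prop:domination-box-closedness} and full faithfulness of $\operatorname{Rep}$.

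The main obstacle is the final step: showing that $\M(F(A);N,R)$ is already $\dpr$-closed. I would do this directly by Arzel\`a--Ascoli-type compactness. Given $\nu_k=(b_R\circ f^k)_*\mu_A\to\nu$, I would pass to a subsequence along which, after replacing each $f_i^k$ by the clipped $b_R\circ f_i^k$, the functions converge pointwise via the diagonal argument of \Cref{lem:pointwise-closure-sequential}. This is allowed because the clipped functions are still in $\lipplus$ (resp.\ $\lipone$), since $b_R\in\TB\subset\L$ and $\lipplus(A)$ is $\Lup$-closed, and they are uniformly bounded by $R$. The limit $f_i$ lies in the pointwise-closed family $\lipplus(A)$, and dominated convergence gives $(b_R\circ f)_*\mu_A=\nu$. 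Thus the closure adds nothing, and the last identity follows.
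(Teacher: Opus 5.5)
Your proof is correct, but it is organized differently from the paper's at both ends.

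\textbf{First identity.} You use the explicit counit directly. For $Z\in F_{\#}\mathcal P$ the counit is the identity map, preserves $\mu_Z$, and satisfies $\overline{F_Z}\subset\lipplus(\operatorname{Rec}^{+}(Z))$ (resp.\ $\lipone(\operatorname{Rec}(Z))$). Hence $\M(Z;N,R)\subset\M(FG(Z);N,R)$ with $G(Z)\in\mathcal P$. The paper instead routes this through unordered measurements. It proves $\M(\mathcal P;N,R)=q_{N,R}^{-1}(\DM(F[\mathcal P];N,R))$ using the saturations $\L\circ\mathsf B_N(\nu)$ and the adjunction order. It then identifies $\DM(F[\mathcal P];N,R)$ with $\DM(F_{\#}\mathcal P;N,R)$ via the counit and quotes \eqref{eq:l-ordered-feature-preimage}. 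Your version is shorter and needs no preimage identity. Compactness is obtained the same way in both.

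\textbf{Last identity.} Here the comparison is reversed. The paper observes that a domination $u\colon A\to B$ pulls $\lipplus(B)$ (resp.\ $\lipone(B)$) back into the exact family of $F(A)$. No closure is needed, because that family already consists of all one-sided $1$-Lipschitz functions. Since $u_*\mu_A=\mu_B$, every measurement of $B$ is a measurement of $A$. This gives $\M(\mathcal P_{\mathcal C}(A);N,R)\subset\M(F(A);N,R)$ at once, the reverse inclusion comes from $A\in\mathcal P_{\mathcal C}(A)$, and no pyramid property of $\mathcal P_{\mathcal C}(A)$ is used.

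Your detour through \Cref{prop:pyramidal-compactification-pullback} and \eqref{eq:associated-pyramid-measurement} forces two extra steps. You must verify that $\mathcal P_{\mathcal C}(A)$ is a pyramid, including on the $\X$ side. You must then prove $\dpr$-closedness of $\M(F(A);N,R)$. Your Arzel\`a--Ascoli argument for that closedness is sound: clipping stays in the family, the pointwise limit stays in the pointwise-closed family, and dominated convergence identifies the limit measure. It even yields compactness of single-space measurement sets as a by-product. But your ``main obstacle'' is self-inflicted. The observation from your own second step, that $F(A)$ carries a pointwise-closed family so no closure ever enters, settles the last identity in one line.
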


\begin{proof}
Set $\mathcal E_{\mathcal P}\coloneqq\{F(A)\mid A\in\mathcal P\}$, and write $\L_F\coloneqq\lipone(\R)$ for $F=\operatorname{Rep}$ and $\L_F\coloneqq\Lup$ for $F=\operatorname{Rep}^{+}$. If $A\in\mathcal P$ and $q_{N,R}(\nu)\in\DM(F(A);N,R)$, then the $\L_F$-saturation $\L_F\circ\mathsf B_N(\nu)$ is dominated by $F(A)$ because $b_R\in\TB\subset\L_F$ and $F(A)$ is an $\L_F$-gd-set. Therefore, \Cref{eq:adjunction-order} gives $B\coloneqq G(\L_F\circ\mathsf B_N(\nu))\preceq A$, and hence $B\in\mathcal P$. Thus,
\[\M(\mathcal P;N,R)=q_{N,R}^{-1}(\DM(\mathcal E_{\mathcal P};N,R)).\]
The counit gives $Z\preceq FG(Z)$, and therefore
\[\DM(\mathcal E_{\mathcal P};N,R)=\DM(F_{\#}\mathcal P;N,R).\]
Combining these identities with \Cref{eq:l-ordered-feature-preimage} proves the first assertion.

The transported family $F_{\#}\mathcal P$ is a pyramid, so \Cref{lem:l-pyramid-measurement-closed} shows that the common measurement set is compact. Finally, if $B\preceq A$, then every $1$-Lipschitz or one-sided $1$-Lipschitz function on $B$, according to the category, pulls back along a domination to the corresponding function family of $F(A)$. This gives
\[\M(\mathcal P_{\mathcal C}(A);N,R)\subset\M(F(A);N,R).\]
The reverse inclusion follows from $A\in\mathcal P_{\mathcal C}(A)$.
This completes the proof.
\end{proof}

For an mm-space $X$, we have $F_{\operatorname{Rep}(X)}=\lipone(X)$. Thus, the ordered measurement used here agrees with the classical $(N,R)$-measurement \cite[Definition~5.37]{shioya2016mmg}.

\subsection{Measurement criteria for general \texorpdfstring{$\L$}{L}-pyramids}
\label{subsec:appendix-general-l-local-theory}

Throughout this subsection, $\TB\subset\L\subset\lipone(\R)$ is a monoidal subfamily. We compare ordered measurements with finite measurements that forget the order of the functions.

\begin{proposition}[Criterion for pyramid convergence by finite measurements]
\label{prop:gds2-finite-measurement-criterion}
Let $\mathcal P_n,\mathcal P$ be pyramids in $\L\circ\D$. The following are equivalent \cite[Proposition~7.2]{gds2}.
\begin{enumerate}[label=\textup{(\roman*)}]
\item $\mathcal P_n$ converges weakly to $\mathcal P$.
\item For every positive integer $N$ and every real number $R>0$,
\[\haus{\Box}(\DM(\mathcal P_n;N,R),\DM(\mathcal P;N,R))\longrightarrow0.\]
\item For every positive integer $N$, the convergence in \textup{(ii)} holds with $R=N$.
\end{enumerate}
\end{proposition}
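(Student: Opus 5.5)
The statement to prove is \Cref{prop:gds2-finite-measurement-criterion}. It is cited as \cite[Proposition~7.2]{gds2}, so the plan is mainly to check that our setting matches the hypotheses of that result and to indicate how its proof runs.

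\emph{Matching the setting.} First, our standing assumption $\TB\subset\L$ is exactly the hypothesis there. Second, under this assumption our class $\L\circ\D$ coincides with $\D/\L$ by \cite[Theorem~3.19]{gds2}. Third, our pyramids (\Cref{def:categorical-pyramid}) coincide with the $\L$-pyramids of \cite[Definition~5.1]{gds2}. Fourth, our sets $\DM(\mathcal P;N,R)$ are the unordered $(N,R)$-feature-measurement sets of \cite[Lemma~7.1]{gds2}. This last point needs a short justification. A measurement in their sense picks at most $N$ features from an object dominated by a member of $\mathcal P$ and clips them. Since $\mathcal P$ is downward closed, that object already lies in $\mathcal P$. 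Repeated coordinates are absorbed by $q_{N,R}$, so "at most $N$" features is the same as exactly $N$ coordinates. Finally, weak convergence in \Cref{def:pyramid-weak-convergence} is the weak Hausdorff convergence used there.

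\emph{Sketch of the underlying argument.} If a self-contained argument is wanted, I would proceed in three steps.

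\begin{enumerate}[label=\textup{(\arabic*)}]
\item For (i)$\Rightarrow$(ii), I use that each $\DM(\mathcal P;N,R)$ is compact by \cite[Lemma~7.1]{gds2}.
\begin{enumerate}[label=\textup{(\alph*)}]
\item Take a finite net of $\DM(\mathcal P;N,R)$. Each net point is $q_{N,R}(\nu)$ with $\nu$ coming from some $A\in\mathcal P$. By the inner condition there are $A_n\in\mathcal P_n$ with $A_n\to A$. Then \Cref{eq:general-box-controls-ordered} and \Cref{lem:ordered-to-feature-measurement} give nearby measurements of $\mathcal P_n$.
\item Conversely, suppose some points of $\DM(\mathcal P_n;N,R)$ stay $\varepsilon$-far from $\DM(\mathcal P;N,R)$. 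These points lie in $\DM(N,R)$, which is compact because the underlying measures live on the compact cube $[-R,R]^N$. Extract a convergent subsequence. The limit is dominated by a box limit of elements of $\mathcal P_n$, so the outer condition places it in $\DM(\mathcal P;N,R)$, a contradiction.
\end{enumerate}
\item For (ii)$\Rightarrow$(iii), nothing is needed: (iii) is the special case $R=N$.
\item For (iii)$\Rightarrow$(i), there are three sub-steps.
\begin{enumerate}[label=\textup{(\alph*)}]
\item First reduce to arbitrary $R$ using $R\leq N$. Clipping by $b_R$ is $1$-Lipschitz and belongs to $\TB$. So $\DM(\cdot;N,R)$ is the image of $\DM(\cdot;N,N)$ under a box-nonexpansive map.
\item For the inner condition, fix $A\in\mathcal P$. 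Its $\L$-compactness lets us approximate $A$ in $\Box$ by an $N$-feature clipped measurement. That measurement is close to one in $\DM(\mathcal P_n;N,N)$, which yields an element of $\mathcal P_n$ close to $A$.
\item For the outer condition, suppose $A_k\in\mathcal P_{n(k)}$ and $A_k\to A$. Finite measurements of $A$ are limits of measurements of the $A_k$, so they lie in the closed set $\DM(\mathcal P;N,N)$. Hence each such measurement is dominated by a member of $\mathcal P$. Then $A$ is a box limit of elements of $\mathcal P$, and $A\in\mathcal P$ because $\mathcal P$ is closed.
\end{enumerate}
\end{enumerate}

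\emph{Main obstacle.} The hard step is the approximation in (3)(b) and (3)(c): an object must be box-approximated by its own finite clipped measurements, uniformly enough to pass to the limit. This is where self-compactness and the covering condition of $\L$-compact gd-sets are used. I would take that approximation directly from \cite{gds2} rather than reprove it.
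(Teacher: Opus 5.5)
Your proposal is correct and matches the paper. The paper gives no independent argument here and simply imports the equivalence from \cite[Proposition~7.2]{gds2} under the identifications you verify: $\L\circ\D=\D/\L$ by \cite[Theorem~3.19]{gds2}, pyramids as $\L$-pyramids, $\DM(\mathcal P;N,R)$ as the unordered feature-measurement sets, and weak convergence as weak Hausdorff convergence.

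Your self-contained sketch goes beyond the paper, and one point in steps (3)(b)--(c) needs correcting if you keep it. The objects in $\mathcal P_n$ or $\mathcal P$ that box-approximate $A$ must be the saturations $\L\circ\mathsf B_N(\nu)$, as in the proof of \Cref{lem:l-ordered-feature-bilipschitz}, not the bare measurements $\mathsf B_N(\nu)$, which do not lie in $\L\circ\D$; you also need that passing to $\L$-saturations does not increase $\Box$.
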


\begin{proof}[Proof of \Cref{lem:l-pyramid-measurement-closed}]
The forward inclusion in \Cref{eq:l-ordered-feature-preimage} follows from the definitions. Conversely, suppose that $q_{N,R}(\nu)\in\DM(\mathcal P;N,R)$. For some $X\in\mathcal P$, we have $q_{N,R}(\nu)\in\DM(X;N,R)$. The $\L$-saturation $\L\circ\mathsf B_N(\nu)$ is an $\L$-gd-set, and it is dominated by $X$ because $b_R\in\TB\subset\L$ and $X$ is an $\L$-gd-set. Therefore, it belongs to $\mathcal P$. Monoidal families contain the identity map by definition, so the coordinate functions of $\mathsf B_N(\nu)$ belong to the function family of this saturation. Repeated coordinates may be chosen repeatedly in an ordered measurement. Thus,
\[
\nu\in\M(\L\circ\mathsf B_N(\nu);N,R)
\subset\M(\mathcal P;N,R),
\]
which proves \Cref{eq:l-ordered-feature-preimage}.

The set $\DM(\mathcal P;N,R)$ is compact and hence closed by \cite[Lemma~7.1]{gds2}. The map $q_{N,R}$ is continuous by \Cref{lem:ordered-to-feature-measurement}, so \Cref{eq:l-ordered-feature-preimage} shows that $\M(\mathcal P;N,R)$ is closed. It is compact because $\M(N,R)$ is compact \cite[Lemma~1.17(3) and Definition~5.37]{shioya2016mmg}. This completes the proof.
\end{proof}

\begin{lemma}
\label{lem:l-ordered-feature-bilipschitz}
Let $\mathcal P,\mathcal Q$ be pyramids in $\L\circ\D$. Fix a positive integer $N$ and $R>0$, and set
\[
\begin{aligned}
a_{N,R}&\coloneqq\haus{\dpr}(\M(\mathcal P;N,R),\M(\mathcal Q;N,R)),\\
b_{N,R}&\coloneqq\haus{\Box}(\DM(\mathcal P;N,R),\DM(\mathcal Q;N,R)).
\end{aligned}
\]
Then
\[\frac12b_{N,R}\leq a_{N,R}\leq b_{N,R}.\]
\end{lemma}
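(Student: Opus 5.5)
The lower bound $\tfrac12 b_{N,R}\leq a_{N,R}$ will come directly from \Cref{eq:family-measurement-comparison} in \Cref{lem:ordered-to-feature-measurement}, applied to the families $\mathcal E=\mathcal P$ and $\mathcal E'=\mathcal Q$. The real content is the upper bound $a_{N,R}\leq b_{N,R}$. For this I plan to show that for every $\nu\in\M(\mathcal P;N,R)$ and every $\varepsilon>0$ there is $\nu'\in\M(\mathcal Q;N,R)$ with $\dpr(\nu,\nu')\leq b_{N,R}+\varepsilon$. The symmetric statement then follows by interchanging $\mathcal P$ and $\mathcal Q$.

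First I would reduce to unordered measurements. By \Cref{lem:l-pyramid-measurement-closed} we have $q_{N,R}(\nu)\in\DM(\mathcal P;N,R)$. By the definition of $b_{N,R}$ there is $W\in\DM(\mathcal Q;N,R)$ with
\[
\Box(q_{N,R}(\nu),W)<b_{N,R}+\varepsilon.
\]
Write $W=q_{N,R}(\nu_0)$ with $\nu_0\in\M(\mathcal Q;N,R)$, using \Cref{eq:l-ordered-feature-preimage} for $\mathcal Q$. The point is that $\nu_0$ may carry the wrong ordering or repetition of coordinates. So I must produce an ordered $\nu'$ whose coordinates are matched to those of $\nu$.

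Next I would use the box structure directly. Take a coupling $\pi$ of $\nu$ and $\nu_0$ and a closed $S$ realizing the box value, via \cite[Theorem~5.14]{gds1}. Pass to the $\L$-saturations $\L\circ\mathsf B_N(\nu)$ and $\L\circ\mathsf B_N(\nu_0)$; both lie in the respective pyramids, as in the proof of \Cref{lem:l-pyramid-measurement-closed}. For each coordinate $\pr_i$ on the $\nu$-side, pick $g_i$ in the closed family of $\L\circ\mathsf B_N(\nu_0)$ with
\[
\sup_S|\pr_i\circ\pr_1-g_i\circ\pr_2|\leq\tfrac12(b_{N,R}+\varepsilon).
\]
Replace $g_i$ by $b_R\circ g_i$; clipping is $1$-Lipschitz and $b_R\circ\pr_i=\pr_i$ on $[-R,R]^N$, so the estimate persists. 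Then set
\[
\nu'\coloneqq(b_R\circ g_1,\ldots,b_R\circ g_N)_*\nu_0,
\]
which lies in $\M(\mathcal Q;N,R)$ because the saturation belongs to $\mathcal Q$.

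Finally I would couple $\nu$ and $\nu'$ by pushing $\pi$ forward along $(\operatorname{id},(b_R\circ g_i)_i)$. Off $S$ there is mass at most $b_{N,R}+\varepsilon$. On $S$ the $\ell^\infty$ error is at most $\tfrac12(b_{N,R}+\varepsilon)$. This gives $\dpr(\nu,\nu')\leq b_{N,R}+\varepsilon$, which is why the factor two in the box definition matches the constant $1$.

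The main obstacle I expect is technical: the functions $g_i$ live in the pointwise closure of $\L\circ\{\pr_j\}$ on $\supp\nu_0$, and must be recognized as legitimate features of a member of $\mathcal Q$. I would handle this with the $\L$-gd-set closure property together with \Cref{lem:pointwise-closure-sequential}, approximating if needed and absorbing the error into $\varepsilon$.
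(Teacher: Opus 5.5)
Your proposal is correct and follows essentially the paper's route. The lower bound comes from \Cref{eq:family-measurement-comparison}. The upper bound matches $q_{N,R}(\nu)$ with a nearby $q_{N,R}(\nu_0)\in\DM(\mathcal Q;N,R)$ and uses that $\L\circ\mathsf B_N(\nu_0)\in\mathcal Q$; your coupling-and-pushforward step is an inlined proof of the estimate \Cref{eq:general-box-controls-ordered} that the paper cites at that point.

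The technical obstacle you anticipate does not arise. The closed family of $\mathsf B_N(\nu_0)$ is the finite set $\{\pr_1,\ldots,\pr_N\}$, so each $g_i$ can be taken to be a coordinate projection $\pr_{j(i)}$, and then $\nu'$ lies in $\M(\L\circ\mathsf B_N(\nu_0);N,R)$ without any approximation.
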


\begin{proof}
By \Cref{lem:l-pyramid-measurement-closed}, the ordered measurement sets of $\mathcal P$ and $\mathcal Q$ are compact. The first inequality follows from \Cref{eq:family-measurement-comparison}. To prove the second, take $\nu\in\M(\mathcal P;N,R)$. Choose $X\in\mathcal P$ and $f_1,\ldots,f_N\in F_X$ such that
\[\nu=(b_R\circ f_1,\ldots,b_R\circ f_N)_*\mu_X.\]
Then $q_{N,R}(\nu)\in\DM(\mathcal P;N,R)$. Since $\DM(\mathcal Q;N,R)$ is compact \cite[Lemma~7.1]{gds2}, for every $\varepsilon>0$ there exists $B\in\DM(\mathcal Q;N,R)$ such that
\[\Box(q_{N,R}(\nu),B)\leq b_{N,R}+\varepsilon.\]
The coordinate functions of $\mathsf B_N(\nu)$ belong to its function family, and repeated coordinates may be chosen repeatedly in an ordered measurement. Therefore, $\nu\in\M(q_{N,R}(\nu);N,R)$. By \Cref{lem:general-ordered-measurement-stability},
\[
\dpr\bigl(\nu,\M(B;N,R)\bigr)
\leq\Box(q_{N,R}(\nu),B)\leq b_{N,R}+\varepsilon.
\]
Since $B\in\DM(\mathcal Q;N,R)$, there exists $Y\in\mathcal Q$ such that $B\in\DM(Y;N,R)$. The saturation $\L\circ B$ is an $\L$-gd-set dominated by $Y$ because $b_R\in\TB\subset\L$ and $Y$ is an $\L$-gd-set. Hence $\L\circ B\in\mathcal Q$. Since $\operatorname{id}_{\R}\in\L$,
\[
\M(B;N,R)\subset\M(\L\circ B;N,R)
\subset\M(\mathcal Q;N,R).
\]
It follows that
\[
\dpr\bigl(\nu,\M(\mathcal Q;N,R)\bigr)
\leq b_{N,R}+\varepsilon.
\]
Letting $\varepsilon\downarrow0$, taking the supremum over $\nu$, and interchanging $\mathcal P$ and $\mathcal Q$ gives $a_{N,R}\leq b_{N,R}$. This completes the proof.
\end{proof}

\begin{proof}[Proof of \Cref{thm:general-l-ordered-measurement-metric}]
For each fixed $N$ and $R$, \Cref{lem:l-ordered-feature-bilipschitz} shows that the ordered and unordered Hausdorff distances converge to zero simultaneously. Thus, \Cref{prop:gds2-finite-measurement-criterion} shows that \textup{(i)} and \textup{(ii)} are equivalent, and that they are also equivalent to convergence of all diagonal measurements.

The space $\M(N,N)$ is compact \cite[Lemma~1.17(3) and Definition~5.37]{shioya2016mmg} and $\dpr\leq1$, so the series converges. Condition \textup{(iii)} implies convergence of every diagonal measurement. Conversely, if all diagonal measurements converge, split the series into the terms with $N\leq K$ and those with $N>K$. The first part is a finite sum and tends to zero, while the second part is bounded by
\[\sum_{N>K}\frac{1}{N\cdot2^N}.\]
This proves \textup{(iii)} by letting $K\to\infty$. To prove separation, apply $d_{\Pi}^{\M}(\mathcal P,\mathcal Q)=0$ to a constant sequence and use the inner condition for weak convergence in both directions.

Finally, the Hausdorff distance is unchanged when sets are replaced by their closures. By \Cref{eq:associated-pyramid-measurement}, condition \textup{(a)} is equivalent to convergence of all ordered measurements of the associated pyramids. The equivalence already proved for pyramids shows that this is equivalent to \textup{(b)} and \textup{(c)}.
This completes the proof.
\end{proof}

\subsection{Proofs of the ordered-measurement estimates}
\label{subsec:appendix-ordered-measurement-proofs}

\begin{proof}[Proof of \Cref{lem:ordered-to-feature-measurement}]
\emph{Claim.} Let $F$ be a nonempty family of real-valued functions on $Z$ such that $d_F(x,y)\coloneqq\sup_{f\in F}|f(x)-f(y)|$ is a complete separable metric on $Z$. For Borel probability measures $\mu,\nu$ on $(Z,d_F)$, set
\[
Z_{F,\mu}\coloneqq(\supp\mu,F|_{\supp\mu},\mu),\qquad Z_{F,\nu}\coloneqq(\supp\nu,F|_{\supp\nu},\nu).
\]
Write $d_{\operatorname P}^{d_F}$ for the Prokhorov distance with respect to $d_F$. Then these are gd-sets, and
\[\Box(Z_{F,\mu},Z_{F,\nu})\leq2d_{\operatorname P}^{d_F}(\mu,\nu).\]

The sets $\supp\mu$ and $\supp\nu$ are closed subsets of $(Z,d_F)$, so $Z_{F,\mu}$ and $Z_{F,\nu}$ are gd-sets. Every member of $F$ is $1$-Lipschitz with respect to $d_F$. By separability and a diagonal subsequence argument, every nonempty subset $A\subset Z$ satisfies
\[\overline{F|_A}=\{f|_A\mid f\in\overline F\}.\]
Indeed, \Cref{lem:pointwise-closure-sequential}, applied on $Z$, shows that every $f\in\overline F$ is the pointwise limit of a sequence in $F$. This proves the inclusion from right to left. Conversely, $A$ is separable, so \Cref{lem:pointwise-closure-sequential}, applied on $A$, shows that every $h\in\overline{F|_A}$ is the pointwise limit of a sequence $f_n|_A$ with $f_n\in F$. The values of $f_n$ are bounded at one point of $A$, and the $1$-Lipschitz property yields a subsequence that converges on a countable dense subset of $Z$. The dense-set estimate above shows that this subsequence converges pointwise on $Z$ to the unique $1$-Lipschitz extension $f$ of its limit. Then $f\in\overline F$ and $f|_A=h$, which proves the reverse inclusion.

Let $\varepsilon>d_{\operatorname P}^{d_F}(\mu,\nu)$. By Strassen's theorem, there exists a coupling $\pi$ of $\mu$ and $\nu$ such that
\[
\pi(S)\geq1-\varepsilon,\qquad S\coloneqq\{(x,y)\in\supp\mu\times\supp\nu\mid d_F(x,y)\leq\varepsilon\}.
\]
The set $S$ is closed. For every $f\in\overline F$, we have $\dinf{S}(f\circ\pr_1,f\circ\pr_2)\leq\varepsilon$. Therefore,
\[
\haus{\dinf{S}}(\overline{F|_{\supp\mu}}\circ\pr_1,\overline{F|_{\supp\nu}}\circ\pr_2)\leq\varepsilon.
\]
It follows from \Cref{eq:gds-box} that $\Box(Z_{F,\mu},Z_{F,\nu})\leq2\varepsilon$. Letting $\varepsilon\downarrow d_{\operatorname P}^{d_F}(\mu,\nu)$ proves the estimate. This proves the claim.

Apply the claim with $Z=[-R,R]^N$ and $F=\{\pr_1,\ldots,\pr_N\}$. The induced metric is the $\ell^\infty$ distance, and the first inequality follows.

Since $F_X\subset\overline{F_X}$, the left-hand side of \Cref{eq:ordered-feature-closure} is contained in the right-hand side. For the reverse inclusion, take $f_1,\ldots,f_N\in\overline{F_X}$. By \Cref{lem:pointwise-closure-sequential}, for each $i$ choose a sequence $f_{i,k}\in F_X$ converging pointwise to $f_i$. The integrands are bounded by clipping, so the dominated convergence theorem applied to any bounded continuous function on $[-R,R]^N$ gives the weak convergence
\[
(b_R\circ f_{1,k},\ldots,b_R\circ f_{N,k})_*\mu_X
\longrightarrow
(b_R\circ f_1,\ldots,b_R\circ f_N)_*\mu_X.
\]
Since $\dpr$ metrizes weak convergence on $\M(N,R)$ \cite[Lemma~1.17(1)]{shioya2016mmg}, the first inequality shows that their images converge with respect to $\Box$. This proves \Cref{eq:ordered-feature-closure}. The Hausdorff distance is unchanged when sets are replaced by their closures. Applying the first inequality in both directions therefore gives \Cref{eq:ordered-controls-feature}.

Taking the union of \Cref{eq:ordered-feature-closure} over $Y\in\mathcal E$ and then taking the closure gives \Cref{eq:family-measurement-closure}. The first estimate also gives
\[\haus{\Box}(q_{N,R}[A],q_{N,R}[B])\leq2\haus{\dpr}(A,B)\]
for any two subsets $A,B\subset\M(N,R)$. Set $A=\M(\mathcal E;N,R)$ and $B=\M(\mathcal E';N,R)$. The invariance of the Hausdorff distance under taking closures, together with \Cref{eq:family-measurement-closure}, gives \Cref{eq:family-measurement-comparison}. This completes the proof.
\end{proof}

\begin{proof}[Proof of \Cref{lem:general-ordered-measurement-stability}]
Take $\eta>\dconc(X,Y)$. By \Cref{eq:gds-dconc}, there exists $\pi\in\Tplan(\mu_X,\mu_Y)$ such that
\[\haus{\kf^\pi}(F_X\circ\pr_1,F_Y\circ\pr_2)<\eta.\]
Fix $f_1,\ldots,f_N\in F_X$, and for each $i$ choose $g_i\in F_Y$ such that
\[\kf^\pi(f_i\circ\pr_1,g_i\circ\pr_2)<\eta.\]
Since the clipping map $b_R$ is $1$-Lipschitz, the same inequality holds for $b_R\circ f_i$ and $b_R\circ g_i$. Therefore,
\[\pi\left(\max_{1\leq i\leq N}|b_R\circ f_i\circ\pr_1-b_R\circ g_i\circ\pr_2|>\eta\right)\leq N\eta.\]
\begin{definition*}[Subtransport plans {\cite[Theorem~1.22]{shioya2016mmg}}]
For Borel probability measures $\alpha$ and $\beta$ on a metric space $(Z,d_Z)$, a Borel measure $m$ on $Z\times Z$ is a \emph{subtransport plan} between them if its first and second marginals are Borel measures $\alpha'\leq\alpha$ and $\beta'\leq\beta$, respectively.
\end{definition*}

\begin{definition*}[$\varepsilon$-subtransport plans {\cite[Theorem~1.22]{shioya2016mmg}}]
It is an \emph{$\varepsilon$-subtransport plan} if its support is contained in $\{(z,z')\in Z\times Z\mid d_Z(z,z')\leq\varepsilon\}$.
\end{definition*}

\begin{definition*}[Deficiency {\cite[Theorem~1.22]{shioya2016mmg}}]
Its \emph{deficiency} is $1-m(Z\times Z)$.
\end{definition*}
If $Z$ is complete and separable, then $\dpr(\alpha,\beta)$ is the infimum of the positive $\varepsilon$ for which there exists an $\varepsilon$-subtransport plan between $\alpha$ and $\beta$ with deficiency at most $\varepsilon$.
Push this coupling forward by the two coordinate maps. Restricting the resulting coupling to the set where the $\ell^\infty$ distance is at most $\eta$ gives an $N\eta$-subtransport plan with deficiency at most $N\eta$, so \cite[Theorem~1.22]{shioya2016mmg} gives
\[\dpr((b_R\circ f_1,\ldots,b_R\circ f_N)_*\mu_X,(b_R\circ g_1,\ldots,b_R\circ g_N)_*\mu_Y)\leq N\eta.\]
Interchanging $X$ and $Y$ and letting $\eta\downarrow\dconc(X,Y)$ proves \Cref{eq:general-dconc-controls-ordered}.

Next, let $\eta>\Box(X,Y)$. By \Cref{eq:gds-box}, choose $\pi\in\Tplan(\mu_X,\mu_Y)$ and a closed set $S\subset X\times Y$ such that
\[
1-\pi(S)<\eta,\qquad
2\haus{\dinf{S}}(\overline{F_X}\circ\pr_1,\overline{F_Y}\circ\pr_2)<\eta.
\]
For arbitrary $f_1,\ldots,f_N\in F_X$, there exist $g_1,\ldots,g_N\in\overline{F_Y}$ satisfying
\[
\dinf{S}(f_i\circ\pr_1,g_i\circ\pr_2)<\frac{\eta}{2}
\qquad(i=1,\ldots,N).
\]
Thus, the distance between the two clipped coordinate maps is less than $\eta/2$ on $S$, and restricting their pushed-forward coupling to the set where this distance is at most $\eta/2$ gives an $\eta$-subtransport plan with deficiency less than $\eta$, so \cite[Theorem~1.22]{shioya2016mmg} shows that the Prokhorov distance between the pushforward measures is at most $\eta$. By \Cref{lem:pointwise-closure-sequential}, choose $g_{i,k}\in F_Y$ converging pointwise to $g_i$ for every $i$. The corresponding pushforward measures converge with respect to $\dpr$ by \cite[Lemma~1.17(1)]{shioya2016mmg}. Since the distance to a set equals the distance to its closure, the distance from the first measure to $\M(Y;N,R)$ is at most $\eta$. The reverse direction is identical, and taking the two suprema gives \Cref{eq:general-box-controls-ordered}. This completes the proof.
\end{proof}
\section*{Acknowledgments}

The author would like to thank Professor Takashi Shioya for many helpful suggestions and guidance. The author used Claude and GPT-5.6-series Codex models as AI-assisted tools in preparing this manuscript. The author reviewed and revised the mathematical content and takes full responsibility for the final manuscript.

\end{document}